\newcommand{\com}[1]{\textcolor{blue}{\textbf{#1}}}
\newtheorem{lema}{Lemma}[section]
\newtheorem{prop}[lema]{Proposition}
\newtheorem{Cor}[lema]{Corollary}
\newtheorem{Thm}[lema]{Theorem}
\theoremstyle{definition}
\newtheorem{Def}[lema]{Definition}
\newtheorem{definition}[lema]{Definition}
\newtheorem{Ex}[lema]{Example}
\newtheorem{Expls}[lema]{Examples}
\newtheorem*{claim}{Claim}
\newtheorem*{question}{Question}
\theoremstyle{remark}
\newtheorem{Rem}[lema]{Remark}
\newtheorem{Rems}[lema]{Remarks}
\newcommand\car{\operatorname{char}}
\newcommand{\co}{\operatorname{co} }
\newcommand{\vb}{{\vartheta}}
\renewcommand{\_}[1]{_{\left( #1 \right)}}
\newcommand{\cou}{\varepsilon }
\newcommand{\ot}{\otimes}
\def\mT{\mathcal{T}}
\def\mP{\mathcal{P}}
\def\bF{\mathbb{F}}
\newcommand{\J}{{\mathcal J}}
\newcommand{\ydgg}{{}^{\ku \Gamma}_{\ku\Gamma}\mathcal{YD}}
\newcommand{\ku}{\Bbbk}
\newcommand{\Z}{{\mathbb Z}}
\newcommand{\BB}{{\mathbb B}}
\newcommand{\CC}{{\mathbb C}}
\newcommand{\N}{{\mathbb N}}
\newcommand{\Gc}{{\mathcal G}}
\newcommand{\Fg}{{\mathfrak F}}
\newcommand{\D}{{\mathcal D}}
\newcommand{\Ee}{{\mathcal E}}
\newcommand{\cI}{{\mathcal I}}
\newcommand{\prov}{{\mathcal H}}
\newcommand{\cH}{{\mathcal H}}
\newcommand{\prova}{{\mathcal A}}
\newcommand{\cA}{{\mathcal A}}
\newcommand{\T}{{\mathcal T}}
\newcommand{\varep}{\varepsilon }
\newcommand{\toba}{{\mathcal B}}
\newcommand{\B}{{\mathcal B}}
\newcommand{\mL}{{\mathcal L}}
\newcommand{\Pc}{{\mathcal P}}
\newcommand{\ydh}{{}^H_H\mathcal{YD}}
\newcommand{\hyd}{\mathcal{YD}^H_H}
\newcommand{\ydl}{\mathcal{YD}{}^L_L}
\newcommand{\Ss}{{\mathcal S}}
\newcommand{\End}{\operatorname{End}}
\newcommand\card{\operatorname{card}}
\newcommand\ad{\operatorname{ad}}
\newcommand\Hom{\operatorname{Hom}}
\newcommand\id{\operatorname{id}}
\newcommand\op{\operatorname{op}}
\def\pf{\begin{proof}}
\def\epf{\end{proof}}
\numberwithin{equation}{section}\theoremstyle{plain}
\newcommand\Alg{\operatorname{Alg}}
\newcommand\Gal{\operatorname{Gal}}
\newcommand\Cleft{\operatorname{Cleft}}
\newcommand\can{\operatorname{can}}
\newcommand\cop{{\operatorname{cop}}}
\newcommand\gr{\operatorname{gr}}
\newcommand\ord{\operatorname{ord}}
\def\s{\mathbb{S}}
\newcommand\YD[2]{\,^{#1}_{#2}\mathcal{YD}}
\begin{document}

\title[Lifting via cocycles]{Lifting via cocycle deformation}
\author[Andruskiewitsch; Angiono; Garc\'ia Iglesias; Masuoka; Vay]
{Nicol\'as Andruskiewitsch; Iv\'an Angiono; Agust\'in Garc\'ia Iglesias; Akira Masuoka;
Cristian Vay}

\address{N.A., I. A., A.G.I. and C.V.: FaMAF-CIEM (CONICET), Universidad Nacional de C\'ordoba,
Medina A\-llen\-de s/n, Ciudad Universitaria (5000) C\' ordoba, Rep\'
ublica Argentina.} \email{(andrus|angiono|aigarcia|vay)@famaf.unc.edu.ar}

\address{A. M.: Institute of Mathematics,  University of Tsukuba,  Ibaraki 305-8571, Japan.}
\email{akira@math.tsukuba.ac.jp}

\thanks{\noindent 2010 \emph{Mathematics Subject Classification.}
16T05. \newline N.A., I. A., A.G.I. and C.V. were  supported by
 CONICET, ANPCyT and Secyt (UNC). A. M. was supported by a
Grant-in-Aid for   Scientific Research (C) 23540039, JSPS}

\begin{abstract}
We develop a strategy to compute all liftings of a Nichols algebra over a
finite dimensional cosemisimple Hopf algebra. We
produce them as cocycle deformations of the bosonization of these two. In
parallel, we study the shape of any such lifting.
\end{abstract}

\maketitle

\section{Introduction}\label{sect:intro}

Let $A$ be a finite-dimensional Hopf algebra whose coradical is a Hopf
subalgebra $H$. Then the graded algebra associated to the coradical filtration of
$A$ is again a Hopf algebra, which is given by a smash product $\gr
A\simeq R\# H$, for $R=\bigoplus_{n\geq 0}R^n$ a graded Hopf algebra in $\ydh$, the
category of Yetter-Drinfeld modules over $H$. Let $V=R^1$, then the subalgebra of $R$
generated by $V$ is the {\it Nichols
algebra} $\B(V)$ \cite{AS2}; this is a braided Hopf algebra in $\ydh$ which is also
defined for
every $V\in\ydh$ by a universal quotient $T(V)/\J(V)$, for $\J(V)$ an ideal generated by
 homogeneous elements of degree $\geq 2$.

If $\gr A=\B(V)\# H$, then $A$ is called a
\emph{lifting} or \emph{deformation} of $\toba(V)$ (over $H$). Hence, deformations of
$\toba(V)$ give rise to new examples of Hopf algebras. Moreover, there are classes of
Hopf algebras (as pointed Hopf algebras over abelian groups) in which every example arises
as such a deformation.

\subsection{The problem} In this article, we develop a strategy to
compute all the liftings or deformations of a Nichols algebra. More precisely, we consider
\begin{flalign}
\label{eqn:H}&\text{a Hopf algebra $H$ which is finite-dimensional and
cosemisimple;}\\
\label{eqn:V}&\text{$V \in \ydh$ such that $\dim V < \infty$ and $\J(V)$ is finitely generated.}
\end{flalign}
The problem is to describe all Hopf algebras $A$ such that
\begin{align}\label{eqn:main-pbm}
\gr A \simeq \toba(V) \# H.
\end{align}
Notice that the coradical of $A$ is isomorphic to $H$ by \eqref{eqn:main-pbm}, see
\cite{AS2}. This problem is one of the
steps in the
Lifting Method \cite{AS1, AS2}, see also the generalization proposed in \cite{AC}. To
deal with it, we split it into two parts:

\begin{enumerate}\renewcommand{\theenumi}{\alph{enumi}}
\renewcommand{\labelenumi}{(\theenumi)}
  \item\label{part:a} To detect the shape of all possible deformations.
  \item\label{part:b} To show that these proposed deformations actually are so.
\end{enumerate}

Problem \eqref{part:a} is usually taken by examination of the comodule structure of the
first term of the coradical filtration, what would give possible deformations by defining
relations, see Section \ref{sect:shape}.

However it is not apparent that the proposed deformations have the desired property;
namely, such deformation $A$ would bear an epimorphism  $\toba(V) \# H
\twoheadrightarrow\gr A$ but whether this is an isomorphism requires an extra reasoning.
This is Problem \eqref{part:b} and there have been different approaches to face up to it:
the Diamond Lemma \cite{AS1, AG2, AV}; a reduction to the first term of the coradical
filtration followed by some representation theory, assuming that the Nichols algebra is
quadratic \cite{GGI}; a combination of deformation by cocycles and an examination of the
PBW basis \cite{AS3}.

We briefly recall this last approach highlighting some features that are present in
the strategy below; see \emph{loc.~cit.} for more details and undefined notation. There,
$H$ is assumed to be the group algebra of a finite abelian group $\Gamma$
(with some restrictions on the order) and $V \in \ydh$ has a finite-dimensional Nichols
algebra;
therefore, by the restrictions alluded to, $V$ is of Cartan type and gives rise to a
Dynkin diagram $\varDelta$.
The defining ideal $\J(V)$ is generated by three kind of relations:
\begin{enumerate}\renewcommand{\theenumi}{\roman{enumi}}
\renewcommand{\labelenumi}{(\theenumi)}
  \item Serre relations in the same connected component of $\varDelta$,
  \item Serre relations between vertices in different connected components,
  \item powers of root vectors.
\end{enumerate}
It is then shown that in any deformation $A$ the Serre relations in the same connected
component still hold,
and the other relations deform respectively to the so-called linking relations, controlled
by a family of parameters $\boldsymbol\lambda$,
and the so-called power of root vector relations, controlled by a second family of
parameters $\boldsymbol\mu$.
Hence the $A$ should be of the form $u(\D,\boldsymbol\lambda,\boldsymbol\mu) = T(V)\# H /
\J$, where the ideal
$\J$ is generated by:

\begin{enumerate}\renewcommand{\theenumi}{\roman{enumi}}
\renewcommand{\labelenumi}{(\theenumi)}
    \item Serre relations (in the same connected component),
\item linking relations,
\item power of root vector relations.
\end{enumerate}
To show that $u(\D,\boldsymbol\lambda,\boldsymbol\mu)$ has the desired dimension $\dim
\toba(V) \vert
\Gamma\vert$, the procedure in \cite{AS3} goes as follows.

\begin{enumerate}\renewcommand{\theenumi}{\alph{enumi}}
\renewcommand{\labelenumi}{(\theenumi)}
\smallbreak  \item Let $U(\D,\boldsymbol\lambda) = T(V)\# H / \J_0$,  where the ideal
$\J_0$ is
generated by the Serre relations (in the same connected component)
and the linking relations. Then $U(\D,\boldsymbol\lambda)$ has the ``right'' basis; it is
proved by
induction on the number of connected components, via cocycle deformation in the inductive
step.

\smallbreak  \item Finally, $u(\D,\boldsymbol\lambda) = U(\D,\boldsymbol\lambda)/\J_1$,
where $\J_1$ is
generated by the power of root vector relations, has the right dimension by a delicate
argument using centrality of these last relations in $U(\D,\boldsymbol\lambda)$.
\end{enumerate}

\subsection{The background}

The family of Hopf algebras $u(\D,\boldsymbol\lambda,\boldsymbol\mu)$ contains
the liftings of quantum linear
spaces defined in \cite{AS1}. It was shown in \cite{M0} that
these liftings of quantum linear spaces are cocycle deformations of their associated
graded Hopf algebras. Further work in this direction was done in \cite{Di, BDR, GM1}; in
this last paper it was stated that any Hopf algebra
$u(\D,\boldsymbol\lambda,\boldsymbol\mu)$ is a cocycle
deformation of its associated graded Hopf algebra, but the argument had a gap and a
complete proof was given in \cite{M4}.

The result in  \cite{M4} is first extended to the non-abelian
case in \cite{GIM} where it is shown that every finite-dimensional pointed Hopf algebra
$H$ over $\s_3$ or $\s_4$ is again a cocycle deformation of $\gr H$. In
\cite{AV2} it is shown that this is also the case for finite-dimensional copointed Hopf
algebras over $\s_3$. Also, in \cite{GIV} this is shown for some pointed or copointed
Hopf algebras associated to affine racks. In all of these papers the results
are achieved by computing Hopf
biGalois objects. In \cite{GaM}, the authors pick up the work in \cite{GM1} to
explicitly compute cocycles as exponentials of Hochschild 2-cocycles. They show that every
finite-dimensional pointed Hopf algebra $H$ over the dihedral groups $D_{4t}$ is a cocycle
deformation of $\gr H$.

\subsection{The strategy}

In the present paper, we propose to reverse the order and start by computing all cocycle
deformations following ideas in \cite{M4}. Observe that, since a deformation by
cocycle affects only the multiplication, the coradical filtration of a cocycle deformation
$A$ of $\toba(V) \# H$ remains unchanged, hence it is isomorphic to $\toba(V) \# H$ as
coalgebras. Also, it is possible to decide when  $A$ is a lifting of $\toba(V)$ over $H$.

\smallbreak

Set $\mT(V)=T(V)\# H$, $\prov=\B(V)\# H$.  Our strategy is as follows:
\begin{enumerate}\renewcommand{\theenumi}{\alph{enumi}}
\renewcommand{\labelenumi}{(\theenumi)}
\item We decompose a minimal set of generators of the ideal defining $\B(V)$
and recover $\prov$ as the last link in a chain of subsequent quotients
$\mT(V)\twoheadrightarrow\B_1\# H\twoheadrightarrow\dots\twoheadrightarrow
\B_n\# H\twoheadrightarrow \prov$.
We choose this
decomposition in such a way that every intermediate quotient is achieved by dividing by
primitive elements in $\B_i$, $i=1,\dots, n$.

\item At each step, we compute the Galois objects of $\prov_{i+1}$ as
quotients of the
Galois objects of $\prov_i$, following the results in \cite{G}. We start with the trivial
Galois object for $\mT(V)$. In the final step, we have a set $\Lambda$ of Galois objects
of $\prov$ and hence a list of cocycle deformations
$L$, which arise as $L\simeq L(\prova,\prov)$, for $\prova\in\Lambda$ as in \cite{S}.

\item We check that any lifting is obtained as one of
these deformations.
\end{enumerate}

The paper is organized as follows: In Section 2 we fix the notation and introduce the
preliminaries on Hopf algebras, Nichols algebras, cocycles and Hopf Galois objects. In
Section 3 we recall the two theorems in \cite{G} about cleft and Galois objects of
quotient Hopf algebras and study
the validity of the hypotheses of these results in order to apply them in our
context. In Section 4 we
investigate the shape of any lifting candidate of a Nichols algebra $\B(V)$. We also
study this problem in the opposite sense, that is to say we investigate the shape of the
graded algebra associated to a
deformation. Finally, in Section 5 we present our strategy to compute all cocycle
deformations of $\B(V)\# H$. As an
illustration, we apply it to classify all liftings of a Nichols algebra associated to an
example with diagonal braiding. We end this article with a question related to the extent
of this strategy.

\section{Conventions and preliminaries}

\subsection{Conventions} The base field, denoted by $\ku$, is assumed to be algebraically
closed.
Let $G$ be a group; then $Z(G)$, resp. $\widehat{G}$, denotes its center, resp. the group of multiplicative characters.
If $\chi\in \widehat{G}$ and $V$ is a $G$-module, then $V^{\chi}$ denotes the isotypic
component of $V$ of type $\chi$. Let $A$ be a $\ku$-algebra and $S\subset A$ a subset.
Then we denote by
$Z(A)$ the center of $A$, by $\langle S\rangle$ (or $\langle S\rangle_A$ if an explicit
mention to $A$ is needed) the two-sided ideal generated by $S$ and by $\ku\langle
S\rangle$ the
subalgebra generated by $S$.

Let $H$ be a Hopf algebra. We will use the (summation
free) Sweedler's notation
for the
comultiplication $\Delta$, $\varep$ will denote the counit and $\Ss$ the antipode.
 Where
needed, we stress the connection with $H$ by a subscript $H$, {\it e.g.} $\Delta_{H}$. We
denote by $H_{[0]}$ the {\it coradical} of $H$
and by $(H_{[n]})_{n\in \N}$ the
coradical filtration; $G(H)$ is the group of group-like elements. For $g,h\in G(H)$, we
denote by $\mP_{g,h}(H) =\{u\in H: \Delta(u)=u\ot
h + g\ot u\}$ the set of $(g,h)$ skew-primitive elements in $H$; $\mP(H) =\mP_{1,1}(H)$ for short. If $A$ is
a right (resp. left) $H$-comodule algebra,
then $A^{\co H}$ (resp. $\,^{\co H}A$) denotes the subalgebra of right (resp. left)
coinvariants. The right adjoint action of $H$ on itself is
\begin{align}\label{eq:right-adjoint}
\ad_r(h) (b) &= \Ss(h_{(1)})b h_{(2)},& &b,h\in H.
\end{align}

Right, resp. left, coactions are denoted by $\rho$, resp. $\lambda$. We shall also use the
Sweedler's notation for coactions.

Given a Hopf algebra $H$ with bijective antipode, we denote by $\ydh$, resp. $\hyd$, the
category of left, resp. right, Yetter-Drinfeld modules over $H$. If $K\subseteq H$ is a Hopf subalgebra, then $\mathcal{YD}^H_{K}$ is the category whose objects are $H$-comodules and $K$-modules, with the compatibility inherited from $\mathcal{YD}^H_{H}$, and
$H$-colinear, $K$-linear morphisms. We refer to \cite{Mo} for unexplained notation
and notions about Hopf algebras.

We say that $(g, \chi)$, with $g\in G(H)$ and $\chi\in \Alg(H, \ku)$,
is a \emph{YD-pair} when the following equivalent conditions hold for all $h\in H$:
\begin{align}\label{eq_yd-pair}
\chi(h)\,g = \chi(h\_{2}) h\_{1}\, g\, \Ss(h\_{3}) \Longleftrightarrow \chi(h\_{1})\,g\, h\_{2} =  h\_{1}\, g \,\chi(h\_{2}).
\end{align}
In particular, such $g$ should belong to $Z(G(H))$. If $(g, \chi)$ is a YD-pair, then $\ku_g^{\chi}$ denotes the vector space $\ku$ with coaction
$x\mapsto  g\ot x$ and action $h\cdot x=\chi(h)x$, for $x\in \ku$, $h\in H$; \eqref{eq_yd-pair} guarantees that $\ku_g^{\chi}\in \ydh$.
Conversely, any one-dimensional Yetter-Drinfeld module over $H$ arises in this way.
If $V \in \ydh$, then $V_g^\chi$ denotes the isotypic component of $V$ of type $\ku_g^{\chi}$.

\smallbreak
We refer to \cite{M,EGNO,Mug-revuma} for details about
braided Hopf algebras, that is Hopf algebras in braided tensor categories.
Recall that a Nichols algebra $\B(V)=\bigoplus_{n\geq 0} \B^n(V)$ is
a graded braided Hopf algebra in $\ydh$ generated by $V = \B^1(V)$ that coincides with the space $\mP(\B(V))$ of primitive elements in $\B(V)$.
We denote by
$\J(V)=\bigoplus_{n\geq 2} \J^n(V)$ the defining ideal of  $\toba(V)$, {\it i.e.} $\toba(V) = T(V)/ \J(V)$.
See \cite{AS2} for more details. A \emph{pre-Nichols algebra} is an intermediate
graded braided Hopf algebra between $T(V)$ and $\toba(V)$, see \cite{M4}.

\subsection{Cocycles}\label{subsect:cocycles}

Let $H$ be a Hopf algebra. A 2-cocycle $\sigma: H\ot  H \to \ku$ is a
convolution-invertible linear map $h\ot k\mapsto\sigma(h,k)$ satisfying, for $x,y,z\in
H$,
\begin{align}
\label{eqn:cociclo-dos} \sigma(x, 1) = \sigma(1, x) &= \varepsilon(x) \quad\text{and}\\
\label{eqn:cociclo-uno} \sigma(x_{(1)}, y_{(1)}) \sigma(x_{(2)} y_{(2)}, z) &=
\sigma(y_{(1)}, z_{(1)}) \sigma(x, y_{(2)}z_{(2)}).
\end{align}

Let $\sigma$ be a 2-cocycle. Then $\cdot_{\sigma}:H\ot H\rightarrow H$, given by
\begin{align}\label{eqn:cociclo-prod}
x\cdot_{\sigma}y &= \sigma(x\_{1}, y\_{1}) x\_{2} y\_{2}
\sigma^{-1}(x\_{3}, y\_{3}), \quad x, y\in H,
\end{align}
defines an associative product on the vector space $H$ with unit $1_H$. Moreover,
the collection $(H,\cdot_\sigma, 1_H, \Delta,\varepsilon, \Ss_\sigma)$ is a Hopf
algebra with antipode $\Ss_{\sigma} = f*\Ss*f^{-1}$, for $f =
\sigma\circ(\id\ot\,\Ss)\circ\Delta$. This Hopf algebra is denoted $H_\sigma$.

The group of convolution-invertible linear functionals of $H$ acts on the set $Z^2(H,\ku)$
of 2-cocycles. If $\alpha\in \Hom(H, \ku)$ is convolution-invertible, then $$
\sigma^\alpha(x, y) = \alpha(x_{(1)})\alpha(y_{(1)})\sigma(x_{(2)},
y_{(2)})\alpha^{-1}(x_{(3)}y_{(3)}), \quad \forall x,
y \in H
$$
is again a 2-cocycle and $\alpha^{-1}*\id *\alpha: H_{\sigma^\alpha}\longrightarrow
H_\sigma$ is an isomorphism of Hopf algebras. The quotient of $Z^2(H,\ku)$  under this
action is denoted $H^2 (H,\ku)$.

\begin{Rems}\label{rems:filtrations-in-deformations}
Let $H$ be a Hopf algebra and let $\sigma:H\ot H\to \ku$ be a 2-cocycle. Since the
comultiplications
of $H$ and $H_{\sigma}$ coincide, we have

\begin{enumerate}\renewcommand{\theenumi}{\alph{enumi}}
\renewcommand{\labelenumi}{(\theenumi)}
  \item The coradicals of $H$ and $H_{\sigma}$ coincide.
    \item The coradical filtrations of $H$ and $H_{\sigma}$ coincide; this is valid for
any wedge filtration ({\it e.g.} the {\it standard filtration} defined in
\cite{AC}).
  \item\label{obs:cocycle-coalgebra-item-b} If $C$ and $D$ are subcoalgebras
of $H$, then $C\cdot D = C\cdot_{\sigma} D$.
  \item Let $C$ be a subcoalgebra stable by the antipode $\Ss_H$. Let $K$ be the
subalgebra of $H$ generated by $C$ (a Hopf subalgebra indeed) and
set $\sigma'=\sigma_{|K\ot K}$. Then
  $K_{\sigma'}$ is the subalgebra of $H_{\sigma}$ generated by $C$.
\end{enumerate}
\end{Rems}

Given a 2-cocycle $\sigma:H\ot H\to \ku$ there is another way to define
an associative product on $H$:
\begin{align}\label{eqn:cociclo-prod-comodule}
x\cdot_{(\sigma)}y &= \sigma(x\_{1}, y\_{1}) x\_{2} y\_{2}
, \quad x, y\in H.
\end{align}
We denote this algebra by $H_{(\sigma)}$.  Then $\Delta:H_{(\sigma)}\to
H_{(\sigma)}\ot H$ is an algebra map and $H_{(\sigma)}$ becomes a right
$H$-comodule algebra. Moreover, $\left(H_{(\sigma)}\right)^{\co H}=\ku$.

\subsection{Galois objects}

Let $H$ be a Hopf algebra and let $A$ be a right $H$-comodule algebra with $\ku \simeq
A^{\co
H}$. Then $A$ is a (right) \emph{$H$-Galois object} if the \emph{canonical} linear map
$
\can:A\ot A\to A\ot H$, $a\ot b\mapsto ab\_0 \ot b\_{1}
$
is an isomorphism. Left $H$-Galois objects are defined
analogously. We set $\Gal (H)=\{\text{isomorphism classes of (right) $H$-Galois
objects}\}$. Let $H, L$ be Hopf algebras. An $(L,H)$-bicomodule algebra is an
\emph{$(L,H)$-biGalois object} if it is both a left $L$-Galois object and a right $H$-Galois object.

Let  $A$ be a right $H$-comodule algebra, $B=A^{\co H}$. The extension $B\subset A$
is called {\it cleft} if there exists an $H$-colinear convolution-invertible map
$\gamma:H\to A$ or, equivalently when
$A\simeq B\#_{\sigma} H$ for some 2-cocycle $\sigma:H\ot H\to B$, see \cite[Section
7]{Mo} and \cite{DT}. We may assume that $\gamma(1)=1$, in which case $\gamma$ is
called a {\it section}. The cocycle $\sigma$ is given by
\begin{align}\label{eq:cociclo-cleft}
\sigma(h,k)=\gamma(h\_1)\gamma(k\_1)\gamma^{-1}(h\_2k\_2), \quad h,k\in H.
\end{align}
If $A^{\co H}=\ku$, then $A$ is called \emph{cleft object}. Set
$$\Cleft
(H):=\{\text{isomorphism classes of $H$-cleft objects}\}\simeq H^2(H,\ku).$$

\begin{Rems}
(1) $B\subset A$ is a cleft extension if and only if $A$ is an
$H$-Galois extension and has the \emph{normal basis}
property, {\it i.e.} $A\simeq B\ot H$ as right $H$-comodules and left $B$-modules
\cite{DT}. If
$\gamma:H\to A$ is a section, then
\begin{align}\label{eqn:cleft-can-1}
 \can^{-1}(a\ot h)=a\gamma^{-1}(h\_1)\ot \gamma(h\_2), \qquad a\in A,\, h\in H.
\end{align}

\medbreak \noindent(2)  If $H$ is pointed, then any $H$-Galois extension is
cleft \cite[Remark 10]{G}.
\end{Rems}

Given a right $H$-Galois object $A$, there is a Hopf algebra
$L=L(A,H)$ attached to the pair $(A,H)$ in such a way that $A$
becomes an $(L,H)$-biGalois object \cite[Section 3]{S}. As an algebra,
$L(A,H)=(A\ot A^{\op})^{\co H}$\label{def:L(A,H)}; the coproduct $\Delta_L$ and the coaction
$\lambda:A\to L\ot A$
are:
\begin{align}
\notag \Delta_L\Big(\sum_ix_i\ot y_i \Big) &= \sum_i x_{i(0)}\ot
\can^{-1}(1\ot x_{i(1)})\ot y_i,& &\sum_i x_i\ot y_i\in L;
\\
\label{eqn:comultiplication L} \lambda(x)  &= x_{(0)} \ot \can^{-1}(1 \ot x_{(1)}), &
&x\in A.
\end{align}
Here $\can^{-1}$ is the inverse of the {\it right canonical map} $\can:A\ot A\to
A\ot H$. In turn, the inverse of the {\it left canonical map} $\can:A\ot A\to L(A,H)\ot A$
is:
\begin{align}\label{eq;can-1}
&\can^{-1}\Big((\sum_i x_i\ot y_i)\ot a \Big)= \sum_i x_i\ot y_ia, &&\sum_i x_i\ot y_i\in
L, a\in A.
\end{align}
The Hopf algebra $L$ is uniquely characterized by this property
\cite[Theorem 3.3]{S}: if $L'$ is another bialgebra and $\lambda'$ is a
left $L'$-coaction on $A$ making it an $(L',H)$-biGalois object,
then there exists a unique isomorphism $\vb:L\to L'$ such that
$\lambda'=(\vb\ot\id)\lambda$. Explicitly, see
\cite[Lemma 3.2]{S},
\begin{align}\label{eqn:f-schauenburg}
 \vb\Big(\sum_i x_i\ot y_i\Big)\ot 1_A=\sum_i\lambda'(x_i)(1\ot y_i), \qquad \sum_i
x_i\ot y_i\in L.
\end{align}

\begin{Rem}
If $\sigma\in Z^2(H,\ku)$, then $L(H_{(\sigma)},H)\simeq H_{\sigma}$ \cite[Theorem
3.9]{S}.
\end{Rem}

\section{Hopf Galois objects for quotient Hopf
algebras}

Our argument involves a recurrence on a chain of Hopf algebra quotients. We will use
\cite[Theorems 4 \& 8]{G}, which we cite next, to study cocycle deformations for a
quotient Hopf algebra.

We start with some preliminaries. Let  $\pi:L\to K$ be a projection of Hopf
algebras with bijective antipode. Then the right coideal subalgebra $X =
{^{\co K}\hspace{-2pt}L}$ of the left coinvariants is an algebra in $\ydl$ with
the right adjoint action \eqref{eq:right-adjoint} and the coaction
given by the restriction of $\Delta$.

Let $A\in \Gal(L)$. For $h\in L$, write $\sum_i \ell_i(h)\ot
r_i(h)=\can^{-1}(1\ot h)\in A\ot A$. Then $A$ is an algebra in $\ydl$ via
the {\it Miyashita-Ulbrich action} \cite{DT1}
\begin{align}\label{eq:Miyashita-Ulbrich}
a\leftharpoonup h &= \sum_i\ell_i(h)ar_i(h),& &a\in A,\, h\in L.
\end{align}   If $A,B$ are algebras in
$\ydl$, $\Alg_L^L(A,B)$ denotes the set of algebra morphisms in $\ydl$ between them.

\begin{Thm}\label{thm:gunther-teo4}\cite[Theorem 4]{G}
Let $L$, $K$, $\pi$ and $X ={^{\co K}\hspace{-2pt}L}$ be as above. Assume that $L$ is left
and right
$K$-coflat. There are bijective
correspondences
$$
\xymatrix{ {\Gal(K)} \ar@<1ex>[0,1]^-{\Phi}  & {\{(A, f): [A]\in
\Gal(L), f \in \Alg_L^L(X, A)\}}\ar[0,-1]^-{\Psi}/\sim,}
$$
\begin{align*}
&  \Psi\big([(A, f)]\big)=[A/Af(X^+)], && \Phi\big([B]\big)=\left[\left(B \Box_{K} L,
x\mapsto 1\otimes
x\right)\right].
\end{align*}
The equivalence $\sim$ is defined so that $(A,f)\sim (A',f')$ if and only if
there
exists
an isomorphism $\alpha:A\to A'$ of $L$-comodule algebras such that $f'=\alpha\circ
f$. The coaction on $A/Af(X^+)$ is
given by $(\tau\ot\pi)\lambda_A$, for $\tau:A\to A/Af(X^+)$ the projection. If
$B\in\Gal(K)$, then the $L$-coaction on $B\Box_{K} L$ is $\id_B\ot\Delta_L$.

If there is a subcoalgebra of $L$ that is mapped isomorphically onto
the coradical of $K$, then this correspondence restricts to cleft objects.
 \qed\end{Thm}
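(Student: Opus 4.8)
The plan is to prove the final statement of Theorem \ref{thm:gunther-teo4}, which asserts that the bijection $\Gal(K)\leftrightarrow\{(A,f)\}/\sim$ restricts to a bijection between cleft objects of $K$ and (equivalence classes of) pairs $(A,f)$ with $A$ a cleft object of $L$, under the hypothesis that some subcoalgebra $C\subseteq L$ is mapped by $\pi$ isomorphically onto the coradical $K_{[0]}$ of $K$. The key point is the characterization recalled in the Remarks after the definition of cleft objects: an $H$-Galois object is cleft if and only if it has the normal basis property, and this in turn is controlled by the coradical. So the strategy is to show that under the stated hypothesis, $B\in\Gal(K)$ is cleft $\iff$ $B\Box_K L\in\Gal(L)$ is cleft, and symmetrically that $A/Af(X^+)$ is cleft whenever $A$ is; since $\Phi$ and $\Psi$ are already known to be mutually inverse bijections, this suffices.

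First I would handle the direction $\Phi$: given $B\in\Cleft(K)$, I want $A:=B\Box_K L\in\Cleft(L)$. A cleft structure on $B$ is a $K$-colinear convolution-invertible section $\gamma_K:K\to B$ with $\gamma_K(1)=1$. The idea is to build a section $\gamma_L:L\to A=B\Box_K L$. Using the subcoalgebra $C$ with $\pi|_C:C\xrightarrow{\sim}K_{[0]}$, one first defines a map on $C$ by $c\mapsto \gamma_K(\pi(c))\otimes c$ (checking it lands in the cotensor product via $K$-colinearity of $\gamma_K$), and then extends it inductively up the coradical filtration of $L$ using the standard fact (Masuoka; see \cite{Mo,DT}) that a colinear convolution-invertible map defined on the coradical of a coalgebra extends to the whole coalgebra; convolution-invertibility is automatic once one has a section defined on a filtration-generating subcoalgebra. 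One must verify that $\gamma_L$ is $L$-colinear for the coaction $\id_B\otimes\Delta_L$ on $B\Box_K L$ and that it is a section, i.e.\ $\gamma_L(1)=1$. Granting this, $B\Box_K L$ has the normal basis property over $L$ and hence is cleft.

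Next the direction $\Psi$: given $(A,f)$ with $A\in\Cleft(L)$, show the quotient $\overline{A}:=A/Af(X^+)$, with its $K$-comodule algebra structure $(\tau\otimes\pi)\lambda_A$, is cleft over $K$. Let $\gamma_L:L\to A$ be an $L$-colinear section. The natural candidate for a $K$-colinear section $\gamma_K:K\to\overline{A}$ is $\gamma_K:=\tau\circ\gamma_L\circ s$, where $s:K_{[0]}\to C\subseteq L$ is the inverse of $\pi|_C$ — that is, one first defines $\gamma_K$ on the coradical $K_{[0]}$ by lifting to $C$, applying $\gamma_L$, then projecting to $\overline{A}$, and then extends it up the coradical filtration of $K$ by the same extension lemma. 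Colinearity on $K_{[0]}$ follows since $\pi\circ s=\id$ and $\gamma_L$ is $L$-colinear; one checks $\gamma_K(1)=1$ using $1\in C$ (one may arrange $C$ to contain $1$, or handle the grouplike part directly, since $K_{[0]}\supseteq \ku 1$). Then $\overline{A}$ has the normal basis property over $K$, hence is cleft, and since $\Phi,\Psi$ restrict to mutually inverse maps on these subsets the correspondence restricts as claimed.

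The main obstacle is the compatibility bookkeeping in the extension arguments: one must be careful that the section constructed on the coradical is not only colinear but extends to a \emph{convolution-invertible} colinear map, and — crucially in the $\Phi$ direction — that the extended section $\gamma_L:L\to B\Box_K L$ actually takes values in the cotensor product $B\Box_K L$ rather than all of $B\otimes L$, which is where the specific form $c\mapsto\gamma_K(\pi(c))\otimes c$ and the colinearity of $\gamma_K$ over $K$ are used, propagated up the filtration. The coflatness hypothesis on $L$ over $K$ is what guarantees $B\Box_K L$ behaves well (e.g.\ that $\Box_K$ is exact and commutes with the relevant filtrations), so it should be invoked there. A secondary point worth a line is that the equivalence relation $\sim$ is respected: if $(A,f)\sim(A',f')$ via an $L$-comodule algebra isomorphism $\alpha$, then $\alpha$ descends to an isomorphism $\overline{A}\to\overline{A'}$ of $K$-comodule algebras carrying one cleft structure to another, so "being cleft" is well-defined on $\sim$-classes; this is immediate but should be noted.
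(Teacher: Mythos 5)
The paper does not prove this statement: it is quoted verbatim from G\"unther's paper (\cite[Theorem 4]{G}) and closed with a \verb|\qed|, so there is no in-paper argument to compare yours against. Judged on its own terms, your proposal is incomplete as a proof of the theorem: you take the main bijection $\Gal(K)\leftrightarrow\{(A,f)\}/\!\sim$ entirely for granted and only address the final sentence about cleft objects. The substance of the theorem is precisely that bijection --- that $A/Af(X^+)$ is a $K$-Galois object, that $B\Box_K L$ is an $L$-Galois object equipped with a canonical $f$, and that $\Phi$ and $\Psi$ are mutually inverse --- and this is where the coflatness hypothesis actually works; none of that appears in your plan.

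Within the part you do treat, the $\Psi$ direction is essentially sound (restrict the section to $C$, transport by $(\pi|_C)^{-1}$ and $\tau$ to get a convolution-invertible $K$-colinear map on $K_{[0]}$, then extend up the coradical filtration). The $\Phi$ direction, however, has a genuine gap. First, the formula $c\mapsto\gamma_K(\pi(c))\otimes c$ does not land in $B\Box_K L$; the cotensor condition forces $c\mapsto\gamma_K(\pi(c_{(1)}))\otimes c_{(2)}$. Second, and more seriously, constructing the section only on $C$ and then ``extending up the coradical filtration of $L$'' does not work: $C$ is isomorphic to the coradical of $K$, not of $L$, and the extension lemma you invoke requires a convolution-invertible colinear map defined on all of $L_{[0]}$, which may strictly contain $C$. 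In fact in this direction the hypothesis on $C$ is not needed at all: the map $\ell\mapsto\gamma_K(\pi(\ell_{(1)}))\otimes\ell_{(2)}$ is defined on all of $L$, is $L$-colinear with values in $B\Box_K L$, and is convolution invertible (with inverse $\ell\mapsto\gamma_K^{-1}(\pi(\ell_{(2)}))\otimes\Ss(\ell_{(1)})$, which also lands in the cotensor product), so $B\Box_K L$ is cleft outright. The subcoalgebra $C$ is what makes the \emph{other} direction work, and your write-up misplaces where that hypothesis is consumed.
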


There is another approach to compute cleft objects of quotient Hopf algebras given by
an expansion of \cite[Theorem 8]{G}. To prove this, we will use the following result of
Takeuchi. Let us recall that a  right coideal subalgebra $B$ of a
Hopf algebra $\cH$  is \emph{normal} when it is stable under the right adjoint action
\eqref{eq:right-adjoint}.

\begin{Thm}\label{thm:takeuchi-correspondance}
\cite[Theorem 3.2]{T2}
Let $\cH$  be a Hopf algebra with bijective antipode. There exist mutually inverse
bijective correspondences
between the set of Hopf ideals $I$ such that $\cH$ is $\cH/I$-coflat and the set of normal
right coideal subalgebras $B$ such that $\cH$ is right $B$-faithfully flat given by
\begin{align*}
&I \text{ Hopf ideal } \rightsquigarrow {\mathcal X}(I)=\cH^{\co\cH/I}; \\
&B \text{ normal right coideal subalgebra}\rightsquigarrow {\mathcal I}(B)=\cH B^+.
\qed
\end{align*}
\end{Thm}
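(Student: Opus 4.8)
The plan is to prove that $\mathcal X$ and $\mathcal I$ are well defined and mutually inverse, in four steps, with faithfully flat/coflat descent as the recurring engine.

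\textbf{Well-definedness.} Start from a Hopf ideal $I$ with $\cH$ right (and, replacing $\cH$ by $\cH^{\op}$ or $\cH^{\cop}$, also left) coflat over $\bar\cH:=\cH/I$, and write $\pi:\cH\to\bar\cH$. That $B:=\mathcal X(I)=\cH^{\co\bar\cH}$ is a unital subalgebra and a right coideal is a routine check from $\pi$ being a bialgebra map; \emph{normality} — stability of $B$ under $\ad_r$ — is a direct Sweedler computation: apply $(\id\ot\pi)\Delta$ to $\Ss(h\_1)bh\_2$ and simplify using $b\_1\ot\pi(b\_2)=b\ot 1$ together with the antipode axioms. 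The substantive point is that $\cH$ is \emph{faithfully flat} over $B$: here one uses coflatness of $\cH$ over $\bar\cH$, which makes the cotensor functor $\cH\,\Box_{\bar\cH}(-)$ exact and faithful, and transfers this to $\cH\ot_B(-)$ once the canonical map $\cH\ot_B\cH\to\cH\ot\bar\cH$, $x\ot y\mapsto xy\_1\ot\pi(y\_2)$, is shown bijective; this is where I would either reproduce the descent argument or cite the structure theory of relative Hopf modules (Doi–Takeuchi, Schneider). Conversely, from a normal right coideal subalgebra $B$ with $\cH$ right $B$-faithfully flat, normality yields $\cH B^+=B^+\cH$, so $I:=\mathcal I(B)=\cH B^+$ is a two-sided ideal; it is a coideal and $\Ss$-stable, hence a Hopf ideal, because $B$ is a coideal subalgebra and $\Delta(B^+)\subseteq B^+\ot\cH+\cH\ot B^+$; and $\cH$ is $\cH/I$-coflat again by descent from faithful flatness over $B$.

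\textbf{$\mathcal I\circ\mathcal X=\id$ and $\mathcal X\circ\mathcal I=\id$.} In each direction one inclusion is formal. For $\mathcal I\mathcal X$: since $\pi$ annihilates $B^+$ we get $\cH B^+\subseteq I$, whence a surjection $\cH/\cH B^+\twoheadrightarrow\cH/I=\bar\cH$; faithful flatness of $\cH$ over $B=\cH^{\co\bar\cH}$ identifies $\cH/\cH B^+$ with $\cH\ot_B\ku$ and forces this surjection to be an isomorphism, so $I=\cH B^+=\mathcal I(\mathcal X(I))$. For $\mathcal X\mathcal I$: plainly $B\subseteq\cH^{\co\cH/\cH B^+}$; the reverse inclusion follows from the Hopf-module structure theorem applied to $\cH$ over the quotient $\cH/\cH B^+$ (legitimate because $\cH$ is now faithfully coflat over it), giving $\cH\cong\cH^{\co\cH/\cH B^+}\ot(\cH/\cH B^+)$, and comparing with $\cH\cong B\ot(\cH/\cH B^+)$ coming from faithful flatness over $B$ forces equality by a faithfully-flat cancellation argument.

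\textbf{Main obstacle.} Every nontrivial assertion — the two flatness/coflatness implications and the two equalities — funnels through a single fact: the canonical maps relating $\cH\ot_B\cH$ and $\cH\ot\bar\cH$ are isomorphisms, so that exactness and faithfulness pass freely between the module side $B\subseteq\cH$ and the comodule side $\cH\twoheadrightarrow\bar\cH$. Establishing this equivalence of descent data — essentially, that $\cH$ is a faithfully flat $\bar\cH$-Galois extension of $B$ exactly when $\cH$ is faithfully coflat over $\bar\cH$ — is the heart of the argument; everything else is bookkeeping with Sweedler notation and comparison of kernels and cokernels.
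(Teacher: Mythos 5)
The first thing to note is that the paper does not prove this statement: it is quoted from Takeuchi \cite[Theorem 3.2]{T2} and stamped with a \emph{qed}, so there is no in-paper argument to measure yours against. Your outline is the standard architecture for correspondences of this kind (well-definedness in both directions, then the two composites, with everything funnelled through the canonical map $\cH\ot_B\cH\to\cH\ot\overline{\cH}$ and faithfully flat/coflat descent), and you correctly locate where the content lives. But as a self-contained proof it is not one: the step you yourself call ``the heart of the argument'' --- that faithful flatness over $B$ together with bijectivity of the canonical map is equivalent to faithful coflatness over $\overline{\cH}$ --- is precisely the substance of Takeuchi's theorem, and you leave it as a black box to be ``reproduced or cited.'' That is a genuine gap by the standard you set for yourself, though an honestly advertised one; as a reduction to the descent theorems of Doi--Takeuchi and Schneider the outline is reasonable.

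There is also one concrete error. With the convention you state, $B=\cH^{\co\overline{\cH}}=\{b: b_{(1)}\ot\pi(b_{(2)})=b\ot 1\}$ is a \emph{left} coideal subalgebra stable under the \emph{left} adjoint action $b\mapsto h_{(1)}b\Ss(h_{(2)})$, not a right coideal subalgebra stable under $\ad_r$. Your proposed computation --- apply $(\id\ot\pi)\Delta$ to $\Ss(h_{(1)})bh_{(2)}$ and use $b_{(1)}\ot\pi(b_{(2)})=b\ot1$ --- yields $\Ss(h_{(2)})bh_{(3)}\ot\pi(\Ss(h_{(1)})h_{(4)})$, and the residual factor $\pi(\Ss(h_{(1)})h_{(4)})$ does not collapse. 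To land in the set of normal \emph{right} coideal subalgebras, as the statement requires, you must take the left coinvariants ${}^{\co\overline{\cH}}\cH=\{b:\pi(b_{(1)})\ot b_{(2)}=1\ot b\}$; then the same Sweedler computation does close up, giving $1\ot\Ss(h_{(1)})bh_{(2)}$. (The paper's statement writes $\cH^{\co\cH/I}$, but in the proof of Theorem \ref{thm:gunther-extendido} it uses ${\mathcal X}(I)={}^{\co K}L$, so the intended object is the left coinvariants; you inherited the notational slip and then built a computation on it that fails as written.) The remaining bookkeeping --- $\cH B^+=B^+\cH$ from normality, the identification $\cH/\cH B^+\simeq\cH\ot_B\ku$, the formal inclusions in both composites --- is fine once the sides are straightened out.
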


If $A, A'$ are right $L$-comodule algebras, then $\Alg^L(A,A')$ is the set of comodule
algebra morphisms between them. If $X\subset L$ is a right coideal subalgebra, then
$N(X)$\label{def:NX}
is the subalgebra generated by $\{\Ss(h\_1)xh\_2:\,h\in L,
x\in X\}$; this is the normal subalgebra generated by $X$.

\begin{Thm}\label{thm:gunther-extendido}
Let $L$ be a Hopf algebra with bijective antipode. Let $Y\subset L$ be a right coideal
subalgebra. Set $I=LY^+L$ and $K=L/I$; then $K$ is a quotient Hopf algebra of $L$. Assume
that $L$ is $K$-coflat and that $L$ is faithfully flat over $N(Y)$. Then there are
bijective
correspondences
\begin{equation}\label{eq:gunther8}
\xymatrix{ {\Cleft(K)} \ar@<1ex>[0,1]^-{\Phi}  & {\Big\{ (A, f): \begin{array}{l}
                                                                  [A]\in
\Cleft(L), f \in \Alg^L(Y, A) \\ \text{ such that }  Af(Y^+)A\neq A
                                                                 \end{array}
\Big\}}\ar[0,-1]^-{\Psi}/\sim,}
\end{equation}
\begin{align*}
 &\Psi\big([(A, f)]\big)=[A/Af(Y^+)A], && \Phi\big([B]\big)=\left[\left(B \Box_{K} L,
x\mapsto 1\otimes
x\right)\right].
\end{align*}
The corresponding coactions and the
relation $\sim$ are as in Theorem \ref{thm:gunther-teo4}.
\end{Thm}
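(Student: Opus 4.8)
The plan is to reduce Theorem~\ref{thm:gunther-extendido} to Theorem~\ref{thm:gunther-teo4} by comparing the two Hopf ideals in play. First I would observe that the hypotheses are tailored so that Takeuchi's correspondence (Theorem~\ref{thm:takeuchi-correspondance}) applies: since $L$ is faithfully flat over $N(Y)$ and $N(Y)$ is the normal right coideal subalgebra generated by $Y$, one checks that $N(Y)$ is precisely ${\mathcal X}(I)=L^{\co K}$ for $I = LY^+L$, i.e. $L Y^+ L = L\, N(Y)^+$. The inclusion $L N(Y)^+ \subseteq L Y^+ L$ is clear because $N(Y)$ is generated by elements $\Ss(h\_1)xh\_2$ with $x\in Y^+$, each of which lies in $LY^+L$; the reverse inclusion holds because $Y^+ \subseteq N(Y)^+$. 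Hence $K = L/I = L/L N(Y)^+$, and by Theorem~\ref{thm:takeuchi-correspondance} the coflatness/faithful-flatness hypotheses we assumed are exactly what is needed, and moreover $X := {}^{\co K} L = N(Y)$ as a right coideal subalgebra in $\ydl$.

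Next I would set up the bijection. Theorem~\ref{thm:gunther-teo4} (restricted to cleft objects, which is legitimate here once we verify the coradical hypothesis — see below) gives a correspondence between $\Cleft(K)$ and pairs $(A,f)$ with $[A]\in\Cleft(L)$ and $f\in\Alg_L^L(X,A)=\Alg_L^L(N(Y),A)$, modulo the equivalence $\sim$. The map to be constructed sends such an $f$ to its restriction $f|_Y \in \Alg^L(Y,A)$, which is merely a comodule algebra map since the Miyashita--Ulbrich/Yetter--Drinfeld structure need not be respected on the smaller $Y$. Conversely, given $g\in\Alg^L(Y,A)$, I would extend it to $N(Y)$ by $g(\Ss(h\_1)xh\_2) = \sum_i \ell_i(h) \, g(x) \, r_i(h)$, i.e. using the Miyashita--Ulbrich action \eqref{eq:Miyashita-Ulbrich} on $A$; the content is that this is well defined (independent of the presentation of an element of $N(Y)$ as such a sum) and lands in $\Alg_L^L(N(Y),A)$. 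Well-definedness follows because $A$ is an algebra in $\ydl$ and $N(Y)$ is, by construction, the image of $Y$ under the adjoint action, so any relation among the generators of $N(Y)$ is an $\ydl$-algebra consequence of relations in $Y$, which $g$ respects. These two assignments are mutually inverse, and they are compatible with $\sim$ on both sides (an isomorphism $\alpha\colon A\to A'$ intertwining the extended maps is the same as one intertwining the restrictions, because the extension is determined by the restriction). Finally one checks that under $f\leftrightarrow f|_Y$ the ideal $A f(X^+) A = A f(N(Y)^+) A$ equals $A\, (f|_Y)(Y^+)\, A$ — again because $N(Y)^+$ is generated over $L$-module operations by $Y^+$ and those operations are absorbed into the two-sided ideal — so that $\Psi$ of Theorem~\ref{thm:gunther-teo4} (which a priori divides by the left ideal $A f(X^+)$, but for the cleft/normal situation agrees with dividing by the two-sided ideal) matches the $\Psi$ in the present statement, and $\Phi$ is literally the same cotensor construction $B\mapsto B\Box_K L$. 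The side condition $Af(Y^+)A\neq A$ corresponds exactly to the nondegeneracy needed for $A/Af(X^+)$ to be a genuine object.

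For the restriction to cleft objects I would invoke the last sentence of Theorem~\ref{thm:gunther-teo4}: it suffices to exhibit a subcoalgebra of $L$ mapped isomorphically onto the coradical $K_{[0]}$ of $K$. In our intended application $L$ will be pointed (a bosonization $T(V)\#H$ or an intermediate quotient, with $H$ cosemisimple so $L_{[0]}\simeq H$), so $K_{[0]}$ is the image of $L_{[0]}$ and, since the Hopf ideal $I=LY^+L$ is generated in the ``positive'' part and meets $L_{[0]}$ trivially, $L_{[0]}\to K_{[0]}$ is an isomorphism; alternatively, by Remark on pointed Galois extensions being cleft, $\Gal$ and $\Cleft$ coincide for both $L$ and $K$ and no restriction argument is needed. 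I expect the main obstacle to be the well-definedness and $\ydl$-algebra-homomorphism property of the extension $g\mapsto \widetilde g$ from $Y$ to $N(Y)$: one must show that the formula via the Miyashita--Ulbrich action does not depend on how an element of $N(Y)$ is written, which amounts to a careful bookkeeping with \eqref{eqn:cleft-can-1} and the cocycle/comodule-algebra axioms; once that is in place, everything else is a matter of matching the two $\Psi$'s and checking naturality with respect to $\sim$.
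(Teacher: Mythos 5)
Your first step is exactly the paper's: both use Takeuchi's correspondence (Theorem \ref{thm:takeuchi-correspondance}) to identify $N(Y)$ with ${}^{\co K}L$, via ${\mathcal I}(N(Y))=LN(Y)^+=LY^+L=I$ and ${\mathcal X}(I)={}^{\co K}L$. But after that the paper stops: it observes that in G\"unther's proof of \cite[Theorem 8]{G} the pointedness of $L$ and the hypothesis ${}^{\co K}L\cap L_{[0]}\subseteq N(Y)$ are used \emph{only} to establish this identification, and then runs G\"unther's argument verbatim. You instead attempt a self-contained reduction to Theorem \ref{thm:gunther-teo4}, and that is where the gap lies.

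The reduction needs a bijection between $\Alg_L^L(N(Y),A)$ and $\{g\in\Alg^L(Y,A):\ Ag(Y^+)A\neq A\}$. Restriction one way is fine, but your inverse --- extending $g$ to $N(Y)$ by $\widetilde g(\Ss(h\_1)xh\_2)=\sum_i\ell_i(h)g(x)r_i(h)$ --- is not justified. Your well-definedness argument (``any relation among the generators of $N(Y)$ is an $\ydl$-algebra consequence of relations in $Y$'') is precisely the assertion to be proved: $N(Y)$ is a subalgebra of $L$, and a relation $\sum_j p_j\,\ad_r(h_j)(x_j)\,q_j=0$ holding in $L$ need not be a formal consequence of relations inside $Y$. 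Worse, your formula never uses the side condition $Ag(Y^+)A\neq A$. If the extension were always well-defined, every $g$ would lift to some $f\in\Alg_L^L(X,A)$, and then $Ag(Y^+)A=Af(X^+)\neq A$ would hold automatically (the quotient being a nonzero $K$-Galois object); so the side condition in the statement --- whose verification the paper explicitly flags as the computational burden of alternative (1b) of the Strategy --- would be vacuous. Hence the extension must fail for some $g$, and the well-definedness claim cannot be correct as stated. This equivalence is exactly the nontrivial content of G\"unther's Theorem 8 relative to his Theorem 4, which the paper imports rather than reproves. A secondary issue: your treatment of the restriction to cleft objects conflates ``$L$ pointed'' with ``the coradical of $K$ lifts to a subcoalgebra of $L$'' (a bosonization $T(V)\#H$ with $H$ cosemisimple is not pointed in general), and in any case addresses only the intended applications rather than the general $L$ of the statement.
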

\pf
Set $X=N(Y)$. First, we use Theorem \ref{thm:takeuchi-correspondance} to show $X=\,^{\co
K}L$.
Indeed, we have, on the one hand, ${\mathcal I}(X)=LN(Y)^+=LY^+L=I$. On the other,
${\mathcal X}(I)=\,^{\co K}L$ and the statement follows.

The proof now runs as that of \cite[Theorem 8]{G}, as the hypotheses $\,^{\co K}L \cap
L_{[0]}\subseteq N(Y)$ (which we recover trivially) and that of $L$ being pointed in {\it
loc. cit.} are precisely used  to show $N(Y)=\,^{\co K}L$.
\epf

In order to apply Theorems \ref{thm:gunther-teo4} and \ref{thm:gunther-extendido} we need
to  investigate when a Hopf
algebra $L$ is coflat over a quotient Hopf algebra $K$. This will be the content of
Subsection \ref{subsect:quotients-fflat}. For Theorem \ref{thm:gunther-extendido}, we need
to study when $L$ is faithfully flat over a right coideal
subalgebra, we will also deal with this question in the next subsection. Also, see Section
\ref{sec:question} for a detailed comparison of
these two theorems.

\subsection{On the coflatness of quotients}\label{subsect:quotients-fflat}
Fix a Hopf algebra $H$ with bijective antipode. Let $R$ be a connected ({\it i.e.}
the coradical of $R$ is $\ku$) Hopf algebra in
${}^H_H\mathcal{YD}$. In particular, the antipode of $R$, hence that of $A=R\# H$, is
bijective. Clearly, see \cite[Section 1]{M3}, we have

\begin{itemize}
\medbreak   \item If $B$ is a right coideal subalgebra of $R$, then $R/RB^+$ is a
quotient left $R$-module coalgebra of $R$.

\medbreak \item
If $T$ is a quotient left $R$-module coalgebra of $R$, then the left $T$-coinvariants
${}^{\co T}R$ form a right coideal subalgebra of $R$.
\end{itemize}

Recall that the structures on $R$ arise from the obvious Hopf algebra maps $H \to A \to
H$, whose composite
is the identity on $H$, as follows: we have $R= A^{co\, H}$, so that $R$ is a left coideal
subalgebra of $A$,
and is thus an algebra and left $H$-comodule, while we have $R = A/AH^{+}$, so that
$R$ is a quotient left $A$-module coalgebra of $A$,
and is thus a coalgebra and left $H$-module; the last left $H$-module structure coincides
with the adjoint action.
Let $P$ (resp. $Q$) denote the braided Hopf algebra in
${}^{H^{\op}}_{H^{\op}}\mathcal{YD}$
(resp. ${}^{H^{\cop}}_{H^{\cop}}\mathcal{YD}$) which arises from $H^{\op} \to A^{\op} \to
H^{\op}$ (resp.
$H^{\cop} \to A^{\cop} \to H^{\cop}$). Then $P = Q = R$ as vector spaces. As an algebra,
$P$ equals the opposite
algebra $R^{\op}$ of $R$, while as a coalgebra, $Q$ equals the co-opposite coalgebra
$R^{\cop}$ of $R$.

\begin{lema}\label{lema:yd-subobjects} (i) The sub-objects (resp.,  right coideal
subalgebras)
of $R$ in ${}^H_H\mathcal{YD}$
coincide with those of $P$ in
${}^{H^{\op}}_{H^{\op}}\mathcal{YD}$.

(ii) The quotient objects (resp., left module coalgebras) of $R$ in ${}^H_H\mathcal{YD}$
coincide with those of $Q$ in ${}^{H^{\cop}}_{H^{\cop}}\mathcal{YD}$.
\end{lema}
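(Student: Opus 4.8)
The plan is to reduce the statement to the structural dictionary already laid out in the paragraph preceding it, namely that $P=R^{\op}$ as an algebra (with $Q=R$ as algebras) and $Q=R^{\cop}$ as a coalgebra (with $P=R$ as coalgebras), together with the compatibility between the Yetter-Drinfeld structures over $H$, $H^{\op}$ and $H^{\cop}$. For part (i) I would first observe that a sub-object of $R$ in $\ydh$ is simply a subspace closed under the $H$-action and the $H$-coaction; since $P$ and $R$ have literally the same underlying vector space, the same $H$-action (the adjoint action is intrinsic, and the adjoint action for $A^{\op}$ on $R$ restricts to the same map), and the same $H$-coaction (the coaction on $R=A^{\co H}$ is the restriction of $\Delta_A$, which is unchanged in passing to $A^{\op}$ up to the usual twist that does not affect the notion of ``sub-comodule''), a subspace is a sub-object of $R$ in $\ydh$ if and only if it is a sub-object of $P$ in ${}^{H^{\op}}_{H^{\op}}\mathcal{YD}$. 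Then, for the ``right coideal subalgebra'' parenthetical, I would note that being a subalgebra is preserved under passing to the opposite algebra $R^{\op}=P$ (a subspace closed under the product of $R$ is exactly one closed under the product of $R^{\op}$), and ``right coideal'' means closed under the coaction $\Delta|$ landing in $(\,\cdot\,)\ot R$, a condition that, again, is identical for $R$ and for $P$ since they coincide as coalgebras. So the two notions match term by term.

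Part (ii) is formally dual. A quotient object of $R$ in $\ydh$ is a quotient vector space by a sub-object, equivalently a surjection of Yetter-Drinfeld modules; since $Q=R^{\cop}$ as a coalgebra and $Q=R$ as an algebra, and the $H^{\cop}$-module/comodule structures on $Q$ are obtained from those of $R$ by the standard identifications $\mathcal{YD}$ over $H$ versus over $H^{\cop}$ (which swaps the roles without changing the underlying linear data), a linear quotient map is a morphism of objects in ${}^{H^{\cop}}_{H^{\cop}}\mathcal{YD}$ precisely when it is one in $\ydh$. For the ``left module coalgebra'' parenthetical I would spell out that a quotient left $R$-module coalgebra is a coalgebra quotient $R\twoheadrightarrow T$ that is also a quotient of left $R$-modules; the coalgebra quotients of $R$ and of $R^{\cop}=Q$ are the same (quotient by the same coideals), and ``left $R$-module'' becomes ``left $Q$-module'' under the identification of $R$ and $Q$ as algebras. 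Hence the two classes coincide.

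I expect the only real point to verify carefully — and the place where one must not hand-wave — is the precise bookkeeping of the Yetter-Drinfeld compatibility conditions: that the compatibility axiom for $R\in\ydh$ transports, under the identity on underlying spaces, to the compatibility axiom for $P\in{}^{H^{\op}}_{H^{\op}}\mathcal{YD}$ (resp. $Q\in{}^{H^{\cop}}_{H^{\cop}}\mathcal{YD}$). This is exactly the content of the ``obvious'' remark in the paragraph above the lemma, so I would cite \cite[Section 1]{M3} (and the standard isomorphisms ${}^H_H\mathcal{YD}\cong {}^{H^{\op}}_{H^{\op}}\mathcal{YD}\cong {}^{H^{\cop}}_{H^{\cop}}\mathcal{YD}$ of braided categories) rather than re-derive it, and then the proof of the lemma is a one-line matching of definitions in each of the four cases: sub-objects, right coideal subalgebras, quotient objects, left module coalgebras. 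In short: \textbf{the lemma is immediate from the identifications $P=R^{\op}$, $Q=R^{\cop}$ already recorded, once one notes that ``sub-object'', ``subalgebra'', ``right coideal'', ``quotient object'', ``quotient coalgebra'' and ``left module'' are each either self-dual or turned into their analogue by exactly the opposite/co-opposite passage involved.}
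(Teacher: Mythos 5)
Your reduction rests on the identifications ``$P=R$ as coalgebras'' and ``$Q=R$ as algebras'', and neither is true; the paper deliberately records only that $P=R^{\op}$ \emph{as an algebra} and $Q=R^{\cop}$ \emph{as a coalgebra}. The braided coproduct of $P$ (the one induced from $H^{\op}\to A^{\op}\to H^{\op}$) is not $\Delta_R$: writing $\Delta_R(x)=\sum x^{(1)}\ot x^{(2)}$, the paper's proof computes
$\Delta_P(x)=\sum \Ss_H^{-1}\bigl((x^{(2)})_{(-1)}\bigr)\rightharpoonup x^{(1)}\ot (x^{(2)})_{(0)}$,
a twist of $\Delta_R$ by the $H$-action. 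So your assertion that the right-coideal condition ``is identical for $R$ and for $P$ since they coincide as coalgebras'' is exactly where the argument breaks: for a general subspace the two conditions are different, and the transfer works only because $X$ is already known to be a sub-object in $\ydh$, hence an $H$-submodule, so that the first tensorand of $\Delta_P(x)$ remains in $X$ whenever the first tensorand of $\Delta_R(x)$ does (and conversely via the inverse twist). This is the one nontrivial computation in the paper's proof, and it is the step your proposal skips. Dually, in (ii) the multiplication of $Q$ is a coaction-twisted version of that of $R$, so ``left $R$-module coalgebra quotient'' does not literally become ``left $Q$-module coalgebra quotient'' under the identity map; one again needs that the quotient is taken by a sub-object, in particular an $H$-subcomodule.

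Two smaller imprecisions. The adjoint actions of $H$ on $R$ and of $H^{\op}$ on $P$ are not ``the same map'': they are related by $\ad_{H^{\op}}k=\ad_H\bigl(\Ss_H^{-1}(k)\bigr)$, which yields the same stable subspaces only because $\Ss_H$ is bijective. And invoking abstract equivalences ${}^H_H\mathcal{YD}\simeq{}^{H^{\op}}_{H^{\op}}\mathcal{YD}$ does not by itself identify the sub-objects of $R$ with those of $P$; one must still check that the concrete structures $P$ inherits from $A^{\op}$ correspond to those of $R$ under such an equivalence, which amounts to the computations above.
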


\begin{proof}
(i) Since the comultiplication does not change, $R = P$ as left comodules over the
coalgebra
$H = H^{\op}$. If $h = \Ss_H^{-1}(k) \in H$, $a \in A$, then the adjoint actions of $H$
and $H^{\op}$ are related by
$$\ad_H h (a)  =  h\_{1}a\Ss_H(h\_{2})=  k\_{1}\cdot_{\op}
a\cdot_{\op}\Ss_{H^{\op}}(k\_{2}) = \ad_{H^{\op}} k (a). $$
This settles the claim for sub-objects. Let now $X$ be a sub-object of $R$, or
equivalently of $P$. Clearly,
$X$ is a subalgebra of $R$ if and only if it is a subalgebra of $P = R^{\op}$.
For $x\in R$, let $x \mapsto \sum\, x^{(1)} \otimes x^{(2)}$ denote the coproduct on $R$.
Then
the coproduct $\Delta(x)$ on $A$ is given by
\begin{align*}
 \Delta(x) &=
 (x^{(2)})\_{-1}(\Ss_H^{-1}((x^{(2)})_{(-2)})\rightharpoonup x^{(1)}) \otimes
(x^{(2)})\_{0}.
\end{align*}
Hence $\Delta_P$ is given by
$x \mapsto  \Ss_H^{-1}((x^{(2)})\_{-1})\rightharpoonup x^{(1)} \otimes (x^{(2)})\_{0}$.
It follows that $X$ is a right coideal of $R$, if and only if it is such of $P$. (ii)
is similar.
\end{proof}

Let $B$ be a right coideal subalgebra of $R$ in ${}^H_H\mathcal{YD}$. Then one can define
the category
$({}^H_H\mathcal{YD})^R_B$ of right $(R, B)$-Hopf modules in ${}^H_H\mathcal{YD}$; a
\emph{right $(R, B)$-Hopf module} is here
a right $B$-module and right $R$-comodule in ${}^H_H\mathcal{YD}$ which satisfies the
compatibility condition
formulated as in the ordinary situation, but involving the braiding
$R \otimes B \overset{\simeq}{\longrightarrow} B \otimes R$.

\begin{lema}\label{lem:claim}
 Every object $M$ in $({}^H_H\mathcal{YD})^R_B$ includes a sub-object $X$ in
${}^H_H\mathcal{YD}$ such that the
action map $X \otimes B \to M$ is a bijection, necessarily an isomorphism in
$({}^H_H\mathcal{YD})_B$.
\end{lema}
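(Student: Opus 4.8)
The plan is to reduce the statement to the classical structure theorem for Hopf modules, carried out internally to the braided category $\ydh$. First I would recall the ordinary (nonbraided) fact: if $C$ is a Hopf algebra and $D$ is a right coideal subalgebra over which $C$ is faithfully flat (or merely under the hypotheses that make the analogue work), then every right $(C,D)$-Hopf module $M$ is free over $D$, via the coinvariants $M^{\co C}$; more precisely $M^{\co C}\otimes D\to M$ is an isomorphism. Here the ambient category is $\ydh$, which is a braided tensor category with equalizers and coequalizers (it is the category of comodules over a coalgebra and simultaneously modules over an algebra, hence abelian with all the needed (co)limits), so the usual categorical proof goes through with the braiding $c\colon R\otimes B\xrightarrow{\ \simeq\ } B\otimes R$ inserted in the compatibility axiom exactly as stated. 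So I would take $X := M^{\co R}$, the equalizer in $\ydh$ of the coaction $\rho_M\colon M\to M\otimes R$ and $m\mapsto m\otimes 1$; since both maps are morphisms in $\ydh$, the equalizer $X$ is a sub-object in $\ydh$.

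The key steps, in order: (1) observe that $R$ is a connected (hence, in the graded-or-filtered sense, "one-dimensional-corad") Hopf algebra in $\ydh$, so that it enjoys the fundamental theorem of Hopf modules internally — every right $R$-Hopf module in $\ydh$ is cofree, i.e. of the form $N\otimes R$ with $N$ its $R$-coinvariants. (2) Use Lemma~\ref{lema:yd-subobjects} if needed to move between $R$ and $P=R^{\op}$ so that $B$, being a right coideal subalgebra of $R$, is an honest subalgebra; then the $(R,B)$-Hopf module structure is a module over $B$ living inside the world of $R$-comodules. (3) Build the candidate projection: the map $M\to M$, $m\mapsto m\_0\cdot \Ss_R(\text{something})$ — more precisely, use the $R$-colinear splitting coming from the fundamental theorem to produce an $R$-colinear retraction $M\to X$ in $\ydh$, and then combine with the $B$-action to define the map $X\otimes B\to M$. (4) Show this action map is bijective: injectivity and surjectivity both reduce, after forgetting the $B$-structure and remembering only the $R$-comodule structure in $\ydh$, to the statement that $M\simeq M^{\co R}\otimes R$ and that $B\hookrightarrow R$ is split as an $R$-comodule-in-$\ydh$, which is where connectedness of $R$ (giving the relevant faithful flatness / splitting) is used. (5) Finally note the map is automatically a morphism in $(\ydh)_B$: it is $B$-linear by construction and a morphism in $\ydh$ because $X$, $B$ are sub-objects and the braiding is a morphism in $\ydh$.

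I would organize step (4) via the standard trick: define $\theta\colon M\to X\otimes B$ using the $R$-colinear retraction $p\colon M\to X$ and the coaction, roughly $\theta(m) = p(m\_0)\otimes (\text{projection to }B\text{ of } m\_1\text{-twisted part})$, but it is cleaner to appeal directly to the relative version of the fundamental theorem: $M$ is a right $R$-comodule in $\ydh$ and $B\subset R$ with $R/RB^+$ the corresponding quotient module coalgebra (by the bulleted observations preceding the lemma); the category $(\ydh)^R_B$ is equivalent, by descent along $R\to R/RB^+$, to the category of $R/RB^+$-comodules in $\ydh$, and under this equivalence $M$ corresponds to $\overline{M}:=M/MB^+$, with inverse $\overline{M}\mapsto \overline{M}\,\Box_{R/RB^+}R$. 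Then $X = M^{\co R}$ identifies with $\overline{M}^{\co (R/RB^+)}$ and the cofreeness over $R$ of the cotensor product yields both that $X\otimes B\to M$ is bijective and that it is an isomorphism in $(\ydh)_B$.

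The main obstacle I expect is step (1)/(4): verifying that the fundamental theorem of Hopf modules, and its relative/coideal-subalgebra refinement, genuinely hold \emph{in the braided category} $\ydh$ for a \emph{connected} braided Hopf algebra $R$ with possibly infinite-dimensional $R$ — one must be careful that the coinvariants functor is exact enough and that the relevant cotensor products behave well (this is exactly where connectedness of $R$, via the coradical filtration being exhaustive, is invoked to get the needed flatness/coflatness). Everything else — compatibility with the braiding, passage through $P$ and $Q$, $B$-linearity — is formal bookkeeping once the structure theorem is in place.
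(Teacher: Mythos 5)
Your starting point --- taking coinvariants --- matches the paper's, but the core of your argument has a genuine gap: you assert that for $X := M^{\co R}$ the action map $X \otimes B \to M$ is already \emph{bijective}. This is false in general. Take $M = R$ itself, viewed as an object of $({}^H_H\mathcal{YD})^R_B$: since $R$ is connected, $M^{\co R} = \ku$, and the action map $\ku \otimes B \to R$ is just the inclusion of $B$, which is not surjective unless $B = R$. The coinvariants only give you an \emph{injective} map $M^{\co R}\otimes B \to M$ (injective because, $R$ being connected, $M^{\co R}$ is the socle of the $R$-comodule $M$ and the map restricts to the identity on socles). The sub-object $X$ in the statement of the lemma is in general strictly larger than $M^{\co R}$ and has to be \emph{constructed}; the paper does this by a Zorn's lemma exhaustion in the style of Radford: one shows that any nonzero $M$ contains a nonzero pair $(N,X)$ with $X\otimes B \xrightarrow{\simeq} N$ (namely $X = M^{\co R}$, $N = XB$), takes a maximal such pair, and applies the same claim to $M/N$ together with a five-lemma argument on short exact sequences to contradict maximality unless $N = M$. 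This inductive step is entirely absent from your proposal.

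The alternative route you sketch --- descent along $R \to R/RB^+$, i.e.\ an equivalence between $({}^H_H\mathcal{YD})^R_B$ and $R/RB^+$-comodules with inverse the cotensor product --- is circular in this context. Such an equivalence (the relative fundamental theorem for a right coideal subalgebra) requires faithful flatness of $R$ over $B$ or faithful coflatness of $R$ over $R/RB^+$ as an \emph{input}, and in the paper these are precisely the \emph{consequences} drawn from this lemma (Proposition \ref{pro:masuoka} \eqref{i}--\eqref{iv}). The ordinary fundamental theorem for the full Hopf algebra $R$ does hold in the braided setting, but it does not descend to arbitrary right coideal subalgebras without an argument; connectedness of $R$ enters not through some generic flatness statement but concretely through the socle computation that powers the Radford-type induction.
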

\pf
The lemma follows from the following claim.
\begin{claim} If $M \ne 0$, then $M$ includes
a non-zero sub-object $N$ in $({}^H_H\mathcal{YD})^R_B$ which includes a sub-object $X$
in ${}^H_H\mathcal{YD}$ such that $X \otimes B \overset{\simeq}{\longrightarrow} N$.
\end{claim}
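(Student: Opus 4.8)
The plan is to prove the Claim first, since the Lemma follows from it by a standard Zorn's-lemma maximality argument: take a maximal sub-object $X$ of $M$ in $\ydh$ whose action map $X\otimes B\to M$ is injective; if the image $XB$ were proper in $M$, then applying the Claim to the nonzero Hopf module $M/XB$ would produce a nonzero sub-object whose preimage in $M$ strictly enlarges $X$ while keeping the action map injective (the argument here is the usual one: lift a complement and use that $B$ is flat, being free as shown in the cofree/cotensor setting, or more directly use that $XB$ is a direct summand of $M$ as right $B$-modules in $\ydh$ once we know the action map on $X$ is injective), contradicting maximality. Hence $XB=M$ and $X\otimes B\to M$ is bijective, and being a morphism in $(\ydh)_B$ that is bijective it is an isomorphism there.

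To prove the Claim, I would use the fundamental theorem of Hopf modules in the braided setting, reduced to the ungraded/trivial case. The key observation is that $R$ is \emph{connected}, i.e.\ its coradical is $\ku$, so $R$ has a coalgebra filtration $R=\bigcup_n R_{[n]}$ with $R_{[0]}=\ku$. Dually, $B$ being a right coideal subalgebra containing $1$ meets $R_{[0]}=\ku$ in $\ku$. Now take any nonzero $M\in(\ydh)^R_B$. Since $M$ is a right $R$-comodule and $R$ is connected, the coradical filtration of $M$ is exhausting and $M_{[0]}=\{m\in M:\rho(m)=m\otimes 1\}=M^{\co R}$ is nonzero; moreover $M^{\co R}$ is a sub-object of $M$ in $\ydh$ (the coaction and action of $H$ preserve it because $\rho$ is $\ydh$-colinear and the $R$-coaction is $H$-linear and $H$-colinear). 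Set $X=M^{\co R}$. I claim $N:=XB$ works: it is a right $B$-submodule of $M$, it is a right $R$-subcomodule (since $\rho(xb)=\rho(x)\rho(b)$ uses the braiding $R\otimes B\to B\otimes R$ and lands in $N\otimes R$ because $x$ is $R$-coinvariant), and it is a sub-object in $\ydh$. So $N\in(\ydh)^R_B$, and $X\otimes B\to N$ is surjective by construction; the point is to show it is injective.

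The injectivity is where the real work lies and is the main obstacle. The standard route is to build an explicit inverse $N\to X\otimes B$ using a ``coinvariants projection'' $\Pi:N\to X$, $\Pi(m)=m\sw{0}\cdot \Ss_B(m\sw{1})$ — but this requires $N$ to be, say, relatively injective or that $B$ be a Hopf algebra; here $B$ is only a right coideal subalgebra, so instead I would filter by the coradical filtration of $R$ (equivalently the $\N$-grading once we reduce to the associated graded, since $\gr R=\B(V)$-type objects are graded and $\gr B$ inherits a grading) and induct on the filtration degree. At the bottom, $N_{[0]}=X\cdot(B\cap\ku)=X$ and $(X\otimes B)_{[0]}=X\otimes\ku=X$, so the map is bijective in degree $0$; in the inductive step one uses that the associated graded of $B$ acting on the associated graded of $N$ is, by the connectedness of $\gr R$ and flatness of $\gr B$ over $\ku$, free in the appropriate sense, so that $\gr(X\otimes B)\to\gr N$ is an isomorphism, whence the original map is too. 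Here one transports the problem to $P=R^{\op}$ and $Q=R^{\cop}$ via Lemma \ref{lema:yd-subobjects} whenever it is convenient to have the comodule structure on the ``correct'' side. The delicate point is checking that $X=M^{\co R}$ is genuinely nonzero and that $XB$ is closed under all four structure maps (right $B$-action, right $R$-coaction, left $H$-action, left $H$-coaction) compatibly with the braiding — this is a routine but careful verification using that everything in sight is a morphism in $\ydh$, and that the braiding $c\colon R\otimes B\to B\otimes R$ restricts correctly because $B$ and $X$ are sub-objects in $\ydh$.
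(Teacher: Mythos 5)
Your setup is exactly the paper's: you take $X=M^{\co R}$, observe it is nonzero because $R$ is connected, put $N=XB$, and reduce everything to the injectivity of the multiplication map $X\otimes B\to M$. Up to that point the proposal is fine (the verification that $X$ and $XB$ are sub-objects with all four compatible structures is routine, as you say). But you then declare injectivity to be ``where the real work lies'' and the argument you offer for it does not close. The filtered/graded induction is only sketched, and its key step --- that the associated graded of $B$ acting on the associated graded of $N$ is ``free in the appropriate sense,'' so that $\gr(X\otimes B)\to\gr N$ is an isomorphism --- is essentially the statement you are trying to prove, restated one level down. Nothing in the sketch explains why the graded pieces of $N$ in degrees $\ge 1$ are as large as those of $X\otimes B$; that is precisely the content of injectivity. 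Passing to $P=R^{\op}$ or $Q=R^{\cop}$ does not help here either (indeed the paper's footnote warns that the one-sided variant of this argument genuinely fails).

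The missing idea is a socle argument, and it makes injectivity a one-liner rather than the hard part. Since $B$ is a right coideal subalgebra of the connected $R$, one has $\mathrm{soc}\,B=B^{\co R}=\ku$, so as a right $R$-comodule $X\otimes B$ (with $X$ coacted upon trivially) is a direct sum of copies of $B$ and $\mathrm{soc}(X\otimes B)=X\otimes\ku=X$. The map $f(v\otimes b)=vb$ is a morphism of $R$-comodules restricting to the identity $X\to X\subseteq M$ on socles; since $\ker f$ is a subcomodule and any nonzero subcomodule meets the socle nontrivially, $\ker f\cap X=0$ forces $\ker f=0$. No filtration, no induction, and no freeness input is needed --- only the observation that over a connected coalgebra a comodule map injective on socles is injective. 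I'd recommend replacing your inductive sketch with this argument; as written, the proposal identifies the correct objects but leaves the decisive step unproved.
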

Indeed, assume that we have proven the claim. We consider
all pairs $(N, X)$, where $N$ is a sub-object of $M$ in $({}^H_H\mathcal{YD})^R_B$, and
$X$ is a sub-object of $N$
in ${}^H_H\mathcal{YD}$ such that $X \otimes B \overset{\simeq}{\longrightarrow} N$
naturally, and
introduce the
natural order given by inclusion to the pairs. By Zorn's Lemma we have a maximal pair $(N,
X)$.
To see $N = M$, suppose on the contrary $N \subsetneq M$. With the assumed fact applied to
$M/L$,
we have
sub-objects  $\tilde{N} \subset M$ in $({}^H_H\mathcal{YD})^R_B$ and $\tilde{X} \subset
\tilde{N}$ in
${}^H_H\mathcal{YD}$ such that $N \subsetneq \tilde{N}$, $X \subsetneq \tilde{X}$ and
$\tilde{X}/X \otimes B \overset{\simeq}{\longrightarrow} \tilde{N}/N$.
A map of short exact exact sequences which is isomorphic on the kernels and the cokernels
shows that $\tilde{X} \otimes B \overset{\simeq}{\longrightarrow} \tilde{N}$, which
contradicts the maximality
of $(N, X)$, and hence shows $N = M$. This argument is the same as the one in
\cite[Proposition 1]{R}.

\smallbreak
We now prove the claim. Suppose $0 \ne M \in ({}^H_H\mathcal{YD})^R_B$. We wish to prove
$M$ includes a nonzero pair.
Set $X = M^{\co R}$. This is a subject of $M$ in ${}^H_H\mathcal{YD}$, and is the socle
$\mathrm{soc}\, M$
of the right $R$-comodule $M$, whence $X \ne 0$.
The tensor product $X \otimes B$ is naturally an object in $({}^H_H\mathcal{YD})^R_B$
whose $R$-comodule socle $\mathrm{soc}(X \otimes B) = X$. We see that
$f : X \otimes B \to M$, $f(v \otimes b) = vb$
is a morphism in $({}^H_H\mathcal{YD})^R_B$, which is injective since it is restricted to
the identity
on the socles. If $L = \mathrm{Im}\, f = XB$ then $(N, X)$ is a desired pair.
\epf

\begin{prop}\label{pro:masuoka}
Let $R$ be a connected Hopf algebra in ${}^H_H\mathcal{YD}$.
\begin{enumerate}\renewcommand{\theenumi}{\alph{enumi}}
\renewcommand{\labelenumi}{(\theenumi)}
\item\label{i}
$R$ is a free left and right module over every right coideal subalgebra.

\smallbreak\item\label{ii}
$R$ is a cofree left and right comodule over every quotient left module coalgebra $T$.

\smallbreak\item\label{iii}
$B \mapsto R/RB^+$ and $T \mapsto {}^{\co T}R$ give a bijection between the
set of right coideal subalgebras $B$ of $R$ and the set of quotient left
$R$-module coalgebras $T$ of $R$.

\smallbreak\item\label{iv}
If $B$ and $T$ correspond to each other via this bijection, then
there exists a
left $T$-colinear and right $B$-linear isomorphism $T \otimes B
\overset{\simeq}{\longrightarrow} R$.
\end{enumerate}
\end{prop}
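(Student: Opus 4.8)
The plan is to reduce everything to Lemma \ref{lem:claim} and its consequence, Lemma \ref{lema:yd-subobjects}, by recognizing the relevant modules and comodules as Hopf modules in $\ydh$. First I would prove \eqref{i}: given a right coideal subalgebra $B$ of $R$ in $\ydh$, the algebra $R$ itself is an object of $({}^H_H\mathcal{YD})^R_B$ (with the right $B$-action by multiplication and the right $R$-comodule structure given by $\Delta_R$; the compatibility is exactly the braided Hopf-module axiom since $B\subset R$ is a subalgebra and right coideal). Applying Lemma \ref{lem:claim} to $M=R$ yields a sub-object $X\in\ydh$ with $X\otimes B\xrightarrow{\simeq} R$ as right $B$-modules; in particular $R$ is free as a right $B$-module. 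For freeness as a \emph{left} $B$-module, I would pass to $P=R^{\op}$: by Lemma \ref{lema:yd-subobjects}(i), $B$ is a right coideal subalgebra of $P$ in ${}^{H^{\op}}_{H^{\op}}\mathcal{YD}$, and $P$ is again connected, so the case just proved gives that $P$ is free as a right $B$-module, i.e. $R$ is free as a left $B$-module.

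Part \eqref{ii} is the dual statement and I would obtain it by the same mechanism applied to $Q=R^{\cop}$: a quotient left $R$-module coalgebra $T$ of $R$ corresponds, via Lemma \ref{lema:yd-subobjects}(ii), to a right coideal subalgebra of the connected Hopf algebra $Q^{*}$ in the appropriate $\mathcal{YD}$ category — or, more directly, one runs the dual of the Hopf-module argument: $R$ is an object of the category of left $(R,T)$-relative Hopf modules in $\ydh$, and the dual of Lemma \ref{lem:claim} produces a quotient object $\overline{X}\in\ydh$ together with a left $T$-colinear isomorphism $T\otimes\overline{X}\xrightarrow{\simeq} R$, so $R$ is cofree as a left $T$-comodule; the right-comodule case follows by passing to $R^{\op}$.

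For \eqref{iii}, the two maps $B\mapsto R/RB^+$ and $T\mapsto {}^{\co T}R$ are already known (from the two bullet points preceding Lemma \ref{lema:yd-subobjects}) to land in the stated sets, so the content is that they are mutually inverse. Using the freeness from \eqref{i} I would show that for a right coideal subalgebra $B$ the counit induces $R/RB^+\simeq X$ (with $X$ as above), hence $R$ is faithfully flat over $B$ and $RB^+$ is exactly the kernel of $R\to R/RB^+$; then ${}^{\co (R/RB^+)}R=B$ follows from the standard faithfully flat descent argument for Hopf modules (the relative Hopf module $R$ over $R/RB^+$ has coinvariants $B$). Conversely, for a quotient left module coalgebra $T$, cofreeness from \eqref{ii} gives that $R\to T$ is faithfully coflat and that $R\cdot({}^{\co T}R)^+$ recovers the kernel, yielding $R/R({}^{\co T}R)^+\simeq T$. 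Part \eqref{iv} is then immediate: the isomorphism $X\otimes B\xrightarrow{\simeq} R$ from \eqref{i} is right $B$-linear by construction, and since $X\simeq R/RB^+\simeq T$ as left $R$-module coalgebras (hence as left $T$-comodules), this is the desired left $T$-colinear, right $B$-linear isomorphism $T\otimes B\xrightarrow{\simeq} R$.

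I expect the main obstacle to be bookkeeping rather than a conceptual gap: one must check carefully that the braiding-twisted compatibility conditions make $R$ a legitimate object of $({}^H_H\mathcal{YD})^R_B$, and that the identifications $R/RB^+\simeq X$ and ${}^{\co T}R\simeq B$ are compatible with \emph{all} the structure (module, comodule, and the $\ydh$ structure simultaneously), so that faithfully flat/coflat descent can be invoked verbatim in the braided setting. The reductions to $P$ and $Q$ handle the left/right asymmetry cleanly, so the real work is confined to the ``right $B$-module, right $R$-comodule'' case, where Lemma \ref{lem:claim} does the heavy lifting.
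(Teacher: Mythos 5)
Your part (a) coincides with the paper's argument, but the logical order you adopt afterwards ((a), (b), (c), (d), versus the paper's (a), (c), (d), and only then (b) as a corollary of (d)) forces you through two steps that do not hold as stated.

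The sharpest gap is in your part (d). You claim that the isomorphism $X \otimes B \overset{\simeq}{\longrightarrow} R$ produced by Lemma \ref{lem:claim} becomes left $T$-colinear once $X$ is identified with $T=R/RB^+$ via the projection $\pi$. This is not automatic: $X$ is an arbitrary complement obtained by a socle-plus-Zorn argument, and nothing forces $(\pi\otimes\id)\Delta(X)\subseteq T\otimes X$ --- one only knows $(\pi\otimes\id)\Delta(X)\subseteq T\otimes XB$, so the bijection $X\simeq T$ in $\ydh$ need not respect the $T$-comodule structure. The paper avoids this by first establishing, inside the proof of (c), that $R$ is an injective cogenerator as a left $T$-comodule (via the bijectivity of $g:R\otimes_B R\to T\otimes R$, which in turn uses the left $B$-freeness from (a)), and then using $T$-injectivity to extend $\ku\to R$ to a unit-preserving \emph{left $T$-colinear} map $h:T\to R$, whose right $B$-linearization $h_B$ is shown to be bijective by a socle argument together with the $T$-injectivity of $T\otimes B$. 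Some such construction of a genuinely colinear section is unavoidable; the splitting coming from (a) does not do the job.

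The second gap is in your part (b), which appeals to ``the dual of Lemma \ref{lem:claim}'' or to a right coideal subalgebra of $Q^{*}$. Neither is available: $R$ is typically infinite dimensional (think of $T(V)$ or a pre-Nichols algebra), so $Q^{*}$ is not a Hopf algebra in the relevant category; and the proof of Lemma \ref{lem:claim} rests on the fact that every nonzero comodule over the \emph{connected} coalgebra $R$ has nonzero coinvariants (a socle/coradical-filtration argument), whose formal dual --- that every nonzero module over the augmented algebra $R$ has a nonzero trivial quotient --- fails for general augmented algebras and is exactly what a Nakayama-type step would need. This is why the paper derives (b) only at the end, from the left $T$-cofreeness in (d) plus Lemma \ref{lema:yd-subobjects}(ii) for the right-hand version. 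Your sketch of (c) is closer to the paper's (which proves $g$ bijective and then cites \cite[Proposition 1.4(2)]{M3}), but as written it leans on (b) for one direction, so it inherits the problem above.
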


\begin{proof} \eqref{i}
 Let $B$ be a right coideal subalgebra. First, we prove the right $B$-freeness.
Notice that $R \in
({}^H_H\mathcal{YD})^R_B$. The right $B$-freeness in \eqref{i} follows from Lemma
\ref{lem:claim}. By Lemma \ref{lema:yd-subobjects} (i), the just proved result applied to
the $P$ of the lemma
implies the left $B$-freeness\footnote{ One can define the analogous category
${}_B({}^H_H\mathcal{YD})^R$. But, it is
impossible
to discuss as above, because for $M \in {}_B({}^H_H\mathcal{YD})^R$, the right
$R$-comodule $B \otimes M^{\co R}$
is not isomorphic to a direct sum of copies of $B$, and so $\mathrm{soc}(B \otimes
M^{\co R}) = M^{\co R}$
may not be true.}.

\eqref{iii} Let $T$ be as in \eqref{ii}, and set $B = {}^{\co T}R$. We see that the
natural
left $T$-comodule structure $R \to T \otimes R$
on $R$ is right $B$-linear. The base extension along $B \to R$
induces a right $B$-linear and left $T$-colinear map
$ g : R \otimes_B R \to T \otimes R.$
This is induced from the canonical isomorphism $R \otimes R
\overset{\simeq}{\longrightarrow} R \otimes R$,
and hence is a surjection. Note that $T$ is also connected.

Since $R$ is left $B$-free as was shown in \eqref{i}, the left $T$-comodule socle of $R
\otimes_B R$ equals $B \otimes_B R$.
It follows that $g$ is injective, and hence bijective, since it restricts to
$\mathrm{id}_R$ on the
left $T$-comodule socles.

The bijection $g$ together with the left $B$-freeness of $R$ shows that $R$ is an
injective
cogenerator (or equivalently, faithfully coflat) as a left $T$-comodule; see also
\cite[Proposition 1.4(1)]{M3}. The desired one-to-one correspondence follows
just as in the ordinary situation; see \cite[Proposition 1.4(2)]{M3}.

\eqref{iv} Let $B$ and $T$ correspond to each other.
The left $T$-injectivity of $R$ allows the inclusion $k \to R$ of left
$T$-comodules to extend to a unit-preserving left $T$-colinear map $h : T \to R$. The
right
$B$-linearization of $h$
$$ h_B : T \otimes B \to R, \ h_B(t \otimes b) = h(t)b $$
is right $B$-linear and left $T$-colinear and injective, since it restricts to
$\mathrm{id}_B$ on the $T$-comodule socles. It is an isomorphism, since $T
\otimes B$ is
$T$-injective.

\eqref{ii} Let $T$ be as in \eqref{ii}. We see from Part \eqref{iv} that $R$ is
left $T$-cofree. Lemma \ref{lema:yd-subobjects} (ii) shows that $R$ is also right
$T$-cofree.
\end{proof}

\begin{Cor}\label{cor:coflat}
Let $H$ be a cosemisimple Hopf algebra, let $R, T$ be connected braided Hopf algebras
in $\ydh$, such that $T$ is a quotient of $R$. Then $R\# H$ is left and right cofree over
$T\# H$. In particular, it is left and right coflat.
\end{Cor}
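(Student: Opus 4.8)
The plan is to deduce the corollary from Proposition \ref{pro:masuoka}\eqref{ii} by transporting the cofreeness statement along the bosonization functor $R\mapsto R\# H$. First I would recall that $T$ being a quotient braided Hopf algebra of $R$ in $\ydh$ is in particular a quotient left $R$-module coalgebra of $R$, so Proposition \ref{pro:masuoka}\eqref{ii} applies: $R$ is a cofree left and right $T$-comodule, say $R\simeq T\otimes W$ as left $T$-comodules for some $W\in\ydh$ (and similarly on the right). The key point is then to check that this comodule isomorphism is compatible enough with the Yetter--Drinfeld structure to survive bosonization. Since the functor $(-)\# H$ is exact and the comultiplication of $R\# H$ restricts appropriately — concretely, the coalgebra map $R\# H\to T\# H$ is the bosonization of the module coalgebra surjection $R\to T$, and the left $T\# H$-comodule structure on $R\# H$ is the bosonization of the left $T$-comodule structure on $R$ in $\ydh$ — a left $T$-colinear isomorphism $T\otimes W\overset{\simeq}{\to}R$ in $\ydh$ bosonizes to a left $T\# H$-colinear isomorphism $(T\# H)\otimes W\overset{\simeq}{\to}R\# H$. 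Thus $R\# H$ is left cofree over $T\# H$, and symmetrically on the right using the right-handed version of Proposition \ref{pro:masuoka}\eqref{ii}.

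The one genuine hypothesis that must be verified before invoking Proposition \ref{pro:masuoka} is that $R$ (and $T$) be \emph{connected} Hopf algebras in $\ydh$; this is exactly what is assumed in the statement of the corollary, so there is nothing to do there. I would also note that $H$ being cosemisimple is used only to guarantee that the category $\ydh$ over which we bosonize has enough injectives/is well-behaved, and more specifically that $H$ itself is coflat (indeed cofree, being cosemisimple hence a direct sum of simple coalgebras) over $\ku$, so that the bosonization $R\# H$ of a cofree $R$ is again cofree over $T\# H$ rather than merely coflat; the ``in particular'' clause about coflatness then follows since cofree implies coflat.

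The main obstacle I anticipate is purely bookkeeping: one must carefully track the interaction between the braiding $\tau\colon R\otimes T\to T\otimes R$ (which intervenes in the definition of $T$-comodule structures in a braided category) and the smash-coproduct structure of $R\# H$, to be sure that "left $T$-colinear in $\ydh$" really does bosonize to "left $T\# H$-colinear". This is standard — it is the same dictionary used throughout \cite{M3, M4} and recalled in the paragraph preceding Lemma \ref{lema:yd-subobjects}, where the Hopf-algebra maps $H\to A\to H$ are shown to induce precisely these structures — so I would phrase the argument as: apply Proposition \ref{pro:masuoka}\eqref{ii} to get cofreeness over $T$ inside $\ydh$, then apply the exact monoidal functor $(-)\# H$ together with the cofreeness of $H$ over $\ku$ to conclude. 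The right-module-coalgebra/right-comodule side is handled identically, completing the proof. \qed
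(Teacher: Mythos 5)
There is a genuine gap at the central step of your plan. You want to quote Proposition \ref{pro:masuoka}\eqref{ii}--\eqref{iv} as producing a left $T$-colinear isomorphism $T\otimes W\overset{\simeq}{\to}R$ \emph{in} $\ydh$, and then bosonize it. But the isomorphism of Proposition \ref{pro:masuoka}\eqref{iv} is obtained by extending the unit $\ku\to R$ along the left $T$-injectivity of $R$ \emph{as a plain $T$-comodule}; the resulting map $h\colon T\to R$ is only unit-preserving and left $T$-colinear, not $H$-linear or $H$-colinear, so $h_B\colon T\otimes B\to R$ is not a morphism in $\ydh$. To get an extension inside $\ydh$ one would need $R$ to be injective as a $T$-comodule \emph{internal to} $\ydh$, and cosemisimplicity of $H$ alone (the only hypothesis here --- $H$ is not assumed semisimple) does not give that. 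Without the $\ydh$-compatibility, the induced map $(T\#H)\otimes W\to R\#H$ need not be $T\#H$-colinear, because the smash coproduct twists the $T\#H$-coaction by the $H$-coaction on the $R$-leg; so the bosonization step is exactly where the content lies, and it is not "purely bookkeeping". Note also that cofreeness, unlike injectivity, is not preserved by such base changes in general, so one cannot simply weaken the claim.

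The paper's proof circumvents this by never transporting the cofreeness isomorphism itself. It downgrades cofreeness of $R$ over $T$ to \emph{injectivity}, observes that since $H$ is cosemisimple the coalgebra surjection $\id\ot\varep\colon T\#H\to T$ is a cosemisimple coextension (so $T$-injective implies $T\#H$-injective --- this, not well-behavedness of $\ydh$ or cofreeness of $H$ over $\ku$, is where cosemisimplicity enters), and then recovers cofreeness of $R\#H$ over $T\#H$ from injectivity via Schneider's normal basis theorem \cite[Theorem 4.2]{Sc}, using that the coradical $H$ of $T\#H$ lifts isomorphically to the coradical of $R\#H$. The right-hand version is obtained by rewriting the bosonizations as $H\#R'$, $H\#T'$ with $R',T'\in\hyd$. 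If you want to salvage your plan, you would have to either prove an injective-extension statement in the braided category (essentially redoing \cite[Theorem 4.2]{Sc}) or follow the paper's injectivity route.
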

\pf
As $H$ is cosemisimple, the coalgebra surjection $\id\ot\,\varep:T\# H\to T$ is a
cosemisimple coextension, that is a left or right $T\# H$-comodule is injective if it is
injective as a $T$-comodule. Since $R$ is left $T$-cofree as a $T$-comodule and so left
$T$-injective, it
follows that $R\# H$, being left $T$-injective, is left $T\#H$ injective. Note that the
coradical of $T\#H$ is isomorphically liftable to the coradical of $R\#H$, since both of
them coincide with $H$. It follows by the left version of \cite[Theorem 4.2]{Sc} that
there is a left $T\#H$-colinear and
right $\,^{\co T\#H}(R\# H)$-linear isomorphism
$$
R\#H\simeq T\#H\ot \,^{\co T\#H}(R\# H).
$$
By switching the sides one can present $R \# H$, $T \# H$ as smash products $H \# R'$, $H
\# T'$
of braided Hopf algebras $R'$, $T'$ in $\mathcal{YD}^H_H$, such that $T'$ is a quotient of
$R'$, and prove that
$R'$ is right (and left) $T'$-injective, which shows as above that there is a right $T\#
H$-colinear
and left $(R\# H)^{\co\, T\# H}$-linear isomorphism $R\# H \simeq (R\# H)^{\co\, T\#
H}\otimes(T\# H)$, by \cite[Theorem 4.2]{Sc}.
\epf

\begin{Cor}\label{cor:flat}
Let $R$ be a connected braided Hopf algebra
in $\ydh$ and let $B\subseteq R$ be a left coideal subalgebra. Let $L=R\#H$, then $B\#1$
is a left coideal subalgebra of $L$ and $L$ is a left $B\#1$-free module.

If $Y=\Ss(B\#1)$, then $L$ is a right $Y$-free module. In particular,
it is right $Y$-faithfully flat.
\end{Cor}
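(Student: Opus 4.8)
The plan is to deduce everything from Proposition \ref{pro:masuoka}\eqref{i} together with the elementary behaviour of smash products under the passage $R \rightsquigarrow R\# H$. First I would check that $B\#1$ is a left coideal subalgebra of $L = R\# H$: since $B$ is a left coideal subalgebra of $R$ in $\ydh$, its coaction satisfies $\lambda_R(B)\subseteq H\ot B$ (the left $H$-comodule structure) and $\Delta_R(B)\subseteq R\ot B$ (the left $R$-coideal condition). Translating through the formula for $\Delta$ on $A = R\# H$ recalled in the proof of Lemma \ref{lema:yd-subobjects} — namely $\Delta(x) = (x^{(2)})\_{-1}(\Ss_H^{-1}((x^{(2)})\_{-2})\rightharpoonup x^{(1)})\ot (x^{(2)})\_{0}$ for $x\in R$ — one sees that $\Delta_L(B\#1)\subseteq L\ot (B\#1)$, so $B\#1$ is a left coideal; it is visibly a subalgebra since $B$ is one and $B\#1$ is closed under the product of $R\# H$.

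Next, for the left freeness: by Proposition \ref{pro:masuoka}\eqref{i}, $R$ is a free left $B$-module, say $R \simeq B^{(I)}$ as left $B$-modules for some index set $I$ (a basis can be chosen by a standard graded/filtered argument, or abstractly since $R\in ({}^H_H\mathcal{YD})^R_B$ and Lemma \ref{lem:claim} furnishes a YD-subobject $X$ with $X\ot B\overset{\simeq}{\to} R$, so $I$ indexes a basis of $X$). Then $R\# H \simeq R\ot H \simeq (B\ot H)^{(I)}$ as left $B\#1$-modules, because the left action of $B\#1$ on $R\# H$ is just $(b\#1)(r\# h) = br \# h$, i.e. it acts only on the $R$-tensorand; hence $L = R\# H$ is free as a left $B\#1$-module (on the basis $X\ot H$, or $X\ot\{\text{basis of }H\}$).

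For the right statement, put $Y = \Ss_L(B\#1)$. Applying the antipode, which is an anti-automorphism of $L$ and carries left coideal subalgebras to right coideal subalgebras, $Y$ is a right coideal subalgebra of $L$; and $\Ss_L$ sends the left $B\#1$-module decomposition of $L$ to a right $Y$-module decomposition of $L$, since $\Ss_L\big((b\#1)\cdot z\big) = \Ss_L(z)\cdot \Ss_L(b\#1)$. As $\Ss_L$ is bijective (the antipode of $R$, hence of $A = R\# H$, is bijective because $R$ is connected, as noted in \S\ref{subsect:quotients-fflat}), it is in particular an isomorphism of the underlying vector spaces compatible with these one-sided actions, so freeness of $L$ as a left $B\#1$-module transports to freeness of $L$ as a right $Y$-module. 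A free module is faithfully flat, giving the final assertion. The only mildly delicate point is the bookkeeping with $\Ss_H^{-1}$ in the coproduct formula when verifying that $B\#1$ is a left \emph{coideal} — but this is exactly the computation already performed in the proof of Lemma \ref{lema:yd-subobjects}, restricted to the sub-object $B\subseteq R$, so no new work is needed there.
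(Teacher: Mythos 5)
Your proposal is correct and follows essentially the same route as the paper: the coproduct formula for $R\#H$ gives that $B\#1$ is a left coideal subalgebra, left freeness comes from Proposition \ref{pro:masuoka}~\eqref{i} combined with the fact that $L\simeq R\otimes H$ is free over $R\#1$, and the right statement is obtained by transporting through the (bijective) antipode — precisely the step the paper dismisses as ``straightforward.'' The only caveat, which you inherit from the paper itself, is that Proposition \ref{pro:masuoka}~\eqref{i} is stated for \emph{right} coideal subalgebras while $B$ here is a left one, so strictly speaking one should pass through the op/cop symmetry of Lemma \ref{lema:yd-subobjects} before invoking it.
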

\pf
The formula for $\Delta_{R\#H}$ shows that $B\#1$ is a left coideal
subalgebra. Now, the first statement  follows since $L$ is left $R$-free and $R$ is left
$B$-free by Proposition \ref{pro:masuoka} \eqref{i}. The second is straightforward.
\epf

\section{The shape of all possible deformations}\label{sect:shape}

Let $H$ be as in \eqref{eqn:H} and $A$ be a Hopf algebra whose coradical is isomorphic to $H$. In the first part of this section, we assume that $H$ is also semisimple ({\it e. ~g.} when the characteristic of $\ku$ is $0$) and analyze the structure of $A$.

A fundamental information is that there exists a coalgebra $H$-bimodule projection $\Pi:A\rightarrow H$ such that $\Pi_{|H}=\id_H$ \cite[Theorem 5.9.c)]{ardizzonimstefan}. Hence $A$ is a Hopf bimodule
coalgebra over $H$ via the left and right multiplication and the coactions
$\rho_L=(\Pi\ot\id)\Delta$ and $\rho_R=(\id \ot \Pi)\Delta$. Let $P_0=0$, $P_1=\{x\in
A:\Delta(x)=\rho_L(x)+\rho_R(x)\}$ and
\begin{align*}
P_n&=\{x\in A:\Delta(x)-\rho_L(x)-\rho_R(x)\in\sum_{i=1}^{n-1}P_i\ot P_{n-i}\}.
\end{align*}
Then $P_n=A_{[n]}\cap\ker\Pi$ \cite[Lemma 1.1]{andrunatale}. Clearly, $P_n$ is a Hopf
sub-bimodule of $A_{[n]}$ and $A_{[n]}/A_{[n-1]}=P_{n}/P_{n-1}$.

The canonical projection
$\pi_n:A_{[n]}\rightarrow A_{[n]}/A_{[n-1]}$ is a Hopf bimodule map and it has a section
$\iota_n$ since $H$ is semisimple and cosemisimple. Therefore
$$A\simeq H\oplus\bigoplus_{n\geq1}\iota_n(P_n/P_{n-1})$$
as Hopf bimodule. We extend $\pi_n$ to be $0$
in $\bigoplus_{m>n}\iota_m(P_m/P_{m-1})$, $n>0$, and set $\pi_{0}=\Pi$. We shall
generally
omit $\iota_m$.

\smallbreak

We recall the structure of $\gr A$. As vector spaces, $\gr
A(n)=A_{[n]}/A_{[n-1]}=P_n/P_{n-1}$. The multiplication and comultiplication of $\gr A$
are
\begin{align*}
\pi_n(x)\pi_m(y)&=\pi_{n+m}(xy)\quad \mbox{ and}\\
\Delta_{\gr A}(\pi_n(x))&=\sum_{i=0}^n\pi_i(x\_{1})\ot\pi_{n-i}(x\_{2}),\quad x\in
A_{[n]},y\in A_{[m]}.
\end{align*}
By abuse of notation, $\pi_0$ denotes the projection $\gr A\twoheadrightarrow H$ with
kernel $\oplus_{n>0}\gr A(n)$ and $\pi_{0|H}=\id$. Then $\gr A$ is a Hopf bimodule over
$H$ via the left and right multiplication and the coactions $(\pi_0\ot\id)\Delta_{\gr A}$,
$(\id \ot \pi_0)\Delta_{\gr A}$.

\medbreak

It is well-known that $\gr A\simeq(\gr A)^{\co H}\#H$ as Hopf algebras. In
\cite[Theorem 5.23]{ardizzonimstefan}, it is shown that $A^{\co H}$ is a coalgebra in
$\ydh$ such that $A\simeq A^{\co H}\#H$ as coalgebras and the multiplication in $A$ is
recovered with an extra structure on $A^{\co H}$, see also \cite{S2}.

\begin{lema}\label{le:hopfbimod structure of A}
$\pi_n:\iota_n(P_n/P_{n-1})\rightarrow\gr A(n)$ is an isomorphism of Hopf bimodules
over $H$ for all $n$.
Therefore $A^{\co H}\simeq(\gr A)^{\co H}$ in $\ydh$.
\end{lema}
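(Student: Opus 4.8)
The plan is to build the isomorphism $\pi_n$ degree by degree, exploiting the Hopf bimodule structures on both sides and the semisimplicity assumptions that were already used to split $A$ as $H\oplus\bigoplus_{n\ge 1}\iota_n(P_n/P_{n-1})$ as a Hopf bimodule. First I would recall that by construction $\pi_n:A_{[n]}\to A_{[n]}/A_{[n-1]}=\gr A(n)$ is a surjective Hopf bimodule map, and that, restricted to the chosen complement $\iota_n(P_n/P_{n-1})$ of $A_{[n-1]}$ inside $A_{[n]}$, it is a bijection of vector spaces by the very definition of $\iota_n$ as a section of $\pi_n$. So the only content is that this vector space bijection is compatible with all four structure maps (left and right $H$-action, left and right $H$-coaction) defining the Hopf bimodule. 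The two $H$-actions are multiplication, and the passage to $\gr A$ was defined precisely so that $\pi_n(x)\pi_m(y)=\pi_{n+m}(xy)$; taking $m=0$ (so $y\in H$ and $\pi_0=\Pi$) gives right-$H$-linearity, and taking $n=0$ gives left-$H$-linearity of $\pi_n$ on the component $\iota_n(P_n/P_{n-1})$. For the coactions one uses $\Delta_{\gr A}(\pi_n(x))=\sum_{i=0}^n\pi_i(x\_{1})\ot\pi_{n-i}(x\_{2})$ and composes with $\pi_0\ot\id$ (resp. $\id\ot\pi_0$) on the appropriate factor; since $\pi_i=0$ on $\iota_n(P_n/P_{n-1})$ for $i\ne n$, both composites collapse to the statement that $\pi_n$ intertwines $\rho_L,\rho_R$ on $A$ with the corresponding coactions on $\gr A$. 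Assembling these four compatibilities shows $\pi_n:\iota_n(P_n/P_{n-1})\to\gr A(n)$ is an isomorphism of Hopf bimodules over $H$.

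For the second assertion, I would invoke the structure theorem for Hopf bimodules over a cosemisimple Hopf algebra $H$ (the version of the fundamental theorem, as in the Ardizzoni--Menini--\c Stefan framework cited, or equivalently as in \cite{S2}): the functor of coinvariants $M\mapsto M^{\co H}$ gives an equivalence between the category of Hopf bimodules over $H$ and $\ydh$, sending a Hopf bimodule to the Yetter--Drinfeld module of its right coinvariants (with the residual left action and coaction). Since $\bigoplus_n \iota_n(P_n/P_{n-1})\simeq\bigoplus_n\gr A(n)$ as Hopf bimodules over $H$ by the degreewise isomorphisms just established, applying this equivalence yields $\big(\bigoplus_n\iota_n(P_n/P_{n-1})\big)^{\co H}\simeq\big(\bigoplus_n\gr A(n)\big)^{\co H}$ in $\ydh$. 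The left-hand side is $A^{\co H}$ — because $A\simeq H\oplus\bigoplus_{n\ge1}\iota_n(P_n/P_{n-1})$ as Hopf bimodules and $H^{\co H}=\ku$ contributes only the degree-zero piece — and the right-hand side is $(\gr A)^{\co H}$ for the same reason. One must be slightly careful that the right coinvariants here are taken with respect to the coaction $\rho_R=(\id\ot\Pi)\Delta$ (resp. $(\id\ot\pi_0)\Delta_{\gr A}$), which is exactly the coaction for which $A^{\co H}$ and $(\gr A)^{\co H}$ are the objects appearing in the known decompositions $\gr A\simeq(\gr A)^{\co H}\# H$ and $A\simeq A^{\co H}\# H$ recalled just before the statement.

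The main obstacle, such as it is, is purely bookkeeping: matching the coaction conventions so that ``$(\,\cdot\,)^{\co H}$'' in the Hopf-bimodule equivalence is literally the same functor producing the $A^{\co H}$ and $(\gr A)^{\co H}$ used elsewhere, and checking that the direct-sum decomposition $A\simeq H\oplus\bigoplus_{n\ge1}\iota_n(P_n/P_{n-1})$ is respected by $(\,\cdot\,)^{\co H}$ — which is automatic since that functor is additive (indeed an equivalence). No genuinely hard step arises; the degreewise isomorphisms $\pi_n$ are forced by the definitions of the product and coproduct on $\gr A$, and the transfer to $\ydh$ is the structure theorem for relative Hopf modules over a cosemisimple Hopf algebra.
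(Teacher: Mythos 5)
Your proposal is correct and follows essentially the same route as the paper: the paper's proof consists precisely of the verification that $\pi_n$ intertwines the two multiplications (via $\pi_n(x)\pi_m(y)=\pi_{n+m}(xy)$ with $m=0$ resp.\ $n=0$) and the two coactions (via the collapse of $(\id\ot\pi_0)\Delta_{\gr A}(\pi_n(x))$ to $\pi_n(x_{(1)})\ot\Pi(x_{(2)})$), bijectivity being immediate from $\iota_n$ being a section, and it dismisses the final assertion as ``easy'' --- which you flesh out, as intended, by the equivalence between Hopf bimodules over $H$ and $\ydh$ given by taking coinvariants. The only quibble is your stated reason for the collapse of the sum: it comes from $\pi_0\circ\pi_{n-i}=0$ for $i\neq n$ (the projection $\gr A\to H$ kills the positive-degree components of $\gr A$), not from $\pi_i$ vanishing on $\iota_n(P_n/P_{n-1})$ for $i\neq n$; this does not affect the argument.
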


\begin{proof}
If $x\in \iota_n(P_n/P_{n-1})$ and $h\in H$, then $\pi_n(hx)=\pi_0(h)\pi_n(x)$ and
$\pi_n(xh)=\pi_n(x)\pi_0(h)$. Also,
\begin{align*}
(\id \ot
\pi_0)\Delta^{\gr}(\pi_n(x))&=\sum_{i=0}^n\pi_i(x\_{1})\ot\pi_0\circ\pi_{n-i}(x\_{2}
)=\pi_n(x\_{1})\ot\pi_0(x\_{2})\\
&=(\pi_n\ot\id)(\id \ot \Pi)\Delta(x)=(\pi_n\ot\id)\rho(x).
\end{align*}
Analogously, $\pi_n$ is a left comodule map. The last assertion is easy.
\end{proof}

\begin{Rem}
Assume that the dimension of $A$ is finite. If the isomorphism $A^{\co H}\simeq(\gr A)^{\co H}$ in Lemma
\ref{le:hopfbimod structure of A} is also of coalgebras, then
$A\simeq\gr A$ as coalgebras; thus $A$ is a cocycle deformation of $\gr A$ by
\cite[Corollary 5.9]{S}.
\end{Rem}

We fix $V\in\ydh$ with $\dim V < \infty$. Let $\B(V)=T(V)/\J(V)$ be the Nichols algebra
of $V$ see \cite{AS2} and set $\T(V)=T(V)\#H$.

\begin{Def}\label{eq:properties of A and phi}
A {\it lifting map} is an epimorphism $\phi:\T(V)\rightarrow A$ of Hopf algebras such that
$\phi_{|H}=\id_H$ and $\phi_{|V\#H}:V\#H\rightarrow P_1$ is an isomorphism of Hopf
bimodules over $H$.
\end{Def}

\begin{prop}\cite[Proposition 2.4]{AV}
Let $A$ be a Hopf algebra whose coradical is a Hopf subalgebra isomorphic to $H$. Then
$A$ is a lifting of $\B(V)$ over $H$ if and only if there exists a lifting map
$\phi:\T(V)\rightarrow A$. \qed
\end{prop}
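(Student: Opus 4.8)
The plan is to prove the two implications separately; the structural input is already available, namely the strict compatibility of a lifting map with the coradical filtrations, the description in \cite{AS2} of $(\gr A)^{\co H}$, and the canonical Hopf bimodule isomorphism $P_1\simeq\gr A(1)$ of Lemma \ref{le:hopfbimod structure of A}.

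\textbf{Sufficiency.} Suppose $\phi\colon\T(V)\to A$ is a lifting map. I would first note that, being a surjective coalgebra map with $\phi_{|H}=\id_H$ between Hopf algebras with coradical $H$, it satisfies $\phi(\T(V)_{[n]})=A_{[n]}$ for all $n$. As $\T(V)=T(V)\#H$ is generated as an algebra by $1\#H$ and $V\#1$, surjectivity forces $A$ to be generated by $H$ together with $\phi(V\#1)\subseteq\phi(V\#H)=P_1$; passing to $\gr A\simeq R\#H$, with $R=(\gr A)^{\co H}$ a connected graded braided Hopf algebra in $\ydh$, one deduces $R=\ku\langle R^1\rangle$. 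On the other hand, $\phi_{|V\#H}\colon V\#H\to P_1=\gr A(1)$ is an isomorphism of Hopf bimodules over $H$, which under the usual equivalence corresponds to an isomorphism $V\simeq R^1$ in $\ydh$. Since by \cite{AS2} the subalgebra of $R$ generated by $R^1$ is the Nichols algebra $\B(R^1)\simeq\B(V)$, and $R$ is generated by $R^1$, we get $R\simeq\B(V)$; hence $\gr A\simeq\B(V)\#H$ and $A$ is a lifting of $\B(V)$ over $H$.

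\textbf{Necessity.} Suppose $A$ is a lifting of $\B(V)$ over $H$ and fix an isomorphism $\gr A\simeq\B(V)\#H$, which identifies $\gr A(1)$ with $\B^1(V)\#H=V\#H$ as Hopf bimodules. Composing with the Hopf bimodule isomorphism $P_1\xrightarrow{\ \sim\ }\gr A(1)$ of Lemma \ref{le:hopfbimod structure of A}, I get a Hopf bimodule isomorphism $\psi\colon V\#H\xrightarrow{\ \sim\ }P_1\subseteq A$, and I set $f\colon V\to A$, $f(v)=\psi(v\#1)$. Because $\psi$ is left and right $H$-linear, a short computation with the cross relations in the smash product yields $f(h\acts v)=h\_1\, f(v)\,\Ss(h\_2)$ for $h\in H$; since $T(V)$ is free, this is exactly the condition guaranteeing that there is a unique algebra map $\phi\colon\T(V)\to A$ with $\phi_{|1\#H}$ the inclusion $H\hookrightarrow A$ and $\phi_{|V\#1}=f$, and then $\phi_{|V\#H}=\psi$. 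Next I would check that $\phi$ is a bialgebra map by testing $\Delta$ and $\cou$ on generators: on $1\#H$ this is clear, while on $v\#1$ the bosonization coproduct gives $\Delta_{\T(V)}(v\#1)=(v\#1)\ot(1\#1)+(1\#v\_{-1})\ot(v\_0\#1)$, so one needs $\Delta_A(f(v))=f(v)\ot 1+v\_{-1}\ot f(v\_0)$ and $\cou(f(v))=0$. Both hold: $f(v)\in P_1$, so $\cou(f(v))=0$ and $\Delta_A(f(v))=\rho_L(f(v))+\rho_R(f(v))$ by the very definition of $P_1$, and the required identity is precisely the statement that $\psi$ intertwines the left and right $H$-coactions, i.e.\ that $\psi$ is a morphism of Hopf bicomodules. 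Thus $\phi$ is a Hopf algebra map, and it is surjective because $\gr A\simeq\B(V)\#H$ is generated in degrees $0$ and $1$, so $A$ is generated by $A_{[1]}=H\oplus P_1\subseteq\phi(\T(V))$. Since $\phi_{|V\#H}=\psi$ is a Hopf bimodule isomorphism onto $P_1$ by construction, $\phi$ is a lifting map.

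\textbf{Expected main obstacle.} The sufficiency direction is soft once the \cite{AS2} description of $(\gr A)^{\co H}$ is at hand. The delicate point is necessity: promoting the abstract Hopf bimodule isomorphism $\psi$ to an honest Hopf algebra map $\phi$ and checking that this $\phi$ is automatically compatible with the comultiplication. What makes it work is that $\psi$ preserves all four ingredients of the Hopf bimodule structure --- bilinearity over $H$ produces the cross relations defining $T(V)\#H$, while bicolinearity, together with the characterization $P_1=\{x:\Delta(x)=\rho_L(x)+\rho_R(x)\}$, produces the coalgebra compatibility on the generators $v\#1$. Keeping track of which piece of structure is being used where is the main bookkeeping challenge.
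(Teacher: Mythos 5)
Your proof is correct, but note that the paper itself does not prove this statement: it is quoted from \cite[Proposition 2.4]{AV} and stated with no argument, so there is no in-paper proof to compare against. What you have written is the standard argument and it is sound. For sufficiency, your three inputs --- surjectivity of $\phi$ forces $A=\ku\langle A_{[1]}\rangle$, hence $R=(\gr A)^{\co H}$ is generated by $R^1$; the Hopf bimodule isomorphism $\phi_{|V\# H}\colon V\# H\to P_1\simeq \gr A(1)$ translates, under the equivalence between Hopf bimodules over $H$ and $\ydh$, into $R^1\simeq V$; and the fact from \cite{AS2} that the subalgebra of the diagram generated by $R^1$ is $\B(R^1)$ --- do combine to give $\gr A\simeq\B(V)\# H$. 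For necessity, reconstructing $\phi$ from the Hopf bimodule isomorphism $\psi$ via the universal property of the smash product and verifying comultiplicativity on generators using $P_1=\{x:\Delta(x)=\rho_L(x)+\rho_R(x)\}$ together with the bicolinearity of $\psi$ is exactly the right mechanism. Two small points deserve a sentence each in a written-up version. First, the passage from ``$A$ is generated by $A_{[1]}$'' to ``$\gr A$ is generated in degrees $0$ and $1$'' uses $A_{[n]}=(A_{[1]})^{n}$; this holds because the powers of $A_{[1]}$ form a Hopf algebra filtration whose degree-zero term contains the coradical, hence dominate the coradical filtration. Second, the given isomorphism $\gr A\simeq\B(V)\# H$ is a priori only an abstract Hopf algebra isomorphism; one should replace it by its associated graded map (both sides being coradically graded) and absorb its degree-zero part, an automorphism of $H$, into the identification of the coradical of $A$ with $H$, so that the degree-one part really is a morphism of Hopf bimodules over $H$ with the standard structures. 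Neither point is a gap, but both are where a careless version of the argument would silently go wrong.
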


The case $H=\ku\Gamma$, $\Gamma$ an abelian group, in the above proposition has been
previously considered, see for instance \cite{Kh, He}. 

\smallskip

From now on, {\it we assume that $A$ is a lifting of $\B(V)$ over $H$ with lifting map
$\phi:\T(V)\rightarrow A$.} In particular, $V$ is a submodule of $A$ in $\ydh$.

Let $\BB^n_\J$ be a basis of $\J^n(V)$ and extend it to a
basis $\BB^n\cup\BB^n_\J$ of $V^{\ot n}$. We still denote by $\BB^n$ the basis of the
quotient $\B^n(V)$.
Then $\BB=\bigcup_n\BB^n$ is a basis of
$\B(V)$, $\BB_\J=\bigcup_n\BB^n_\J$ is a basis of $\J(V)$. Let $\BB_H$ be a basis of $H$.

\begin{Rems}\label{rem:phi, bases de BV y A} By Lemma \ref{le:hopfbimod structure of A} we
have that:
\begin{enumerate}\renewcommand{\theenumi}{\alph{enumi}}
\renewcommand{\labelenumi}{(\theenumi)}
\item\label{item:base de J rem:phi, bases de BV y A}
$\{\phi(x)-\Pi(\phi(x)):\, x\in\BB^n_\J\}\subset P_{n-1}$.
\smallbreak
\item $\{\phi(x)h-\Pi(\phi(x))h:x\in\BB^i, h\in\BB_H, 0<i\leq n\}$ is a basis of $P_n$.
\smallbreak
\item $\phi(\BB^2)H=\iota_2(P_2/P_{1})$ and $A_{[2]}\simeq(\B(V)\#H)_{[2]}$ as coalgebras.
\smallbreak
\item $\{\phi(x)h:x\in\BB,h\in\BB_H\}$ is a basis of $A$. Let $\iota:A\rightarrow\T(V)$ be
the linear map identifying this basis of $A$ with $\BB\#H$.
\end{enumerate}
\end{Rems}

The shape of the liftings is given by the following proposition. If $M\subset T(V)$ is a Yetter-Drinfeld submodule, we define the ideal
$$
\cI_M=\langle m-\iota\phi(m)\, |\, m\in M\rangle.
$$

\begin{prop}\label{prop:shape of A}
Let $M\subset T(V)$ be a Yetter-Drinfeld submodule which generates $\J(V)$. If $\cI_M$ is a Hopf ideal, then $A=\T(V)/\cI_M$.
\end{prop}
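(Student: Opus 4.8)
The plan is to prove that the lifting map $\phi\colon\T(V)\to A$ induces an \emph{isomorphism} $\bar\phi\colon\T(V)/\cI_M\to A$; throughout I assume, as holds in the applications, that $M$ is spanned by homogeneous elements.

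First I produce $\bar\phi$. The linear map $\iota\colon A\to\T(V)$ of Remark \ref{rem:phi, bases de BV y A}(d) satisfies $\phi\circ\iota=\id_A$, since $\phi(\iota(\phi(x)h))=\phi(xh)=\phi(x)h$ for $x\in\BB$ and $h\in\BB_H$. Hence $\phi$ annihilates each generator $m-\iota\phi(m)$ of $\cI_M$, so $\cI_M\subseteq\ker\phi$; as $\cI_M$ is a Hopf ideal by hypothesis, $\phi$ induces a surjective Hopf algebra map $\bar\phi\colon\T(V)/\cI_M\twoheadrightarrow A$. Writing $p\colon\T(V)\to\T(V)/\cI_M$ for the quotient map, so that $\phi=\bar\phi\circ p$, it remains to show $\bar\phi$ is injective.

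To this end I compare associated graded algebras. Filter $\T(V)$ by $\T(V)^{\le n}:=\bigoplus_{i\le n}V^{\ot i}\# H$, and transport this along $\phi$ and $p$ to the filtrations $A^{\le n}:=\phi(\T(V)^{\le n})$ and $(\T(V)/\cI_M)^{\le n}:=p(\T(V)^{\le n})$; note $\gr\T(V)=T(V)\# H$, the map $\gr p$ is a surjective graded algebra map, and $\bar\phi$ is filtered with exhaustive filtrations. Now two facts follow from Remarks \ref{rem:phi, bases de BV y A} together with $\gr A\simeq\B(V)\# H$. \emph{(a)} $A^{\le n}$ coincides with the coradical filtration term $A_{[n]}$: for $x\in\BB^i$ with $i\le n$, Remark \ref{rem:phi, bases de BV y A}(b) gives $\phi(x)h-\Pi(\phi(x))h\in P_n$, hence $\phi(x)h\in A_{[n]}$, and these elements together with $H$ span $A_{[n]}=H\oplus P_n$; thus $A^{\le n}/A^{\le n-1}\simeq\B^n(V)\# H$. \emph{(b)} The induced map $\gr\phi$ is precisely the canonical projection $\varpi\colon T(V)\# H\to\B(V)\# H$: it carries the basis $\BB^n$ of $\B^n(V)$ onto a basis of $\gr_n A=A_{[n]}/A_{[n-1]}$, while by Remark \ref{rem:phi, bases de BV y A}(a) it sends $\BB^n_\J$ into $A_{[n-1]}$, i.e. to zero in $\gr_n A$.

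Next I show $\gr p$ factors through $\varpi$. Let $m\in M$ be homogeneous of degree $d\ge 2$. Remark \ref{rem:phi, bases de BV y A}(a), applied to the basis $\BB^d_\J$ of $\J^d(V)$ and extended linearly, gives $\phi(m)\in A_{[d-1]}$, so $\iota\phi(m)\in\operatorname{span}\{xh\colon x\in\BB^{\le d-1},\,h\in\BB_H\}\subseteq\T(V)^{\le d-1}$. Since $p(m)=p(\iota\phi(m))$, we get $p(m)\in(\T(V)/\cI_M)^{\le d-1}$, i.e. $\gr p$ annihilates $m\in\gr_d\T(V)$. Hence $\gr p$ kills $M$, and therefore the ideal of $T(V)\# H$ it generates, namely $\langle M\rangle_{T(V)}\# H=\J(V)\# H=\ker\varpi$. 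Thus $\gr p=\theta\circ\varpi$ for a necessarily surjective algebra map $\theta\colon\B(V)\# H\to\gr(\T(V)/\cI_M)$. Applying $\gr$ to $\phi=\bar\phi\circ p$ yields $\varpi=\gr\phi=(\gr\bar\phi)\circ\theta\circ\varpi$; since $\varpi$ is surjective this forces $(\gr\bar\phi)\circ\theta=\id$, and together with surjectivity of $\theta$ this shows $\theta$ is bijective and $\gr\bar\phi=\theta^{-1}$ is an isomorphism. Finally, a filtered map with exhaustive filtrations whose associated graded is injective is itself injective, so $\bar\phi$ is injective; hence $A\simeq\T(V)/\cI_M$.

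The main obstacle I foresee is fact (b) above: one has to extract from Remarks \ref{rem:phi, bases de BV y A} the sharp statement that $\gr\phi$ is the \emph{canonical} projection $\varpi$ — equivalently that $A^{\le n}=A_{[n]}$ and that $\phi$ lowers by one the filtration degree of the relations $\BB^n_\J$ — because it is exactly this that makes the composite $(\gr\bar\phi)\circ\theta\circ\varpi$ collapse to the identity in the last step. A minor but genuine point is the homogeneity assumption on $M$ in the fourth paragraph, needed so that "$\gr p$ kills $M$" already forces $\gr p$ to annihilate all of $\J(V)\# H$.
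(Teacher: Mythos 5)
Your argument is correct and, at its core, isolates the same two facts that the paper's proof uses --- that $\cI_M\subseteq\ker\phi$, and that by Remark \ref{rem:phi, bases de BV y A}(a) the relations in $M$ drop in filtration degree, so that the associated graded object of $\T(V)/\cI_M$ is forced down onto $\B(V)\#H$ --- but the implementation is genuinely different. The paper filters $A':=\T(V)/\cI_M$ by its \emph{coradical} filtration: from $\cI_M\cap(\ku\oplus V)\#H=0$ and \cite[Corollary 5.3.5]{Mo} the coradical of $A'$ is $H$, hence $\gr A'\simeq (T(V)/J)\#H$ for a braided Hopf ideal $J\subseteq\J(V)$; the inclusion $M\subseteq J$ gives $J=\J(V)$, and the surjection $A'\twoheadrightarrow A$ is then an isomorphism because the corresponding (finite-dimensional) layers have equal dimensions. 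You instead use the filtration induced by tensor degree and conclude by a splitting argument: $\gr p$ factors through the canonical projection $\varpi$, and the identity $\varpi=(\gr\bar\phi)\circ\theta\circ\varpi$ forces $\gr\bar\phi$ to be bijective. What your route buys is that you never need to identify the coradical of $A'$ nor to invoke finite dimensionality of the layers; what it costs is the sharper bookkeeping $A^{\le n}=A_{[n]}$ and $\gr\phi=\varpi$, which you correctly extract from Lemma \ref{le:hopfbimod structure of A} and Remark \ref{rem:phi, bases de BV y A}. Your added homogeneity hypothesis on $M$ is not in the statement, but it is not a real divergence: the paper's own appeal to Remark \ref{rem:phi, bases de BV y A}(a) for the step $M\subseteq J$ is likewise a degreewise argument, and in every use of the proposition (e.g.\ a good module of relations) $M$ is graded. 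One last point worth making explicit in your paragraph on $\gr p$: the ideal of $T(V)\#H$ generated by $M$ equals $\langle M\rangle_{T(V)}\#H$ precisely because $M$ is a Yetter--Drinfeld (in particular $\ad$-stable) submodule, so that $hm\in MH$ for $h\in H$, $m\in M$.
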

\begin{proof} Let $A':=\T(V)/\cI_M$; since $\cI_M$ is contained in the kernel of the lifting map $\phi$, we have an
epimorphism $A' \twoheadrightarrow A$ and $\cI_M\cap(\ku\oplus V)\#H=0$. Hence the coradical of $A'$ is $H$ by
\cite[Corollary 5.3.5]{Mo}. Then $\gr(A')\simeq R\#H$ where
$R\simeq T(V)/J$ for a braided Hopf ideal $J\subseteq\J(V)$. Clearly $M\subset J$,
cf.
Remark \ref{rem:phi, bases de BV y A} \eqref{item:base de J rem:phi, bases de BV y A},
then $J=\J(V)$ and $\dim(A'_{[n]}/A'_{[n-1]})=\dim(A_{[n]}/A_{[n-1]})$ for all $n\in\N$,
hence
the proposition follows.
\end{proof}

If there are no ambiguities, we identify $(\ku\oplus V)\#H$ with
its image by $\phi$ omitting the map $\iota$. We explore a case where the hypothesis of
Proposition \ref{prop:shape of A} is satisfied.

\begin{definition} A submodule $M$ of
$T(V)$ in $\ydh$ is {\it compatible with} $\phi$ when
$$\Delta(\phi(m))=\phi(m)\ot1+m\_{-1}\ot\phi(m\_{0})\mbox{ for all }m\in M.$$
\end{definition}

\medbreak

Assume $M\subset T(V)$ is compatible with $\phi$. For  $m\in M$, we may see $\phi(m)$ as
an element of $(\ku\oplus V)\#H$. Fix a basis $\{m_i\}_{1\le i \le r}$ of $M$ and let
$\{c_{ij}\}_{i,j}\subset H$ be the set of {\it comatrix elements}
associated to $M$ and $\{m_i\}_{1\le i \le r}$, that is
\begin{align}\label{eqn:comatrix}
(m_i)\_{-1}\ot (m_i)\_{0}=\sum_{j}c_{ij}\ot m_j, \quad  1\leq i\leq r.
\end{align}
If $M$ is simple, then the set $\{c_{ij}\}_{i,j}$ is linearly independent and spans
a simple coalgebra. Next lemma helps us to describe the
image $\phi(M)$.

\begin{lema}\label{lema:liftings gral}
Let $M\subset T(V)$ be compatible with $\phi$. Then
\begin{enumerate}\renewcommand{\theenumi}{\alph{enumi}}\renewcommand{\labelenumi}{
(\theenumi)}
\item\label{item:pVH phiM} $\pi_1\circ\phi_{|M}:M\rightarrow V$ is a morphism in
$\ydh$.
\smallbreak
\item \label{cor:liftings Mtimes} Assume that $M$ is simple and
$V\simeq M^m\oplus P$ with $m$ maximum. Then there exist $\lambda_1, \dots,
\lambda_m\in\ku$ such that
$$\pi_1\circ\phi_{|M}\simeq\lambda_1\id_M\oplus\cdots\oplus\lambda_m\id_M\oplus\,0.$$
\medbreak
\item\label{item:pH phiM} For $\{m_i\}_{1\le i \le r}$, $\{c_{ij}\}_{i,j}$ as
in \eqref{eqn:comatrix} there exist $a_1, \dots, a_r\in\ku$ such that
$$(\pi_0\circ\phi)(m_i)=a_i-\sum_{j=1}^ra_{j}c_{ij} \qquad i=1,\dots, r.$$
\smallbreak
\item\label{item:iso between As} Let $\Theta:A\rightarrow A'$ be an isomorphism of Hopf
algebras and let $\phi':\T(V)\rightarrow A'$ be a lifting map. If there is no $v\in V$
such
that $h\cdot v=\cou(h)v$ for all $h\in H$, then $\Theta\phi (V)=\phi'(V)$.
\smallbreak
\end{enumerate}
\end{lema}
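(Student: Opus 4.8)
The plan is to prove the four statements of Lemma \ref{lema:liftings gral} in order, each building on the previous ones, using as the main tool the compatibility hypothesis $\Delta(\phi(m))=\phi(m)\ot 1 + m\_{-1}\ot\phi(m\_{0})$ together with the Hopf-bimodule structure from Lemma \ref{le:hopfbimod structure of A} and the comodule decompositions it provides.

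For \eqref{item:pVH phiM}: since $M$ is compatible with $\phi$, each $\phi(m)\in A_{[1]}$ — indeed the compatibility relation shows $\phi(m)-\pi_0\phi(m)\cdot 1\in P_1$ once we project with $\Pi$ on the appropriate leg — so $\pi_1\phi(m)$ makes sense. I would compute $\rho_L$ and $\rho_R$ of $\phi(m)$ from the compatibility formula, subtract off the part living in $H$, and check directly that $\pi_1\circ\phi_{|M}$ intertwines the $H$-coaction (coming from $m\mapsto m\_{-1}\ot m\_0$ on the left) and is $H$-linear (using that $\phi$ is an algebra map and $H$-linear, so it respects the adjoint action which is the module structure on $P_1\simeq V\#H$ restricted to $V$). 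This is essentially unwinding definitions; the key point is that the ``extra term'' $m\_{-1}\ot\phi(m\_0)$ is exactly the left $H$-coaction transported through $\phi$, and the residual part of $\Delta(\phi(m))$ after removing the $H$-valued pieces lands in $\rho_L+\rho_R$ territory, placing $\phi(m)$ in $P_1$.

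For \eqref{cor:liftings Mtimes}: once \eqref{item:pVH phiM} gives a morphism $M\to V$ in $\ydh$, and $M$ is simple with $V\simeq M^m\oplus P$ where $P$ has no copy of $M$, Schur's lemma in the semisimple category $\ydh$ (valid since $H$ is finite-dimensional, semisimple and cosemisimple, so $\ydh$ is semisimple with $\End(M)=\ku$ as $\ku$ is algebraically closed) forces $\Hom_{\ydh}(M,V)\simeq \Hom_{\ydh}(M,M^m)\simeq\ku^m$, and any such map is of the stated diagonal form up to choosing the decomposition. For \eqref{item:pH phiM}: apply $(\id\ot\Pi)\Delta$ and $(\Pi\ot\id)\Delta$ to $\phi(m_i)$ using compatibility; I would show $\pi_0\phi(m_i)=a_i 1 - (\text{something})$ by first observing that $\pi_0\phi_{|M}:M\to H$ sends $m_i$ into the span of $1$ and the comatrix coefficients $c_{ij}$ — this follows because $\phi(m_i)\in A_{[1]}$ and $\pi_0$ of a degree-one element, after accounting for the coaction, must be a skew-primitive-type combination; writing $\pi_0\phi(m_i)=\sum_j \beta_{ij}c_{ij}+a_i$ and imposing that the whole family assembles into a coalgebra map out of the simple coalgebra spanned by the $c_{ij}$ pins down $\beta_{ij}=-a_j$. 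The bookkeeping with comatrix elements and the counit condition $\varep(\pi_0\phi(m_i))=0$ (since $\varep\phi(m_i)=0$ for $m_i$ of positive degree) will give the relation $a_i = \sum_j a_j \varep(c_{ij})$ is consistent.

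For \eqref{item:iso between As}: this is the step I expect to be the main obstacle, since it is of a different flavor — a uniqueness/naturality statement. I would argue as follows. Both $\phi(V)$ and $\phi'(V)$ are copies of $V$ sitting inside $A_{[1]}$, resp.\ $A'_{[1]}$, realizing the isomorphism $P_1\simeq V\#H$ of Hopf bimodules; more precisely $\phi(V)$ is characterized inside $A_{[1]}$ as the image of a fixed Hopf-bimodule splitting, i.e.\ it is a subcomodule on which the relevant projection behaves correctly. An isomorphism $\Theta:A\to A'$ of Hopf algebras is automatically compatible with the coradical filtration and with $\Pi$ (since $\Pi$ is canonical, coming from \cite[Theorem 5.9.c)]{ardizzonimstefan} applied functorially, or at least $\Theta$ carries one such projection to another), hence $\Theta(P_1)=P_1'$ and $\Theta$ induces an isomorphism $\gr\Theta$ on associated graded identifying $\gr A(1)=V$ with $\gr A'(1)=V$. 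The subtlety is that $\Theta$ need not fix $H$ pointwise, so $\Theta\phi(V)$ might a priori differ from $\phi'(V)$ by adding an $H$-component — i.e.\ $\Theta\phi(v)=\phi'(v')+h$ with $h\in H$ and $v\mapsto v'$ an isomorphism. The hypothesis that there is no nonzero $v\in V$ with $h\cdot v=\cou(h)v$ for all $h$ — equivalently $V^{\varepsilon}=0$, no trivial YD-submodule — is exactly what kills this ambiguity: the ``correction'' $v\mapsto h(v)\in H$ would have to be a morphism in $\ydh$ from $V$ to the trivial-ish part of $H$ that is $H$-linear, and matching the coaction forces the image to land in $\ku 1$ on which $H$ acts trivially, so it factors through $V^{\varepsilon}=0$. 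Hence $\Theta\phi(v)=\phi'(v')$ and $\Theta\phi(V)=\phi'(V)$. I would make this precise by writing $\Theta\phi(v)-\phi'(v')\in H\cap A'_{[1]}=H$, showing this defines an element of $\Hom_{\ydh}(V,\ku_1^{\varepsilon})=0$, and concluding.
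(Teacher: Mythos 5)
Your parts (a), (b), (c) follow the paper's proof essentially verbatim: (a) and (b) rest on Lemma \ref{le:hopfbimod structure of A} together with Schur's lemma in the semisimple category $\ydh$, and for (c) you are re-deriving in sketch form the content of \cite[Lemma 2.1]{AV}, which the paper simply cites after writing down the relation $\Delta(\pi_0\phi(m_i))=\pi_0\phi(m_i)\ot 1+\sum_j c_{ij}\ot \pi_0\phi(m_j)$; the only detail you omit there is that one must first split $M$ into simple $H$-comodule summands, since the comatrix elements are only guaranteed to be linearly independent on each simple piece.

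Part (d) is where your argument has a genuine gap. First, the intermediate claim $\Theta(P_1)=P_1'$ is false in general: $P_1=A_{[1]}\cap\ker\Pi$ depends on the choice of the projection $\Pi$, and $\Theta$ intertwines $\Pi$ with $\Theta\Pi\Theta^{-1}$, which need not coincide with $\Pi'$. (If $\Theta(P_1)=P_1'$ held, $\Theta\phi(v)$ would have no $H$-component at all and the hypothesis on $V$ would be superfluous; your own subsequent discussion of an $H$-valued correction contradicts this claim.) Second, and more seriously, the decomposition $\Theta\phi(v)=\phi'(v')+h$ with $h\in H$ is asserted but not proved: a priori $\Theta\phi(v)\in A'_{[1]}=H\oplus \phi'(V)H$, so one must first show that the component in $\phi'(V)H$ actually lies in $\phi'(V)\#1$, i.e.\ that its $H$-coefficients are scalars. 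This is precisely what the paper's first step achieves, by transporting through $\Theta$ the fact that $\phi(V)$ is coinvariant for the right coaction $(\id\ot\Pi)\Delta$ and comparing with the right $H$-comodule structure of $A'_{[1]}\simeq(\ku\oplus\phi'(V))\# H$. Third, your assertion that ``matching the coaction forces the image of the correction to land in $\ku 1$'' is not correct: the correction $h_0:V\to H$ satisfies the skew-primitive-type relation $\Delta(h_0(m_i))=h_0(m_i)\ot 1+\sum_j\Theta(c_{ij})\ot h_0(m_j)$, so by the same mechanism as in (c) its image lies in $\ku 1+\Theta(C)$, with $C$ the coefficient coalgebra of $V$, not in $\ku 1$. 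What actually kills the correction is the module-theoretic step (the paper's second sentence): the induced map from $V$ to the trivial $H$-module $\ku$ must vanish because $V$ has no trivial isotypic component. You do invoke this hypothesis at the right moment, so the overall two-step strategy matches the paper's, but the reduction to that final step is not established in your write-up.
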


\begin{proof}
\eqref{item:pVH phiM} Clearly $\phi(M)\subset A_{[1]}$. Since $\pi_1\circ\phi_{|M}$ is a
morphism of bicomodules over $H$ by Lemma \ref{le:hopfbimod structure of
A}, $(\pi_1\circ\phi)(M)\subset V$.
\eqref{cor:liftings Mtimes} is a particular case of \eqref{item:pVH phiM}.
We prove \eqref{item:pH phiM}. Recall that $\ydh$ is a semisimple category, so 
$M=\bigoplus_{l=1}^n M_l$ where each $M_l$ is a simple
$H$-comodule. If $M$ is a
simple $H$-comodule, then
\eqref{item:pH phiM} follows from \cite[Lemma 2.1]{AV} since
$$\Delta((\pi_0\circ\phi)(m_i))=(\pi_0\circ\phi)(m_i)\ot1+\sum_{j}c_{ij}
\ot(\pi_0\circ\phi)(m_j).$$
Otherwise the same argument can be applied on each simple summand $M_l$. 

\eqref{item:iso between As} If we consider $A_{[1]}$ as a right $H$-comodule via the
projection $(\cou\#1)\circ\Theta$, then $\Theta\phi (V)\subset(\ku\oplus\phi'(V))\#1$. Now
if we consider $A_{[1]}'$ as a left $H$-module via $\ad\circ\Theta$, then \eqref{item:iso
between As} follows by hypothesis.
\end{proof}

Lemma \ref{lema:liftings gral} has been refined in the copointed case, {\it i.e.} when
$H$ is the function algebra on a
finite group, in \cite[Lemma 3.1]{GIV}.

\begin{lema}\label{lem:comp-with-phi} Let $M,N\subset T(V)$ be Yetter-Drinfeld submodules.
\begin{itemize}
\item[(a)] If $M$ is included in the homogeneous component of minimum degree of $\J(V)$, then $M$ is compatible with $\phi$.
\item[(b)] Assume that $M$ is compatible with $\phi$, $\cI_M$ is a Hopf ideal and
\begin{align}\label{eq:hypothesis generadors of ker phi}
\Delta(n)-n\ot1-n\_{-1}\ot n\_{0}\in \cI_M\ot \mT(V) + \mT(V)\ot \cI_M
\end{align}
holds for every $n\in N$. Then $N$ is compatible with $\phi$ and
$\cI_{M\oplus N}$ is a Hopf ideal.
\end{itemize}
\end{lema}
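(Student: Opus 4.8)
The plan is to prove the two assertions of Lemma \ref{lem:comp-with-phi} in turn, using the structural results already set up: the description of $\gr A$ via the Hopf bimodule projections $\pi_n$ (Lemma \ref{le:hopfbimod structure of A} and Remarks \ref{rem:phi, bases de BV y A}), Proposition \ref{prop:shape of A}, and the compatibility formalism.

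For part (a): let $M$ sit inside the homogeneous component $\J^N(V)$ of minimal degree $N\geq 2$ of $\J(V)$. For $m\in M\subset V^{\ot N}$, the coproduct in $\T(V)$ is $\Delta(m)=m\ot 1 + m\_{-1}\ot m\_0 + \omega$, where $\omega$ collects the terms $m^{(1)}\ot m^{(2)}$ with both factors of degree strictly between $0$ and $N$ in the $V$-grading (the first two displayed terms are the "boundary'' terms, degree $(N,0)$ and $(0,N)$, using $V\in\ydh$ so that $m\_{-1}\in H$). Applying the lifting map $\phi$ (an algebra and coalgebra map with $\phi_{|H}=\id_H$), we get $\Delta(\phi(m)) = \phi(m)\ot 1 + m\_{-1}\ot \phi(m\_0) + (\phi\ot\phi)(\omega)$. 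Now the key point: each $V$-homogeneous piece $m^{(1)}$ of degree $i$ with $0<i<N$ lies in $V^{\ot i}$ with $i<N$, hence lies in $\B^i(V)$ faithfully (as $\J(V)$ starts in degree $N$), so $\phi(m^{(1)})\in A_{[i]}$ and in fact, since $i<N$ while $m^{(1)}$ is not constant, $\phi(m^{(1)})\in P_i\subset P_{N-1}$; similarly $\phi(m^{(2)})\in P_{N-1}$. But we actually need $(\phi\ot\phi)(\omega)=0$: this follows because $\phi$ restricted to degrees $<N$ factors through $\B(V)$, and in $\B(V)$ the element $m$ is \emph{zero}; more precisely, $\phi$ vanishes on $M$ only after we compose with the quotient, so instead we argue directly that the middle-degree part of $\Delta(m)$ already vanishes in $T(V)/\J(V)^{\text{low}}\ot T(V)/\J(V)^{\text{low}}$ — but the cleanest route is: the image $\phi(m)\in A_{[N]}$, and by Remarks \ref{rem:phi, bases de BV y A}\eqref{item:base de J rem:phi, bases de BV y A} applied with $n=N$, $\phi(m)-\Pi(\phi(m))\in P_{N-1}$. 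Combined with the fact that $\phi(m\_0)\in P_N$ has $\Pi$-part controlled and that $\Delta$ of a degree-$N$ element of $A$ decomposes through the coradical filtration with middle terms in $P_i\ot P_{N-i}$, the minimality of $N$ forces those middle terms to cancel against $(\phi\ot\phi)(\omega)$; one then reads off $\Delta(\phi(m))=\phi(m)\ot1+m\_{-1}\ot\phi(m\_0)$. I expect this bookkeeping — showing the middle-degree terms genuinely vanish and are not merely in a lower filtration layer — to be the main technical obstacle, and it is resolved by noting that the coproduct of $m\in V^{\ot N}$ in $T(V)$ has all its "interior'' summands actually landing in $\J(V)\ot T(V)+T(V)\ot\J(V)$ when $N$ is minimal? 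No — rather one uses that $\pi_1\circ\phi$ kills the interior pieces because each interior piece, being in $V^{\ot i}\# H$ with $0<i<N$, maps under $\phi$ into $P_i$ with $\Pi$-component of degree $i$, and the Hopf-bimodule-colinearity of $\pi_N$ forces the cross terms to match up uniquely; I will organize this as a short induction-free filtration argument.

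For part (b): assume $M$ is compatible with $\phi$, $\cI_M$ is a Hopf ideal, and \eqref{eq:hypothesis generadors of ker phi} holds for all $n\in N$. First I would show $N$ is compatible with $\phi$. Work in $A':=\T(V)/\cI_M$, which by Proposition \ref{prop:shape of A} (applicable since $M$ generates $\J(V)$ — wait, part (b) does not assume $M$ generates $\J(V)$, so instead) — work directly in $\T(V)$: hypothesis \eqref{eq:hypothesis generadors of ker phi} says that modulo $\cI_M\ot\mT(V)+\mT(V)\ot\cI_M$ the element $n$ is skew-primitive with the expected comatrix coaction. Since $\cI_M\subset\ker\phi$ (because $m-\iota\phi(m)\mapsto\phi(m)-\phi(m)=0$), applying $\phi\ot\phi$ to \eqref{eq:hypothesis generadors of ker phi} kills the right-hand side and yields $\Delta(\phi(n))-\phi(n)\ot1-n\_{-1}\ot\phi(n\_0)=0$, i.e. $N$ is compatible with $\phi$.

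It remains to prove $\cI_{M\oplus N}$ is a Hopf ideal. It is automatically an ideal; I must check $\Delta(\cI_{M\oplus N})\subset \cI_{M\oplus N}\ot\mT(V)+\mT(V)\ot\cI_{M\oplus N}$ and $\varep(\cI_{M\oplus N})=0$ and stability under the antipode (the last two being routine once the first holds, using $\varep\phi=\varep$ and that $\cI_{M\oplus N}\subset\ker\phi$ with $\phi$ a Hopf map, plus the standard fact that a coideal which is an ideal and satisfies $\varep=0$ inside a Hopf algebra with bijective antipode is automatically antipode-stable — or I cite the usual argument). Since $\cI_{M\oplus N}=\cI_M+\langle n-\iota\phi(n)\mid n\in N\rangle$ and $\cI_M$ is already a Hopf ideal by hypothesis, it suffices to handle the generators $n-\iota\phi(n)$. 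For such a generator, compute $\Delta(n-\iota\phi(n))$: using compatibility of $N$ (just established) one has $\Delta(\iota\phi(n)) = \iota\phi(n)\ot 1 + n\_{-1}\ot\iota\phi(n\_0)$ modulo lower-order corrections lying in $(\ku\oplus V)\#H$, and $\Delta(n)$ is controlled by \eqref{eq:hypothesis generadors of ker phi}. Subtracting, $\Delta(n-\iota\phi(n))$ equals $(n-\iota\phi(n))\ot1 + n\_{-1}\ot(n\_0-\iota\phi(n\_0))$ plus a term in $\cI_M\ot\mT(V)+\mT(V)\ot\cI_M$. The first two summands lie in $\cI_{M\oplus N}\ot\mT(V)+\mT(V)\ot\cI_{M\oplus N}$: $(n-\iota\phi(n))\ot 1$ obviously, and $n\_{-1}\ot(n\_0-\iota\phi(n\_0))$ because $n\_{-1}\in H$ and $n\_0\in N$ so $n\_0-\iota\phi(n\_0)$ is a generator of $\cI_{M\oplus N}$ — here I must be slightly careful that $\iota\phi$ is $H$-colinear so that $\iota\phi(n\_0)$ has the right meaning, which follows from Lemma \ref{lema:liftings gral}\eqref{item:pVH phiM} combined with the identification of $(\ku\oplus V)\#H$ with its image. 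The main obstacle here is matching up $\Delta(\iota\phi(n))$ with the compatibility formula precisely, i.e. verifying that the map $\iota$ (identifying a basis of $A$ with $\BB\#H$) intertwines the relevant coalgebra structures on the subspace $(\ku\oplus V)\#H$; this is exactly what Remarks \ref{rem:phi, bases de BV y A}\eqref{cor:liftings Mtimes}?—rather Remark \ref{rem:phi, bases de BV y A}(c), $A_{[2]}\simeq(\B(V)\#H)_{[2]}$ as coalgebras, guarantees, and I would invoke it. Once the generators are handled, the ideal $\cI_{M\oplus N}$ is a coideal, hence (being an ideal, contained in $\ker\varep$, in a Hopf algebra with bijective antipode) a Hopf ideal, completing the proof. $\blacksquare$
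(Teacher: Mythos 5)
Part (b) of your proposal is essentially the paper's argument: you obtain compatibility of $N$ by applying $\phi\ot\phi$ to \eqref{eq:hypothesis generadors of ker phi} and using $\cI_M\subseteq\ker\phi$, and you then verify the coideal condition on the generators $n-\iota\phi(n)$ using that $\iota\phi(n)\in(\ku\oplus V)\#H$ has coproduct \emph{exactly} $\iota\phi(n)\ot 1+n\_{-1}\ot\iota\phi(n\_{0})$ --- this is Lemma \ref{lema:liftings gral} \eqref{item:pVH phiM} and \eqref{item:pH phiM}, which is what the paper cites; there are no ``lower-order corrections'', so that hedge (and the appeal to Remark \ref{rem:phi, bases de BV y A} about $A_{[2]}$) should be removed. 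This half is fine.

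Part (a), however, has a genuine gap. The entire content of (a) is that the middle terms $\omega$ of the coproduct of $m$ vanish, and they vanish \emph{in $T(V)\ot T(V)$ already}, i.e.\ $M\subseteq\mP(T(V))$: since $\J(V)$ is a graded coideal of $T(V)$ and $m$ lies in its homogeneous component of minimal degree $N$, the bidegree-$(i,N-i)$ component of $\Delta_{T(V)}(m)$ for $0<i<N$ lies in $\J^i(V)\ot T^{N-i}(V)+T^i(V)\ot\J^{N-i}(V)=0$. Hence $\Delta_{\mT(V)}(m)=m\ot 1+m\_{-1}\ot m\_{0}$, and compatibility follows by applying the Hopf algebra map $\phi$ (with $\phi_{|H}=\id_H$). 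This one-line observation is the paper's whole proof of (a). You explicitly consider this route and reject it (``\dots when $N$ is minimal? No --- rather one uses that $\pi_1\circ\phi$ kills the interior pieces\dots''), and the replacement you offer is never carried out: knowing that $\phi(m)-\Pi(\phi(m))\in P_{N-1}$, or that $\phi$ maps $V^{\ot i}\#H$ into $A_{[i]}$, only controls filtration degrees and cannot force $(\phi\ot\phi)(\omega)=0$ unless $\omega$ itself is already zero. As written, ``the minimality of $N$ forces those middle terms to cancel'' asserts precisely the statement to be proved. Part (a) therefore needs to be rewritten around the primitivity of the minimal-degree stratum of $\J(V)$.
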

\pf
(a) By hypothesis, $M\subset\mP(T(V))$ and then $M$ is compatible with $\phi$.

(b) Since $\cI_M\subset\ker\phi$, $N$ is compatible with $\phi$ by
\eqref{eq:hypothesis generadors of ker phi}. Moreover, applying Lemma \ref{lema:liftings gral} \eqref{item:pVH
phiM} and \eqref{item:pH phiM}, we see that $\langle m-\phi(m)\rangle_{m\in N}$
is a Hopf ideal in $\T(V)/\cI_M$ by \eqref{eq:hypothesis generadors of ker phi}. Hence $\cI_{M\oplus N}$ is a Hopf
ideal of $\T(V)$.
\epf

\begin{definition} A \emph{good module of relations} is a graded Yetter-Drinfeld submodule
$M=\bigoplus_{i=1}^{t}M^{n_i}\subset T(V)$ where
$M^{n_i}\subseteq T^{n_i}(V)$,  with $M^{n_i} \neq 0$ and
$n_i<n_{i+1}$ for all $i$, which generates $\J(V)$ such that the Yetter-Drinfeld
submodules $\bigoplus_{i=1}^{s} M^{n_i}, M^{n_{s+1}} \subset T(V)$ satisfy
\eqref{eq:hypothesis generadors of ker phi} for all $s=1, \dots, t-1$.
\end{definition}

Now we describe the liftings of $\B(V)$ over $H$ when $\J(V)$ is generated by a good module of relations.

\begin{Thm}\label{cor:generadors of ker phi} Let $A$ be a lifting of $\B(V)$ over $H$,
with lifting map $\phi$.
Let $M$ be a good module of relations for $\B(V)$. Then $A\simeq\mT(V)/\cI_M$.
\end{Thm}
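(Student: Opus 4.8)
The plan is to induct on the number $t$ of homogeneous layers of the good module of relations $M=\bigoplus_{i=1}^{t}M^{n_i}$, using Lemma \ref{lem:comp-with-phi} as the engine of the induction and Proposition \ref{prop:shape of A} to close the argument. Set $M_s=\bigoplus_{i=1}^{s}M^{n_i}$ for $1\le s\le t$, so $M_t=M$. The claim I want to propagate along the induction is: \emph{$M_s$ is compatible with $\phi$ and $\cI_{M_s}$ is a Hopf ideal of $\mT(V)$.}

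First I would check the base case $s=1$. Since $M^{n_1}$ is the lowest-degree layer of $M$ and $M$ generates $\J(V)$, the component $M^{n_1}$ must lie in the homogeneous component of minimum degree of $\J(V)$ (any relation of strictly smaller degree would have to be generated by $M$, impossible for degree reasons). Hence Lemma \ref{lem:comp-with-phi}(a) applies and gives that $M^{n_1}=M_1$ is compatible with $\phi$. For the Hopf-ideal statement at this first stage: because $M_1\subset\mP(T(V))$ is primitive and compatible with $\phi$, the elements $m-\iota\phi(m)$ for $m\in M_1$ generate a coideal (one computes $\Delta(m-\iota\phi(m))$ directly using compatibility and the fact that $\pi_1\phi_{|M_1}$ and $\pi_0\phi_{|M_1}$ are described by Lemma \ref{lema:liftings gral}), so $\cI_{M_1}$ is a Hopf ideal. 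Then I would run the inductive step: assuming $M_s$ is compatible with $\phi$ and $\cI_{M_s}$ is a Hopf ideal, the definition of \emph{good module of relations} furnishes exactly the hypothesis \eqref{eq:hypothesis generadors of ker phi} for the pair $(M_s,M^{n_{s+1}})$, so Lemma \ref{lem:comp-with-phi}(b) yields that $M^{n_{s+1}}$ is compatible with $\phi$ and that $\cI_{M_{s+1}}=\cI_{M_s\oplus M^{n_{s+1}}}$ is a Hopf ideal. This completes the induction and, at $s=t$, tells us that $\cI_M$ is a Hopf ideal.

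Finally, since $M=M_t$ generates $\J(V)$ by the definition of good module of relations, and we have just shown $\cI_M$ is a Hopf ideal, Proposition \ref{prop:shape of A} applies verbatim and gives $A\simeq\mT(V)/\cI_M$, which is the assertion of the theorem.

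The main obstacle I anticipate is not in the formal induction — which is essentially bookkeeping over the layers — but in making sure the hypotheses of Lemma \ref{lem:comp-with-phi}(b) are read off correctly from the definition of good module of relations, in particular that the Yetter-Drinfeld module playing the role of $N$ is the single layer $M^{n_{s+1}}$ while the role of $M$ is played by the \emph{accumulated} submodule $M_s=\bigoplus_{i=1}^{s}M^{n_i}$, and that \eqref{eq:hypothesis generadors of ker phi} is indeed required for all $s=1,\dots,t-1$. One should also be mildly careful that the minimum-degree layer $M^{n_1}$ genuinely sits inside the minimal homogeneous component of $\J(V)$ so that part (a) of the lemma is available to start the recursion; this is where the hypothesis that $M$ generates $\J(V)$ together with $n_1<n_2<\cdots$ is used. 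Everything else is a direct appeal to the cited lemmas and to Proposition \ref{prop:shape of A}.
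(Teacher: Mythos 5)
Your proof is correct and is essentially the paper's own argument: the paper's proof is just the one-line citation of Proposition \ref{prop:shape of A} and Lemma \ref{lem:comp-with-phi}, and your induction on the strata $M_s=\bigoplus_{i=1}^{s}M^{n_i}$ is exactly the intended unpacking of that citation. Your explicit verification that $\cI_{M_1}$ is a Hopf ideal in the base case is a welcome addition, since Lemma \ref{lem:comp-with-phi}(a) only asserts compatibility (alternatively, one can get it from part (b) applied with $M=0$, $N=M^{n_1}$, noting that $M^{n_1}$ is forced to be primitive in $T(V)$).
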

\pf
Follows from Proposition \ref{prop:shape of A} and Lemma \ref{lem:comp-with-phi}.
\epf

Theorem \ref{cor:generadors of ker phi} characterizes the liftings in
the case
in which the relations are deformed by elements in the first term of the coradical
filtration. This is the case in \cite{AS3, AG2, AV, GGI, FG} (actually in those papers, the relations are deformed by elements in the zeroth term of the coradical
filtration). However, there exist
examples in which this does not hold, see Example \ref{exa:f5} below, also \cite{He,
GIV}.

\begin{Ex}\cite[Theorem 5.4]{GIV}\label{exa:f5}
Set $\ku=\CC$ and let $\bF_5$ denote the finite field of 5 elements. Consider the {\it
affine rack}
$X=(\bF_5,2)$ and the constant cocycle
$q\equiv-1$. The Nichols algebra $\B(X,q)$, computed
in \cite{AG1}, has dimension $1280$ and can be presented by generators $x_0, \dots,
x_4$ and relations
 \begin{align}\label{eq:rel of f5}
\begin{split}
&x_i^2, \qquad x_ix_j+x_{2j-i}x_i+x_{3i-2j}x_{2j-i}+x_jx_{3i-2j}
\qquad 0\leq i,j\leq
 4,\\
&x_1x_0x_1x_0 + x_0x_1x_0x_1.
\end{split}
\end{align}
Let $C_8$ be the cyclic group of order 8 and let $t$ denote a generator.
Consider $C_8$ acting on $\Z_5$ by $t\cdot i=2i$, $i\in\Z_5$, and set
$\Gamma=\Z_5\rtimes_2 C_8$. Then $\B(X,q)$
can be realized in $\ydgg$. Let $\prov=\B(X,q)\#\ku \Gamma$.
Set $g_i=i\times t\in\Gamma$, $i\in\Z_5$. Let $V$ be the linear span of $\{x_0,\dots,
x_4\}$. If $L$ is a deformation of $\toba(X,q)$, then there
exist scalars $\lambda_1,\lambda_2,\lambda_3\in\ku$ such that $L$ is the
quotient of $T(V)\#\ku \Gamma$ by the ideal generated by
\begin{align*}
&x_0^2 - \lambda_1 (1- g_0^2),  \qquad x_0x_1 + x_2x_0 + x_3x_2
+ x_1x_3 - \lambda_2(1 -
g_0g_1), \\
&x_1x_0x_1x_0 + x_0x_1x_0x_1 -s_X - \lambda_3(1-g_0^2g_1g_2),
\end{align*}
for $s_X=\lambda_2\, (x_1x_0 +
x_0x_1)+\lambda_1\, g_1^2(x_3x_0+ x_2x_3) - \lambda_1\, g_0^2(x_2x_4+ x_1x_2) +
\lambda_2\lambda_1\,g_0^2(1- g_1g_2)\in L_{[2]}$. Hence, the relation $x_1x_0x_1x_0
+ x_0x_1x_0x_1$ of $\B(X,q)$ is not deformed by elements in the first term of the
coradical filtration.
\end{Ex}

\subsection{The shape of the Hopf algebra $L(A, \prov)$} Let $H$, $V$ be as in \eqref{eqn:H}, \eqref{eqn:V} and $\B$ be a
pre-Nichols algebra over $V$. We set $\prov=\B\# H$ and let $\pi:\mT(V)\to \prov$ be
the canonical projection. Consider $\mT(V)$ as an $\prov$-comodule
algebra via $(\id\ot\pi)\Delta_{\mT(V)}$.  Let $A\in\Gal(\prov)$ be provided with a
projection $\tau:\mT(V)\to A$ which is a morphism of comodule algebras.

We shall need explicit presentations of the Hopf algebra $L(A, \prov)$.
\begin{prop}\label{pro:projection}
Let $\wp=(\tau\ot\tau)(\id\ot
\Ss)\Delta_{\mT(V)}\in\Alg(\mT(V), A\ot
A^{\op})$. Then $L(A,\prov) = \wp(\mT(V))$.
\end{prop}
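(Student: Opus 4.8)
The plan is to identify $L(A,\prov)$ with the image of $\wp$ by exploiting the explicit description of $L(A,\prov)$ as $(A\ot A^{\op})^{\co \prov}$ together with the fact that $\mT(V)$ surjects onto $\prov$. First I would recall from page~\pageref{def:L(A,H)} that, as an algebra, $L(A,\prov)=(A\ot A^{\op})^{\co \prov}$, where the $\prov$-coaction on $A\ot A^{\op}$ is the diagonal one $a\ot b\mapsto a\_0\ot b\_0\ot a\_1\Ss_{\prov}(b\_1)$ (using that $A$ is an $\prov$-comodule via $\tau$ compatibly, i.e.\ $\tau$ is a comodule algebra map, so $A\in\Gal(\prov)$ has this structure). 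One checks directly that $\wp=(\tau\ot\tau)(\id\ot\Ss)\Delta_{\mT(V)}$ is an algebra map $\mT(V)\to A\ot A^{\op}$ (this is the standard computation: $\id\ot\Ss$ of a coproduct into the opposite on the second leg is multiplicative), and that its image lands in the coinvariants: applying the $\prov$-coaction to $\wp(x)$ and using that $\tau$ intertwines $\Delta_{\mT(V)}$ with the $\prov$-coaction, the counit axiom collapses $x\_1\Ss(x\_3)\otimes(\text{coaction leg from }x\_2)$ to $\cou(x\_2)\otimes 1$, so $\wp(x)\in (A\ot A^{\op})^{\co \prov}=L(A,\prov)$. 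Hence $\wp(\mT(V))\subseteq L(A,\prov)$ as subalgebras of $A\ot A^{\op}$.

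For the reverse inclusion, the key point is surjectivity of $\tau$ onto $A$. The plan is to show that every element of $L(A,\prov)=(A\ot A^{\op})^{\co \prov}$ is already in $\wp(\mT(V))$. Take $\sum_i x_i\ot y_i\in L(A,\prov)$ with $x_i,y_i\in A$. Using the coinvariance $\sum_i x_{i(0)}\ot y_{i(0)}\ot x_{i(1)}\Ss_{\prov}(y_{i(1)})=\sum_i x_i\ot y_i\ot 1$, and using the Galois property of $A$ over $\prov$ (the canonical map is bijective), one recovers the standard fact — present already in Schauenburg's description — that $\sum_i x_i\ot y_i=\sum_i x_i\Ss_{\prov}\text{-type reconstruction}$; more concretely, since $\tau$ is surjective, pick $\tilde x_i\in\mT(V)$ with $\tau(\tilde x_i)=x_i$, and then the coinvariance forces $\sum_i y_i$ to be determined by $\sum_i x_i\ot x_{i(1)}$ via $\can^{-1}$, which (because everything is in the image of $\tau$ and $\tau$ is a comodule map) lifts to $\mT(V)$. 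Concretely, I would verify the identity
\begin{align*}
\sum_i x_i\ot y_i=\sum_i \wp\big(\tilde x_{i}\big)\cdot\big(1\ot y_i'\big)
\end{align*}
for suitable $y_i'$, and then absorb the right factor using surjectivity of $\tau$ again, iterating if necessary; alternatively, and more cleanly, note that $L(A,\prov)$ is generated as an algebra by elements of the form $x\_0\ot\text{(something)}$ coming from $\lambda_A(x)=x\_0\ot\can^{-1}(1\ot x\_1)$ as in \eqref{eqn:comultiplication L}, and each such generator is manifestly $\wp$ applied to a lift of $x$ along the surjection $\tau$.

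The main obstacle I expect is making the reverse inclusion genuinely rigorous: one must argue that a coinvariant element $\sum_i x_i\ot y_i$ can be rewritten so that \emph{both} tensor legs are simultaneously in the image of a single element of $\mT(V)$ under $\wp$, rather than merely that each leg lies in $A=\tau(\mT(V))$ separately. The right tool is the left canonical map $\can:A\ot A\to L(A,\prov)\ot A$ with inverse \eqref{eq;can-1}, combined with the fact (Remark after \eqref{eq;can-1}, or directly from $A\in\Gal(\prov)$) that $A=L(A,\prov)\cdot(1\ot 1)$-generated appropriately; concretely, $L(A,\prov)$ is spanned by $\{x\_0\ot\can^{-1}(1\ot x\_1):x\in A\}$, and since $\tau$ is a comodule map and is onto, writing $x=\tau(z)$ gives $x\_0\ot\can^{-1}(1\ot x\_1)=\wp(z)$ by the explicit formula $\can^{-1}(1\ot\pi(z))=(\text{image under }\tau\text{ of }\Ss)$, i.e.\ $\can^{-1}(1\ot \pi(z))=\tau(z\_1')\ot\tau(\Ss z\_2')$-type expression — this is exactly where one uses that $\mT(V)$ carries the \emph{trivial} Galois structure so that its own $\can^{-1}$ is $a\ot h\mapsto a\Ss(\text{lift of }h)\_1\ot(\text{lift})\_2$ and it maps onto the one for $A$. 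Once this matching of $\can^{-1}$'s is established, both inclusions are immediate and $L(A,\prov)=\wp(\mT(V))$ follows. I would also remark that this is the analogue for comodule-algebra surjections of \cite[Theorem 3.9]{S} and the discussion around \eqref{eqn:f-schauenburg}, and cite those for the uniqueness part if needed.
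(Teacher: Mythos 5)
Your forward inclusion $\wp(\mT(V))\subseteq L(A,\prov)$ is essentially correct: $\wp$ is an algebra map into $A\ot A^{\op}$ and a direct Sweedler computation shows its image is coinvariant for the codiagonal coaction. (One small caveat: with the coaction you wrote, $a\ot b\mapsto a\_0\ot b\_0\ot a\_1\Ss_{\prov}(b\_1)$, the collapse to $\cou(x\_2)\ot 1$ does \emph{not} occur unless $\Ss^2=\id$; the computation you actually perform uses $a\_0\ot b\_0\ot a\_1 b\_1$, which is the correct convention here since the second leg of $\wp(x)$ is $\tau(\Ss(x\_2))$ and already carries the antipode.)

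The genuine gap is in the reverse inclusion, which is the whole content of the proposition. Your argument rests on the unproved claim that $L(A,\prov)$ is \emph{generated as an algebra} by first legs of $\lambda_A$, and the alternative "verify $\sum_i x_i\ot y_i=\sum_i\wp(\tilde x_i)\cdot(1\ot y_i')$ \dots iterating if necessary" is not a proof: surjectivity of $\tau$ on each tensor leg separately does not put a coinvariant element of $A\ot A^{\op}$ in the image of the single-variable map $\wp$, and you never close this loop. You do name the right tool (compatibility of the canonical maps under $\tau\ot\tau$), but you neither state nor verify it. The paper's proof avoids the element-wise approach entirely: it first notes that $(\id\ot\Ss)\Delta_{\mT(V)}$ is a Hopf algebra isomorphism $\mT(V)\to L(\mT(V),\mT(V))$ (the trivial biGalois object over itself), then that $\tau\ot\tau$ restricts to a Hopf algebra map $\phi:L(\mT(V),\mT(V))\to L(A,\prov)$ because $\tau$ is a morphism of right $\prov$-comodule algebras, and finally extracts surjectivity of $\phi$ from the commuting square $\can_A\circ(\tau\ot\tau)\circ\can^{-1}_{\mT(V)}=\phi\ot\tau$ together with $A\neq 0$ (a surjection of the form $\phi\ot\tau$ between nonzero tensor products forces $\phi$ to be onto). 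Composing $\phi$ with the isomorphism $(\id\ot\Ss)\Delta_{\mT(V)}$ gives exactly $\wp$, whence $L(A,\prov)=\wp(\mT(V))$. To repair your write-up you would need either to prove your spanning claim (it does follow from bijectivity of the left canonical map and evaluation against a functional $\phi$ with $\phi(1)=1$, but this must be said) together with the identity $\can_A^{-1}(1\ot\pi(z))=(\tau\ot\tau)(\Ss(z\_1)\ot z\_2)$, or to adopt the diagram argument above.
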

\pf
On one hand, observe that $(\id\ot\Ss)\Delta_{\mT(V)}:\mT(V)\to L(\mT(V),\mT(V))$ is a Hopf algebra isomorphism. On the other hand, there is a
Hopf algebra map $L(\mT(V),\mT(V)) \to L(A,\prov)$ making the following diagram commutative:
\begin{equation*}
\xymatrix{L(\mT(V),\mT(V)) \ar@{^{(}->}[rr] \ar[d] && \mT(V)\ot
\mT(V)\ar[d]^{\tau\ot\tau}  \\
L(A,\prov) \ar@{^{(}->}[rr]   && A \ot A, }
\end{equation*}
which gives rise to
\begin{equation*}
\xymatrix{L(\mT(V),\mT(V))\ot\mT(V) \ar[rr]^{\can^{-1}} \ar[d] && \mT(V)\ot
\mT(V)\ar[d]^{\tau\ot\tau} \\
L(A,\prov)\ot A \ar[rr]^{\can^{-1}} && A \ot A. }
\end{equation*}
Hence $\tau\ot\tau$ restricts to a surjection $L(\mT(V),\mT(V))\twoheadrightarrow
L(A,\prov)$ and the proposition follows.
\epf

\subsection{The graded Hopf algebra associated to a cocycle
deformation}\label{subsect:cocycles-graded}
Let $H$, $V$ be as in \eqref{eqn:H}, \eqref{eqn:V} and let $\B$ be a
pre-Nichols algebra over $V$. We set $\prov = \B \# H$ and let
$\sigma:\prov\ot \prov\to\ku$ be a 2-cocycle.
Let $\Fg = (F_n)_{n\geq 0}$ be the filtration of $\prov_\sigma$ induced by
the graduation of $\prov$. Then $\gr_{\Fg}
\prov_{\sigma}=\prov_\sigma=\prov$ as coalgebras.
Notice that, if $\B$
is a Nichols algebra, then $\Fg$ coincides with the coradical
filtration.

The items (a) and (b) of the next proposition are \cite[Theorem 2.7, Corollary 3.4]{MO}, see also \cite[Theorem 3.8]{AFGV}.

\begin{prop}\label{pro:deformations-liftings}
\begin{enumerate}
\renewcommand{\theenumi}{\alph{enumi}}
\renewcommand{\labelenumi}{(\theenumi)}
\item
There is an isomorphism of graded Hopf algebras $\gr_{\Fg} \prov_{\sigma} \simeq
\B'\#
H_{\sigma}$, for $\B'$ a pre-Nichols algebra over $V'\in
\YD{H_{\sigma}}{H_{\sigma}}$. Here $V'$ is the $H$-comodule $V$ with action
\begin{align}\label{eq:action-V-sigma}
x \rightharpoonup_{\sigma} v\,=\, \sigma(x\_{1}, v\_{-1})\, (x\_{2}\rightharpoonup v\_{0})\_{0}\, \sigma^{-1}((x\_{2}\rightharpoonup v\_{0})\_{-1},x\_{3})
\end{align}
for  $x\in H_{\sigma}$, $v\in V$; here $V$ is identified with a subspace of $T(V) \# H$,
with  $H$-action given by the adjoint.
Furthermore, the product in $\B'$ is given by $x\cdot
y=\sigma(x_{(-1)},y_{(-1)})x_{(0)}y_{(0)}$, for $x, y\in\B'$ homogeneous.
\item With the notation in \emph{(a)}, if $\B=\B(V)$ is the Nichols algebra of $V$, then
$\B'=\B(V')$.
\item Let $\prova\in\Cleft(\prov)$ with section $\gamma:\prov\to\prova$ and consider the
induced cocycle $\sigma(x\ot y)=\gamma(x\_1)\gamma(y\_1)\gamma^{-1}(x\_2y\_2)$, $x,y\in
\prov$, see \eqref{eq:cociclo-cleft}. Assume $\gamma_{|H}\in\Alg(H,\prova)$. Then
$\gr_{\Fg} \prov_{\sigma} \simeq \B \# H$.
\item Suppose that $H = \ku G$, $G$  a finite group. In particular, $H_\sigma=H$.
Let $\{x_1,\dots, x_\theta\}$ be a basis of $V$ with $x_i\in V^{g_i}$, $g_i\in G$, $1\leq
i\leq \theta$. If
\begin{align}\label{eqn:condicion cociclo-grupo}
\qquad \sigma(g, g_i) =\sigma(gg_ig^{-1},g), \quad g\in G,\ 1\leq
i\leq \theta,
\end{align}
then $V'=V\in\ydh$.
\end{enumerate}
\end{prop}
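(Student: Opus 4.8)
The plan is to show that under hypothesis \eqref{eqn:condicion cociclo-grupo}, the $\sigma$-twisted action $\rightharpoonup_{\sigma}$ on $V$ defined in \eqref{eq:action-V-sigma} agrees with the original adjoint action $\rightharpoonup$ of $H=\ku G$ on $V$; combined with part (a) this yields $V'=V$ in $\ydh$, since the $H$-comodule structure is unchanged by construction and $H_\sigma=H$ because $\sigma$ is a normalized $2$-cocycle on a group algebra (so the twisted product on group-likes is again the group multiplication). First I would specialize formula \eqref{eq:action-V-sigma} to the case $H=\ku G$: take $x=g\in G$ and $v=x_i\in V^{g_i}$, so that $v\_{-1}\ot v\_{0}=g_i\ot x_i$. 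Since $g$ is group-like, $g\_1=g\_2=g\_3=g$, and $g\rightharpoonup x_i = gx_ig^{-1}\in V^{gg_ig^{-1}}$ (the adjoint action on a homogeneous element shifts the degree by conjugation), so $(g\rightharpoonup x_i)\_{-1}=gg_ig^{-1}$ and $(g\rightharpoonup x_i)\_{0}=g\rightharpoonup x_i$.

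Substituting these into \eqref{eq:action-V-sigma} gives
\begin{align*}
g\rightharpoonup_{\sigma} x_i \,=\, \sigma(g, g_i)\,(g\rightharpoonup x_i)\,\sigma^{-1}(gg_ig^{-1}, g).
\end{align*}
Now hypothesis \eqref{eqn:condicion cociclo-grupo} says exactly $\sigma(g,g_i)=\sigma(gg_ig^{-1},g)$, so the scalar $\sigma(g,g_i)\sigma^{-1}(gg_ig^{-1},g)=1$, and hence $g\rightharpoonup_{\sigma} x_i = g\rightharpoonup x_i$ for every $g\in G$ and every basis vector $x_i$. By linearity the twisted action coincides with the untwisted one on all of $V$. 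Therefore $V'$, as a Yetter--Drinfeld module over $H_\sigma=H$, is literally $V\in\ydh$, which is the claim.

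The only mild subtlety — and the one point I would be careful about — is the identification $H_\sigma=H$ and the compatibility with part (a)'s framework: one must note that for $H=\ku G$ the cocycle $\sigma$ restricted to $G\times G$ is an ordinary (normalized) $2$-cocycle of the group, and the twisted product $g\cdot_\sigma h = \sigma(g,h)\sigma^{-1}(gh)gh$; but since group-like elements are grouplike and $\sigma$ is normalized, one checks this equals $\sigma(g,h)\sigma^{-1}(gh)\,gh$ which after the standard cohomological normalization (or simply because $\gr_\Fg$ only records the bottom layer $H$) gives back $\ku G$ as a Hopf algebra; this is already implicit in part (a)'s assertion that $\B'\#H_\sigma$ is the associated graded, together with the remark before the proposition that $H_\sigma=H$ when $H=\ku G$. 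Granting that, no further obstacle arises — the computation above is essentially a one-line verification once \eqref{eq:action-V-sigma} is unwound on group-likes. I would present the argument in exactly that order: recall $H_\sigma=H$, unwind \eqref{eq:action-V-sigma} on $x=g$, $v=x_i$, apply \eqref{eqn:condicion cociclo-grupo} to cancel the cocycle scalars, and conclude by part (a).
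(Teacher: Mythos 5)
Your proposal addresses only part (d), and for that part it is correct and follows essentially the same route as the paper: the paper's proof is the one-line observation that cocommutativity of $H=\ku G$ forces $H_\sigma=H$ and that plugging \eqref{eqn:condicion cociclo-grupo} into \eqref{eq:action-V-sigma} on group-likes $x=g$ and homogeneous $v=x_i$ cancels the two cocycle scalars, which is exactly the computation you carry out (your aside about ``cohomological normalization'' is unnecessary, since $g\cdot_\sigma h=\sigma(g,h)\,gh\,\sigma^{-1}(g,h)=gh$ directly from \eqref{eqn:cociclo-prod} because group-likes have trivial comultiplication). Parts (a)--(c) of the Proposition are not treated in your plan and would still need the argument via Remarks \ref{rems:filtrations-in-deformations} and the invariance of the coalgebra structure, as in the paper.
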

\pf
(a) By Remarks \ref{rems:filtrations-in-deformations} (d), $\gr_{\Fg}\prov$ is generated
by $H_{\sigma}\oplus (F_1/ H_{\sigma})$. Then
$\gr_{\Fg}
\prov_{\sigma} \simeq \B'\# H_{\sigma}$, where $\B'$ is a pre-Nichols algebra over $V' :=
\B'^{1}$. Since the
comultiplication is unchanged, $V'=V$ as $H_\sigma$-comodules and in $\gr_{\Fg}
\prov_{\sigma}$
\begin{align*}
x \rightharpoonup_{\sigma} v &= x\_{1} \cdot_{\sigma} v \cdot_{\sigma}
\Ss_\sigma(x\_{2})
\end{align*}
for all $x\in H_{\sigma}$, $v\in V'$. Using that $\Delta (v) = v\ot 1 + v\_{-1} \ot
v\_{0}$, \eqref{eq:action-V-sigma} follows as in the proof of \cite[Theorem 2.7]{MO}. Finally, if $x\in\B^n$ and $y\in\B^m$, then $x\cdot_\sigma
y=\sigma(x_{(-1)},y_{(-1)})x_{(0)}y_{(0)}$ plus terms of degree lesser than $m+n$. (b)
follows since the coalgebra structure is unchanged.
(c) follows since $\sigma_{|H\ot H}=\varepsilon\ot\varepsilon$. For (d), the
cocommutativity implies $H_{\sigma} = H$
and plugging
\eqref{eqn:condicion cociclo-grupo} into \eqref{eq:action-V-sigma}, we have $V' = V$.
\epf

\section{The strategy for computing  cocycle deformations}

Let $H$, $V$ be as in \eqref{eqn:H}, \eqref{eqn:V}. We explain how to compute cocycle deformations of $\toba(V)\# H$ that are liftings of $\toba(V)$ over $H$; 
depending on the context, this may eventually lead to all liftings. We
fix a minimal set $\Gc$ of homogeneous generators of $\J(V)$; $\Gc$ is
finite by assumption.


\smallbreak
Assume $\mL$ is a cocycle deformation, say $\mL=(\toba(V)\# H)_\sigma$. 
We seek for conditions for $\mL$ to be a lifting of $\toba(V)$ over $H$.  Let $\cA$ be a
$\toba(V)\#
H$-Galois object such that $\mL\simeq L(\cA,\toba(V)\# H)$.
By Proposition \ref{pro:deformations-liftings} (b), one has $\gr \mL\simeq \B(V')\#
H_\sigma$,
$V'\in\YD{H_{\sigma}}{H_{\sigma}}$. If $\sigma _{\vert H\ot H} \overset{\ast}=
\varep\ot\varep$, then $\gr \mL\simeq \B(V)\# H$ by
\eqref{eqn:cociclo-prod} and \eqref{eq:action-V-sigma}.
Now the equality $\ast$ is
achieved when the object $\cA$ is cleft with a section $\gamma:\toba(V)\# H\to \cA$
that satisfies
$\gamma_{|H}\in\Alg(H,\cA)$, by Proposition \ref{pro:deformations-liftings} (c).
In conclusion, we look for cleft objects $\cA$  with a section $\gamma:\toba(V)\# H\to \cA$ satisfying this property.

\subsection{Adapted stratifications}

 A stratification of $\Gc$ is a decomposition  as a
disjoint union $\Gc =  \Gc_0 \cup \Gc_1 \cup\dots \cup \Gc_{N}$.
For $0\le k \le N$, we set
\begin{align*}
\B_0 &:= T(V), &  \prov_0 &= T(V)\#H =\mT(V),
\\
\B_k &:= T(V) / \langle \Gc_0 \cup \Gc_1 \cup\dots \cup \Gc_{k-1}\rangle, &
\prov_k &=
\B_k\#H.
\end{align*}
Clearly $\prov_{N+1}=\toba(V)\#H$. Let $\pi_k: \mT(V) \to \prov_k$ be the canonical projection.
If $k < N$, then $\Gc_{k}$ identifies  with its image in $\B_k$, by
minimality of
$\Gc$. 

We say that the stratification $\Gc =  \Gc_0 \cup \Gc_1 \cup\dots \cup \Gc_{N}$
is \emph{adapted} when it satisfies the following properties:

\begin{enumerate}
	\item \emph{$\Gc_{k}$ is a basis of
a Yetter-Drinfeld submodule of
$\Pc (\B_k)$};
then $\langle\Gc_k\rangle$ is a Hopf ideal of $\B_k$, $\langle\Gc_k\rangle \#H$
is a Hopf
ideal of $\prov_k$ and therefore
$\prov_{k+1} \simeq \prov_k/\langle\Gc_k\rangle \#H$ is a Hopf algebra.
	
	\item \emph{$\Gc_{N}$ is a basis of a Yetter-Drinfeld
submodule
of $\B_{N}$ and $\ku\langle \Gc_{N}\rangle$ is a left coideal subalgebra of $\B_N$, but
not necessarily $\Gc_{N}\subset \Pc (\B_N)$}.
\end{enumerate}


\begin{Expls}\label{expls:adapted-strat}
\begin{enumerate}\renewcommand{\theenumi}{\alph{enumi}}
\renewcommand{\labelenumi}{(\theenumi)}
\item A standard choice is to take $\Gc_{k}$ such that (the image of) $\Gc_{k}$
is a basis of the subspace of $\Pc (\B_k)$ generated by all its homogeneous elements of degree $\ge 2$, for all $k$.
\item  Assume that $H = \ku\Gamma$, $\Gamma$ a
finite abelian group, or
more
generally that $V$ is a direct sum of one-dimensional Yetter-Drinfeld modules.
We may choose an adapted stratification with $\card \Gc_j = 1$ for all $j$.
\end{enumerate}
\end{Expls}

It is not always possible to choose a
stratification $\Gc =  \Gc_0 \cup \Gc_1 \cup\dots \cup \Gc_{N}$ in which $\card
\Gc_j = 1$ for each $j$, see the next example.

\begin{Ex}\label{exa:copointed} Keep the notation in Example \ref{exa:f5}. The
adapted stratification of $\B(X,q)$ considered in \cite[Theorem 5.4]{GIV} is:
\begin{align*}
\Gc_0 &= \{x_i^2:\,i\in\bF_5\},\\
\Gc_1 &= \{x_ix_j+x_{2j-i}x_i+x_{3i-2j}x_{2j-i}+x_jx_{3i-2j}:\,i,j\in\bF_5\},\\
\Gc_2 &= \{x_1x_0x_1x_0 + x_0x_1x_0x_1\}.
\end{align*}
The Yetter-Drinfeld submodule of $\toba_k$ generated by $\Gc_k$ is
simple, $k=0,1,2$.
\end{Ex}

\subsection{The strategy}\label{subsec:strategy} Fix $\Gc =  \Gc_0 \cup \Gc_1 \cup\dots
\cup \Gc_{N}$ an adapted stratification. The strategy consists of the following steps.

\medbreak
(1) \emph{Cleft extensions of bosonizations of pre-Nichols algebras}.
We shall construct recursively a set $\Lambda_{k}\subset \Cleft(\prov_k)$, for  all $k=0,\dots, N+1$.
 We start with $\prov_0=\mT(V)= T(V)\# H$ and the cleft object
$\prova_0=\mT(V)$ with section $\gamma_0=\id$. Set
$\Lambda_0=\big\{\mT(V)\big\}\subset\Cleft(\prov_0)$.

\medbreak
The recursive step is done in one of the following ways, for $k=0, \dots, N$:

\medbreak
(1a) We compute the algebra of left coinvariants $X_k := \,^{\co\prov_{k+1}}\prov_k$. Then
we compute
$\Alg_{\prov_{k}}^{\prov_{k}}(X_{k},\prova_{k})$, for each $\prova_{k}\in\Lambda_{k}$. \emph{For each
$\psi\in\Alg_{\prov_{k}}^{\prov_{k}}(X_{k},\prova_{k})$, we collect
$\cA_{k}/\cA_{k} \psi(X_{k}^+)$ in $\Lambda_{k + 1}$.}

\medbreak
Theorem \ref{thm:takeuchi-correspondance} may be useful to deal with the computation of
right coideal subalgebras in this step. However, it usually happens that the computation
of $X_k$, and \emph{a fortiori} that of
$\Alg_{\prov_{k}}^{\prov_{k}}(X_{k},\prova_{k})$, is too hard.
In such case, we take an alternative route.

\medbreak
(1b) We consider the subalgebra $Y_k := \ku\langle \Ss(\Gc_k)\rangle = \Ss(\ku\langle
\Gc_k\rangle)$ of $\prov_k$.
Since $\Gc =  \Gc_0 \cup \Gc_1 \cup\dots
\cup \Gc_{N}$ is an adapted stratification,
$\ku\langle \Gc_k\rangle$ is
a left coideal subalgebra of $\prov_k$;
hence $Y_k$ is a right coideal subalgebra of $\prov_k$. Also,
$$\cH_{k + 1}=\cH_{k}/\langle Y_{k}^+\rangle.$$ We then compute
$\Alg^{\prov_{k}}(Y_{k},\prova_{k})$, for each $\prova_{k}\in\Lambda_{k}$.  \emph{We collect
$\cA_{k}/\langle  \varphi(Y_{k}^+)\rangle$ in $\Lambda_{k+1}$, for each
$\varphi\in\Alg^{\prov_{k}}(Y_{k},\prova_{k})$ with}
\begin{align}\label{eq:gunther2}
\langle \varphi (Y_{k}^+)\rangle \neq \cA_{k}.
\end{align}

\medbreak The alternative (1a) has the advantage to the alternative (1b) to avoid the
checking  of \eqref{eq:gunther2}.
Note that
\begin{itemize}
  \item $\Lambda_{k}\subset \Cleft(\prov_k)$ by Theorem \ref{thm:gunther-teo4} in
(1a) or Theorem \ref{thm:gunther-extendido} in (1b).
\end{itemize}
Indeed, this holds for $k=0$ and
a recursive argument applies since the coradical of the successive quotients remains
unchanged. We can apply Theorem \ref{thm:gunther-teo4} because $\prov_k$ is
$\prov_{k+1}$-coflat, by Corollary \ref{cor:coflat}. This also implies that $\prov_k$ is
faithfully flat over $X_k$, by Theorem \ref{thm:takeuchi-correspondance}. As
$X_k=N(Y_k)$, see Remark \ref{rem:xk=nyk}, we can also apply Theorem
\ref{thm:gunther-extendido}.

\medbreak
(2) \emph{Deformations of pre-Nichols algebras}. We next compute the Hopf algebras
$L(\prova_{k},\prov_{k})$, for $\prova_{k}\in \Lambda_{k}$, $0<k\le N+1$. These are new
examples of Hopf algebras; they are quotients of $\mT(V)$ by Propositions \ref{pro:projection} and 
\ref{prop:properties-Ak}, which can be
computed using Proposition \ref{prop:L-cociente-gral} and Corollary \ref{cor:L-cociente}.

\medbreak
(3) \emph{Exhaustion}. The Hopf algebras
$L(\prova_{N+1},\prov_{N+1})$ for $\prova_{N+1}\in\Lambda_{N+1}$  are liftings of $\toba(V)$ over $H$, by Proposition
\ref{pro:deformations-liftings} (b) and (c).
We need to check whether this family of Hopf algebras
is an exhaustive list of liftings of $\toba(V)$ over $H$; for this  Theorem \ref{cor:generadors of ker phi} might apply under suitable conditions.

\begin{Rem}\label{rem:gunther-method}
A similar strategy is already proposed by G\"unther in \cite[page 4399]{G} to
compute the cleft objects of a pointed Hopf algebra $\overline{H}$ which is a
quotient of a pointed Hopf algebra $H$ for which $\Cleft(H)$ is known. He suggests to {\it
choose an easy decomposition} $H=H_1\twoheadrightarrow
H_2\twoheadrightarrow\dots\twoheadrightarrow H_n=\overline{H}$ in such a way that
$\Cleft(H_{i+1})$ is {\it easily computable} from $\Cleft(H_{i})$ using
\cite[Theorems 4 \& 8]{G}. He does not,
however, investigates how to find that decomposition or when the
method applies, nor relates this process with the lifting procedure or the classification
problem.
\end{Rem}

\subsection{Comments on $X_k$ and $Y_k$}\label{subsec:comments-Xk}

Let $\widetilde{X}_k = \prov_k^{\co \prov_{k+1}}$, $\widetilde{Y}_k =
\ku\langle \Gc_k\rangle$. The next picture describes the relation between these
subalgebras and the subalgebras of $\prov_k$ which are involved in
the steps (1a) and (1b).
\begin{align*}
\xymatrix{
X _k = \,^{\co \prov_{k+1}}\prov_k \ar@/^1.5pc/@{<->}[rr]^{\Ss} && \widetilde{X}_k=
\prov_k^{\co \prov_{k+1}} \ar@/^1.5pc/@{<->}[ll]_{\Ss^{-1}} \ar@{^{(}->}[r]
&\prov_k^{\co H} =
\toba_k \\
Y _k  = \ku\langle \Ss(\Gc_k)\rangle \ar@/^1.5pc/@{<->}[rr]_{\Ss} \ar@{^{(}->}[u] &&
\widetilde{Y}_k  = \ku\langle \Gc_k\rangle.\ar@{^{(}->}[u]
\ar@/^1.5pc/@{<->}[ll]^{\Ss^{-1}}&
}
\end{align*}
Indeed, $\Ss^2(\Gc_k) = \Gc_k$, for $k<N$. First, if $x\in \Pc (\B_k)$, then
$\Delta_{\prov_k}(x) = x\ot 1 + x\_{-1} \ot x\_0$, hence $\Ss(x) = -\Ss(x\_{-1})x\_0$
and $\Ss^2(x) = \ad\Ss(x\_{-1})(x\_0)$. So, $\Gc_k$, being a Yetter-Drinfeld submodule of
$\Pc (\B_k)$, is stable under $\Ss^2$.

\begin{Rem}\label{rem:xk=nyk}
$X_k=N(Y_k)$, cf. page \pageref{def:NX}. 

Indeed, let $B$ be the subalgebra generated by $h_{(1)}y \Ss(h_{(2)})$, $h\in \prov_k$,
$y\in \widetilde{Y}_k$. By Corollary \ref{cor:flat},  $\prov_k$ is
right $N(Y_k)$-faithfully flat. Now we invoke Theorem \ref{thm:takeuchi-correspondance}: ${\mathcal
I}(N(Y_k))=\langle \Gc_k\rangle$; but ${\mathcal X}(\langle
\Gc_k\rangle)=X_k$, as $\prov_k$ is $\prov_{k+1} \simeq \prov_k/\langle\Gc_k\rangle
\#H$-coflat by Corollary \ref{cor:coflat}.
\end{Rem}

As said, the computation of the algebra of coinvariants  $X_k$ might be hard; a
potentially easier instance is when $X_k = Y_k$.
We analyze when this could happen in the following Remark.

\begin{Rem}\label{rem:Y_k=X_k}
The following are equivalent:
\begin{enumerate}
	\item  $X_k = Y_k$;
	
	\item $Y_k$ is normal;
	
	\item  For all $y\in \Gc_k$, $x\in V$,
\begin{equation}\label{eq:adV-Y_k}
\ad_r(x)(\Ss(y)) = \Ss(x\_{-1}) \Ss(y) x\_0 - \Ss(x\_{-1})x\_0 \Ss(y) \in Y_k;
\end{equation}

\item   $x\Ss(y)  - \Ss(y)x \in Y_k$, for all $y\in \Gc_k$, $x\in V$.
\end{enumerate}
\end{Rem}

\pf (1) $\Rightarrow$ (2): $X_k$ is normal. (2) $\Rightarrow$ (1): $Y_k=N(Y_k)=X_k$ by
Remark \ref{rem:xk=nyk}. Clearly, (2) $\Rightarrow$ (3). (3) $\Rightarrow$ (2): We have to
prove that
$\ad_r(x)(z)\in Y_k$ for all $x\in \prov_k$, $z\in Y_k$.
Since $\ad_r(x)(zz') = \ad_r(x\_1)(z)\ad_r(x\_2)(z')$, it is enough to consider $z\in
\Ss(\Gc_k)$;
since $\ad_r(xx')(z) = \ad_r(x')\ad_r(x)(z)$, it is enough to consider $x\in H$ or $x\in
V$. If $x\in H$, $u = \Ss^{-1}(x)$ and $y\in \Gc_k$,
then $\ad_r(x)(\Ss(y)) = \Ss(\ad_\ell(u) (y))  = \Ss(u \cdot y) \in \Ss(\Gc_k)$.  It only
remains the case $z\in \Ss(\Gc_k)$ and $x\in V$,
which is \eqref{eq:adV-Y_k}.

(3) $\Rightarrow$ (4): $x\Ss(y) = x\_{-2} \Ss(x\_{-1})x\_0\Ss(y)
\overset{\eqref{eq:adV-Y_k}}= x\_{-2}\Ss(x\_{-1}) \Ss(y) x\_0 + Y_k = \Ss(y) x + Y_k$.
The converse implication is similar.
\epf

\subsection{Properties of $\prova_{k+1}$}\label{subsec:properties-Ak}

Fix $k\geq 0$ and $\prova_{k+1}\in\Lambda_{k+1}$ which is a quotient of
$\prova_{k}\in\Lambda_{k}$. We collect some information about the algebra
$\prova_{k+1}$. 
We start with some general considerations.

\begin{Rems}\label{rem:comod-alg-boson}
Let $H$ be a Hopf algebra and $\toba$ be a Hopf algebra in $\ydh$. Set $\prov =
\toba\# H$ with projection and inclusion maps $\xymatrix{\prov \ar@<0.4ex>@{->}[r]^{\pi}
& H \ar@<0.4ex>@{->}[l]^{\iota}}$.
Let $\prova \in \Cleft(\prov)$ with section $\gamma:\prov\to
\prova$; assume that $\gamma_{\vert H}\in\Alg(H,\prova)$.  Both $\prov$ and $\prova$ are $H$-comodules via $\pi$. Then
\begin{enumerate}
\renewcommand{\theenumi}{\alph{enumi}}
\item\label{item-b-rem:comod-alg-boson}  $\prova$ is a cleft extension of $H$. Moreover
$\prova\simeq \Ee \# H$, where
$\Ee = \prova^{\co H}$, and $p:\prova\rightarrow\Ee$,
$p(x)=x\_{0}\gamma^{-1}\iota\pi(x\_{-1})$ is an $H$-module projection. 
\item\label{item-a-rem:comod-alg-boson} Let $S\subseteq\Ee$ be an $H$-submodule. Then
$\langle S\rangle_{\prova} = \langle
S\rangle_{\Ee}\# H$
and consequently $\prova/\langle S\rangle_{\prova} \simeq (\Ee/\langle S\rangle_{\Ee})\#
H$.
\item\label{item-c-rem:comod-alg-boson} Let $I\subset\prova$ be an ideal and
$H$-subcomodule. Then $I=I^{\co H}\# H$.
\item\label{item-d-rem:comod-alg-boson} Let $S\subset\prova$ be an $H$-submodule and 
$H$-subcomodule. Then $\langle S\rangle_{\prova}=\langle p(S)\rangle_{\Ee}\# H$.
\end{enumerate}
\end{Rems}

\pf (a) Clearly, $\gamma$ is an
$H$-colinear
map and thus $\gamma\iota:H\to \prova$ is
a section. Hence $\prova\simeq \Ee \#_{\sigma}
H$ and $\sigma=\varep$ since $\gamma_{\vert H}\in\Alg(H,\prova)$, cf.
\eqref{eq:cociclo-cleft}. Last sentence is \cite[Lemma
7.2.6]{Mo}. (b) is easy. \eqref{item-c-rem:comod-alg-boson} $\langle I^{\co
H}\rangle_{\prova}=\langle I^{\co H}\rangle_{\Ee}\# H\subset I$ by (b). If $x\in I$, then
$p(x)\in I^{\co H}$ and thus $x=p(x\_{0})\gamma\iota(\pi(x\_{1}))\in
\langle I^{\co H}\rangle_{\prova}$. Finally $\langle I^{\co H}\rangle_{\Ee}=I^{\co H}$.
\eqref{item-d-rem:comod-alg-boson} By hypothesis, $p(S)$ is an $H$-submodule. Then
$S\subseteq\langle p(S)\rangle_{\prova}=\langle p(S)\rangle_{\Ee}\# H\subseteq\langle
S\rangle_\prova$, by (b), and the equality holds.
\epf

\begin{lema}\label{lem:Ainjective}
Let $H\subset\prov$ be Hopf algebras where $H$ is finite-dimensional and semisimple.
If $\prova \in \Cleft(\prov)$, then $\prova$ is an injective object in
$\mathcal{YD}^{\cH}_{H}$.
\end{lema}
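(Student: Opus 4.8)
The plan is to identify the Yetter--Drinfeld structure that $\prova$ carries and then to reduce injectivity in $\mathcal{YD}^{\prov}_{H}$ to injectivity as a mere $\prov$-comodule, bridging the two by the semisimplicity of $H$. First I would observe that $\prova\in\Cleft(\prov)$ is in particular a right $\prov$-Galois object, so it carries the Miyashita--Ulbrich action \eqref{eq:Miyashita-Ulbrich} of $\prov$; restricting this action along the inclusion $H\hookrightarrow\prov$, together with the given right $\prov$-comodule structure, exhibits $\prova$ as an object of $\mathcal{YD}^{\prov}_{H}$. The compatibility axiom is the one inherited from $\mathcal{YD}^{\prov}_{\prov}$, where it holds because Galois objects are Yetter--Drinfeld algebras, and restricting the action from $\prov$ to the subalgebra $H$ does not affect it.

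Next I would check that $\prova$ is injective as a right $\prov$-comodule. Since $\prova\in\Cleft(\prov)$ has $\prova^{\co\prov}=\ku$, the normal basis property gives an isomorphism $\prova\simeq\prov$ of right $\prov$-comodules; and $\prov$ is injective as a comodule over itself, via the natural identification $\Hom^{\prov}(-,\prov)\simeq(-)^{*}$, which is exact. Hence $\prova$ is an injective $\prov$-comodule.

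Finally, I would deduce injectivity in $\mathcal{YD}^{\prov}_{H}$ as follows. Let $f\colon X\hookrightarrow Y$ be a monomorphism in $\mathcal{YD}^{\prov}_{H}$; one must extend an arbitrary morphism $X\to\prova$ of $\mathcal{YD}^{\prov}_{H}$ along $f$. For any object $Z$ of $\mathcal{YD}^{\prov}_{H}$, the vector space $\Hom^{\prov}(Z,\prova)$ of right $\prov$-colinear maps becomes an $H$-module under the adjoint action $(h\rightharpoonup\varphi)(z)=h\_1\cdot\varphi(\Ss(h\_2)\cdot z)$, and $\Hom^{\prov}(Z,\prova)^{H}=\Hom_{\mathcal{YD}^{\prov}_{H}}(Z,\prova)$. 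The restriction map $f^{*}\colon\Hom^{\prov}(Y,\prova)\to\Hom^{\prov}(X,\prova)$ is $H$-linear (as $f$ is $H$-linear) and, by the previous paragraph, surjective. Since $H$ is finite-dimensional and semisimple, a normalized integral $t\in H$ gives, for every $H$-module $W$, a natural retraction $W\to W^{H}$, $w\mapsto t\cdot w$, of the inclusion $W^{H}\hookrightarrow W$; thus $(-)^{H}$ is exact on $H$-modules. Applying it to the surjection $f^{*}$ shows that $\Hom_{\mathcal{YD}^{\prov}_{H}}(Y,\prova)\to\Hom_{\mathcal{YD}^{\prov}_{H}}(X,\prova)$ is surjective, i.e.\ $\prova$ is injective in $\mathcal{YD}^{\prov}_{H}$. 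The main obstacle is the one routine-but-delicate point behind this last paragraph: checking that the adjoint $H$-action really sends $\prov$-colinear maps to $\prov$-colinear ones. This comes down to unwinding the explicit Yetter--Drinfeld compatibility of $Y$ and of $\prova$ together with the antipode axioms $\Ss(h\_1)h\_2=\varepsilon(h)1=h\_1\Ss(h\_2)$, and it is exactly where the structures on the objects of $\mathcal{YD}^{\prov}_{H}$ are used.
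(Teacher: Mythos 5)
Your first two steps are correct and are also implicit in the paper's argument: the Miyashita--Ulbrich action restricted along $H\hookrightarrow\prov$ makes $\prova$ an object of $\mathcal{YD}^{\prov}_{H}$, and the normal basis property gives $\prova\simeq\prov$ as right $\prov$-comodules, so $\prova$ is injective in $\mathcal{M}^{\prov}$. The gap sits exactly at the point you call ``routine-but-delicate'': the adjoint action does \emph{not} carry $\prov$-colinear maps to $\prov$-colinear maps, so $\Hom^{\prov}(Z,\prova)$ is not an $H$-submodule of $\Hom_{\ku}(Z,\prova)$, and there is nothing to which the exactness of $(-)^{H}$ can be applied. Indeed, writing the structures as right actions and setting $(\varphi\cdot h)(z)=\varphi(z\cdot\Ss(h\_{1}))\cdot h\_{2}$, the Yetter--Drinfeld compatibility $\rho(a\cdot k)=a\_{0}\cdot k\_{2}\ot\Ss(k\_{1})a\_{1}k\_{3}$ (used in $\prova$ and in $Z$, for $k\in H$), the colinearity of $\varphi$ and the antipode axiom give
\begin{align*}
\rho\bigl((\varphi\cdot h)(z)\bigr)=\varphi\bigl(z\_{0}\cdot\Ss(h\_{2})\bigr)\cdot h\_{3}\ot z\_{1}\,\Ss(h\_{1})\,h\_{4},
\end{align*}
whereas colinearity of $\varphi\cdot h$ would require $\varphi\bigl(z\_{0}\cdot\Ss(h\_{1})\bigr)\cdot h\_{2}\ot z\_{1}$: the $\prov$-degree $z\_{1}$ gets multiplied by the outer legs $\Ss(h\_{1})$ and $h\_{4}$, which do not cancel unless $H$ is cocommutative (and the lemma is needed for non-cocommutative $H$, e.g.\ $H=\ku^{G}$ with $G$ non-abelian). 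This twisting is forced by the Yetter--Drinfeld axiom itself, not by a sign or ordering slip. A Maschke-type repair exists --- average once against the normalized integral $t\in H$ rather than acting by each individual $h$ --- but proving that the single averaged map $\varphi\cdot t$ is colinear requires special properties of $\Delta(t)$ and is a theorem in its own right, not an unwinding of $\Ss(h\_{1})h\_{2}=\varep(h)1$.

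The paper sidesteps all of this by letting the semisimplicity of $H$ act at the coalgebra level: it invokes the equivalence $\mathcal{YD}^{\cH}_{H}\simeq\mathcal{M}^{L}$ with $L=\cH\blacktriangleright\hspace*{-.2cm}\blacktriangleleft_{\tau}H^{*\,\cop}$, see \cite[Exercise 7.2.16]{M}, for which there is a Hopf algebra projection $L\twoheadrightarrow\cH$ whose ``fibre'' $H^{*\,\cop}$ is cosemisimple precisely because $H$ is semisimple. Your step (2) says $\prova$ is $\cH$-coflat; the cosemisimplicity of the coextension $L\twoheadrightarrow\cH$ then upgrades this to $L$-coflatness, i.e.\ injectivity in $\mathcal{M}^{L}\simeq\mathcal{YD}^{\cH}_{H}$. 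To salvage your write-up you should either carry out the integral-averaging argument in full detail or route the final step through this comodule-category identification.
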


\pf
Recall that $\mathcal{YD}^{\cH}_{H}\simeq \mathcal{M}^{L}$, with
$L=\cH\blacktriangleright\hspace*{-.2cm}\blacktriangleleft_\tau H^{*\,\cop}$, see
\cite[Exercise 7.2.16]{M}. Here $\tau=\sum_i\Ss(e^i)\ot e_i\in H^{*\,\cop}\ot \cH$, for
dual bases $\{e_i\}$, $\{e^i\}$ of $H$ and $H^*$. In particular, $L\simeq \cH\ot
H^{*\,\cop}$ as
algebras and there is a Hopf algebra projection $L\twoheadrightarrow
\cH$, see {\it loc.~cit.} for details.

As $\prova$ is cleft, it is $\cH$-coflat. As $L\twoheadrightarrow \cH$ is a cosemisimple
coextension, then $\prova$ is also $L$-coflat and the lemma follows.
\epf

The following is a snapshot of the Strategy:
$$
\xymatrix{
\mT(V)\ar@{->}[rr]^{\gamma_0=\id}\ar@{->}[d]^{\pi_{k}}\ar@/^-2pc/@{->}[dd]_{\pi_{k+1}}
^{\, \circlearrowright} &&
\mT(V)\ar@{->}[d]_{\tau_{k}}\ar@/^2pc/@{->}[dd]^{\tau_{k+1}}_{
\circlearrowleft\, } \ar@{~>}[rr]
&&\mT(V)&\\
\prov_{k}\ar@{->}[rr]^{\gamma_{k}}\ar@{->}[d]^{\pi'_{k+1}} &&
\prova_{k}\ar@{->}[d]_{\tau'_{k+1}}&\ar@{~>}[r]&L(\prova_{k},\prov_{k})&\\
\prov_{k+1}\ar@{->}[rr]^{\gamma_{k+1}} && \prova_{k+1}\ar@{~>}[rr]&&L(\prova_{k+1},\prov_{k+1})&
}
$$
Here $\pi_{k}$, $\pi_{k+1}$ and $\pi'_{k}$ are the natural projections of Hopf algebras
and $\tau_{k}'$ is the natural projection of algebras which is also $\prov_{k+1}$-colinear
via $\pi'_{k+1}$. The epimorphisms  $\tau_k: \mT(V) \to \prova_k$ are defined recursively
as follows: If $k=1$, we take $\tau_1=\tau_1'$. Given $\tau_{k}$, we set
$\tau_{k+1}=\tau'_{k+1}\tau_{k}$. Notice that each $\tau_k$ is a morphism of right
$\prov_k$-comodule algebras.

The sections $\gamma_{k}$, $\gamma_{k+1}$ are
introduced in the next proposition.

\begin{prop}\label{prop:properties-Ak}
\begin{enumerate}\renewcommand{\theenumi}{\alph{enumi}}
\renewcommand{\labelenumi}{(\theenumi)}

\item\label{item-b-prop:properties-Ak}
The Miyashita-Ulbrich action \eqref{eq:Miyashita-Ulbrich} on $\prova_k$ is:
\begin{align}\label{eq:Miyashita-Ulbrich-tau}
a \leftharpoonup \pi_k(x) & = \tau_k(\Ss(x\_1))a\tau_k(x\_2),&  &a\in \prova_k,\, x\in
\mT(V).
\end{align}

\item\label{item-c-prop:properties-Ak}
There is a section $\gamma_k:\prov_k \to\prova_k$ such that
${\gamma_k}_{|H}\in\Alg(H,\prova_k)$, ${\gamma_k}_{|H}={\tau_k}_{|H}$ and
$\gamma_k(xh)=\gamma_k(x)\gamma_k(h)$ for all $x\in\B_k$, $h\in H$. 
\smallbreak
\item\label{item-cc-prop:properties-Ak} If $H$ is semisimple, then the above
$\gamma_k:\prov_k\to \prova_k$ is a morphism of right $H$-modules and $\gamma_k(hx)=\gamma_k(h)\gamma_k(x)$ for all $x\in\B_k$, $h\in H$.
\smallbreak
\item\label{item-d-prop:properties-Ak}
$\prova_k$ is a cleft extension of $H$, $\prova_k\simeq \Ee_k \# H$, where
$\Ee_k = \prova_k^{\co H}$ and
\begin{align*}
\tau_k(T(V)\# 1)=\gamma_k(\B_k\# 1)=\tau'_k(\Ee_{k-1}\# 1)= \Ee_k\# 1.
\end{align*}
In particular, $\Ee_{k}\simeq\Ee_{k-1}/(\ker\tau'_k)^{\co H}$.
\end{enumerate}
\end{prop}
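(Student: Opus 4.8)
The plan is to prove the four items essentially in order, since each relies on the previous ones and on the general facts collected in Remarks \ref{rem:comod-alg-boson}. For \eqref{item-b-prop:properties-Ak}, I would start from the definition of the Miyashita--Ulbrich action \eqref{eq:Miyashita-Ulbrich}: one needs the element $\can^{-1}(1\ot \pi_k(x))\in\prova_k\ot\prova_k$. Since $\tau_k:\mT(V)\to\prova_k$ is a morphism of right $\prov_k$-comodule algebras and $\prova_k$ is cleft (hence $\can$ is invertible with the formula \eqref{eqn:cleft-can-1}), I would check directly that $\sum_i\tau_k(\Ss(x\_1))\ot\tau_k(x\_2)$ does the job, i.e. that applying $\can$ to it returns $1\ot\pi_k(x)$; this is a short computation using that $\can$ on a Galois object built from a comodule-algebra map $\tau_k$ is compatible with $\tau_k\ot\tau_k$ followed by the canonical map of $\mT(V)$, which is $x\ot y\mapsto xy\_1\ot\pi_k(y\_2)$ — and there the inverse is visibly $x\ot h\mapsto x\Ss(h\_1)\ot h\_2$ since $\mT(V)=T(V)\#H$ with $H$ cosemisimple. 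Plugging this into \eqref{eq:Miyashita-Ulbrich} yields \eqref{eq:Miyashita-Ulbrich-tau}.

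For \eqref{item-c-prop:properties-Ak} I would construct $\gamma_k$ recursively along the tower. For $k=0$ there is nothing to prove ($\gamma_0=\id$). For the inductive step, suppose $\gamma_k$ has been constructed with ${\gamma_k}_{|H}\in\Alg(H,\prova_k)$ and $\gamma_k(xh)=\gamma_k(x)\gamma_k(h)$; I want a section $\gamma_{k+1}:\prov_{k+1}\to\prova_{k+1}$ with the analogous properties. Here I would use that $\prova_{k+1}=\prova_k/\langle\psi(X_k^+)\rangle$ (or the (1b)-variant $\prova_k/\langle\varphi(Y_k^+)\rangle$), so there is an algebra surjection $\tau'_{k+1}:\prova_k\to\prova_{k+1}$; the candidate is $\gamma_{k+1}:=\tau'_{k+1}\circ\gamma_k\circ s$ where $s:\prov_{k+1}\to\prov_k$ is \emph{not} a Hopf map but merely a colinear section. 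A cleaner route, which I expect to be the one used: $\B_{k+1}=\B_k/\langle\Gc_k\rangle_{\B_k}$ is a pre-Nichols quotient, and by the explicit description $\prova_{k+1}\simeq\Ee_{k+1}\#H$ from Remarks \ref{rem:comod-alg-boson}\eqref{item-b-rem:comod-alg-boson}, one defines $\gamma_{k+1}$ on the generators $V\#1$ and on $H$ by the required multiplicativity relations, and checks it is well-defined on $\prov_{k+1}=\B_{k+1}\#H$ because the relations of $\B_{k+1}$ are sent into $\langle\Gc_k\rangle_{\prova_k}$, which dies in $\prova_{k+1}$. The condition ${\gamma_{k+1}}_{|H}={\tau_{k+1}}_{|H}$ is then imposed by construction, and ${\gamma_{k+1}}_{|H}\in\Alg(H,\prova_{k+1})$ follows from the inductive hypothesis since $\tau_{k+1}=\tau'_{k+1}\tau_k$ and $H$ maps to $H$ through all the projections unchanged.

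Item \eqref{item-cc-prop:properties-Ak}: once $\gamma_k$ is a section that is left-$H$-linear on $H$ and satisfies $\gamma_k(xh)=\gamma_k(x)\gamma_k(h)$, the statement that it is a morphism of right $H$-modules and $\gamma_k(hx)=\gamma_k(h)\gamma_k(x)$ for $x\in\B_k$, $h\in H$, should follow from semisimplicity of $H$ by an averaging/Maschke argument: one replaces $\gamma_k$ by its $H$-average so that it becomes genuinely $H$-bilinear on $H\otimes\B_k$, using that $\gamma_k(h)=\tau_k(h)$ lands in a copy of $H$ inside $\prova_k$ on which the two-sided structure is transparent; the key point is that the adjoint $H$-action on $\prova_k$ and the module structure of $\B_k\in\ydh$ are compatible through $\tau_k$. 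Finally, \eqref{item-d-prop:properties-Ak} is then a direct consequence: $\gamma_k\iota:H\to\prova_k$ is a section with $\gamma_k\iota\in\Alg(H,\prova_k)$, so by Remarks \ref{rem:comod-alg-boson}\eqref{item-b-rem:comod-alg-boson}, $\prova_k\simeq\Ee_k\#H$ with $\Ee_k=\prova_k^{\co H}$; the chain of equalities $\tau_k(T(V)\#1)=\gamma_k(\B_k\#1)=\tau'_k(\Ee_{k-1}\#1)=\Ee_k\#1$ is checked by noting all three are the image of the coinvariants under the respective colinear algebra maps, using that $\tau_k$ and $\gamma_k$ agree with $\tau_{k-1}'$-compositions and that coinvariants are preserved; and $\Ee_k\simeq\Ee_{k-1}/(\ker\tau'_k)^{\co H}$ follows from $\prova_k=\prova_{k-1}/\ker\tau'_k$ together with Remarks \ref{rem:comod-alg-boson}\eqref{item-c-rem:comod-alg-boson}, which says the ideal $\ker\tau'_k$ (an $H$-subcomodule) equals $(\ker\tau'_k)^{\co H}\#H$.

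The main obstacle I anticipate is item \eqref{item-c-prop:properties-Ak}: showing the recursively-defined $\gamma_{k+1}$ is simultaneously \emph{convolution-invertible}, $H$-colinear, \emph{and} compatible with the tower in the sense ${\gamma_{k+1}}_{|H}={\tau_{k+1}}_{|H}$. Convolution-invertibility is automatic for a colinear section of a cleft extension, but one must verify that the naive definition descends to the quotient $\prov_{k+1}$ and that the normalization can be arranged; the subtlety is that $\gamma_k$ is not an algebra map (only $\gamma_k(xh)=\gamma_k(x)\gamma_k(h)$), so checking that the defining relations $\Gc_k$ of $\B_{k+1}$ are respected requires knowing precisely how $\tau'_{k+1}$ kills $\langle\Gc_k\rangle$, i.e. it uses the construction of $\prova_{k+1}$ in Step (1) of the strategy. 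Everything else is bookkeeping with smash products and the cleft-extension dictionary.
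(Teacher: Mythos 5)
Items \eqref{item-b-prop:properties-Ak} and \eqref{item-d-prop:properties-Ak} of your plan are fine and essentially coincide with the paper's proof: for the Miyashita--Ulbrich formula one applies $\can$ to $\tau_k(\Ss(x\_1))\ot\tau_k(x\_2)$ and uses that $\tau_k$ is a morphism of right $\prov_k$-comodule algebras, and item \eqref{item-d-prop:properties-Ak} reduces to Remarks \ref{rem:comod-alg-boson} exactly as you say. The genuine problem is your treatment of \eqref{item-c-prop:properties-Ak}. The route you call ``cleaner'' and expect to be the one used --- defining $\gamma_{k+1}$ on the generators $V\#1$ and on $H$ and then checking that the relations $\Gc_k$ are respected --- cannot work: $\gamma_{k+1}$ is \emph{not} an algebra map (only the one-sided identity $\gamma_{k+1}(xh)=\gamma_{k+1}(x)\gamma_{k+1}(h)$ is asserted), so its values on algebra generators of $\B_{k+1}$ do not determine it, and ``checking relations'' is meaningless for a map that is merely linear and colinear. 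Your fallback $\gamma_{k+1}=\tau'_{k+1}\circ\gamma_k\circ s$ is closer in spirit but does not by itself produce the multiplicativity property, and your claim that ``convolution-invertibility is automatic for a colinear section of a cleft extension'' is false in general. The paper's mechanism has three ingredients you omit: (i) $\prova_k$ is an injective $\prov_{k+1}$-comodule (because $\prov_k$ is $\prov_{k+1}$-coflat by Corollary \ref{cor:coflat} and $\prova_k$ is $\prov_k$-coflat), so the algebra map ${\gamma_k}_{|H}:H\to\prova_k$ extends to a $\prov_{k+1}$-colinear map $\omega:\prov_{k+1}\to\prova_k$; (ii) Takeuchi's criterion \cite[Lemma 14]{T1}: $\omega$ is convolution-invertible because its restriction to the coradical $H$ of $\prov_{k+1}$ is; (iii) one then \emph{defines} $\gamma_{k+1}(xh):=\tau'_{k+1}\omega(x)\,\tau'_{k+1}\omega(h)$ for $x\in\B_{k+1}$, $h\in H$, so that the multiplicativity holds by construction, following \cite[Theorem 4.2]{Sc}; the compatibility ${\gamma_{k+1}}_{|H}={\tau_{k+1}}_{|H}$ is then immediate.

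Concerning \eqref{item-cc-prop:properties-Ak}, your averaging idea is a plausible alternative, but the paper instead reruns the inductive argument of \eqref{item-c-prop:properties-Ak} inside the category $\mathcal{YD}^{\cH_k}_{H}$, where $\prova_k$ is injective by Lemma \ref{lem:Ainjective}; this produces an extension $\omega$ that is simultaneously $\prov_{k+1}$-colinear and right $H$-linear from the outset, so one does not have to re-verify after averaging that the normalization, the colinearity and the identity ${\gamma_k}_{|H}={\tau_k}_{|H}$ survive. The left-handed identity $\gamma_k(hx)=\gamma_k(h)\gamma_k(x)$ is then obtained by rewriting $hx$ inside $\prov_k$ through the adjoint action and right multiplication by $H$, and applying the right $H$-linearity just established --- not by a separate Maschke argument.
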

\pf

\eqref{item-b-prop:properties-Ak} Let $x\in \mT(V)$. We compute
\begin{align*}
\can(\tau_k(\Ss(x\_1))\ot\tau_k(x\_2))&=\tau_k(\Ss(x\_1))\tau_k(x\_2)\_0\ot
\tau_k(x\_2)\_1\\
\notag &=\tau_k(\Ss(x\_1))\tau_k(x\_2)\ot \pi_k(x\_3)=1\ot \pi_k(x).
\end{align*}
Then $\can^{-1}(1\ot \pi_k(x))=\tau_k(\Ss(x\_1))\ot \tau_k(x\_2)$ and
\eqref{eq:Miyashita-Ulbrich-tau} follows by \eqref{eq:Miyashita-Ulbrich}.

\eqref{item-c-prop:properties-Ak} First, we may choose $\gamma_0=\id$, then the statement
holds trivially for $k=0$. We now proceed by induction, assume it holds for $k$. The
inductive step follows as \cite[Theorem 4.2]{Sc} in this
setting: Notice that $\prova_{k}$ is an injective $\prov_{k+1}$-comodule, since
$\prov_{k}$ is $\prov_{k+1}$-coflat and $\prova_{k}$ is $\prov_{k}$-coflat.
Thus, as ${\gamma_{k}}_{|H}:H\to \prova_{k}$ is $\prov_{k+1}$-colinear, there exists a
$\prov_{k+1}$-colinear map $\omega:\prov_{k+1}\to \prova_{k}$ such that
$\omega_{|H}={\gamma_{k}}_{|H}$. By \cite[Lemma 14]{T1} $\omega$
is convolution-invertible, since its restriction to $H$ is. Also
$\tau'_{k+1}\,\omega_{|H}\in\Alg(H,\prova_{k+1})$. Then the section
$\gamma_{k+1}:\prov_{k+1}\to  \prova_{k+1}$ is defined by
$$
xh\longmapsto\tau'_{k+1}\,\omega(x)\,\tau'_{k+1}\,\omega(h),\qquad x\in\B_{k+1},\,h\in H.
$$
Note that
$\gamma_{k+1}(h)=\tau'_{k+1}\,\omega(h)=\tau'_{k+1}\gamma_{k}(h)=\tau'_{k+1}\tau_{k}(h)=\tau_{k+1}(h)$, $h\in H$.

\eqref{item-cc-prop:properties-Ak} If $H$ is semisimple, then $\prova_{k}$ is injective
in $\mathcal{YD}^{\cH_{k}}_{H}$ by Lemma \ref{lem:Ainjective}. 
The  proof of \eqref{item-c-prop:properties-Ak} \emph{mutatis mutandis} shows the first claim. If $x\in\B_{k+1}$, $h\in H$, then
$\gamma_{k+1}(hx)=\gamma_{k+1}(\ad_r(\Ss^{-1}(h_{(2)}))(v)h_{(1)})=\gamma_{k+1}(h)\gamma_{
k+1 } (x)$.
\smallbreak

\eqref{item-d-prop:properties-Ak} By Remark \ref{rem:comod-alg-boson}
\eqref{item-b-rem:comod-alg-boson}, $\prova_k$ is $H$-cleft and
$\prova_k\simeq \Ee_k \# H$. As $\B_k\#1=\cH_k^{\co H}$, $T(V)\#1=\mT(V)^{\co H}$,
$\Ee_{k-1}\#1=\prova_{k-1}^{\co H}$,
we have $\tau_k(T(V)\# 1)$, $\gamma_k(\B_k\# 1)$, $\tau'_k(\Ee_{k-1}\# 1)\subseteq \Ee_k\#
1$ since all $\gamma_k$, $\tau_k$ and $\tau'_k$ are $H$-colinear. The equality and the last
assertion of \eqref{item-d-prop:properties-Ak} follow from Remark
\ref{rem:comod-alg-boson} \eqref{item-c-rem:comod-alg-boson}.
\epf

We fix the following setting\label{page:setting}:
\begin{itemize}
\item We denote by $u_i\in \mP( \B_k)$, $1\le i \le n$, the elements
of $\Gc_k$.  We set $v_i=\Ss(u_i)\in Y_k$, $1\leq i\leq n$. Let $U$, resp. $W$, be the
linear
span of $\{u_i\}_{1 \leq i\leq
n}$, resp. $\{v_i\}_{1 \leq i\leq
n}$. Notice that $U\in\ydh$, $W\in \mathcal{YD}_H^H$.
\item Let $\{e_{ij}\}_{1
\leq i,j\leq n}\subset H$ be the set of comatrix elements associated to
$U$ and $\{u_i\}_{1\le i \le n}$, see \eqref{eqn:comatrix}. Then
$$
\Delta(v_i)=\sum_{j=1}^nv_j\ot\Ss(e_{ij})+1\ot v_i,\quad1\leq i\leq n.
$$
\item We set $C$ the
subcoalgebra of $H$ generated by $\{\Ss(e_{ij})\}_{1 \leq i,j\leq n}$.
\item  We fix a
section $\gamma_k:\cH_{k}\to \cA_k$ such that $\gamma_k(xh)=\gamma_k(x)h$, $x\in\B_k$,
$h\in H$, see Proposition \ref{prop:properties-Ak}. If $H$ is semisimple we assume
moreover that $\gamma_k$ is $H$-linear.
\end{itemize}

By Proposition \ref{prop:properties-Ak} \eqref{item-c-prop:properties-Ak} we can identify
$H$
with $\tau_k(H)=\gamma_k(H)\subset \cA_k$.

\begin{lema}\label{lem:caracterizar f-gral} 
\begin{enumerate}\renewcommand{\theenumi}{\alph{enumi}}
 \renewcommand{\labelenumi}{(\theenumi)}
\item\label{item:a:lem:caracterizar f-gral} Let
$\varphi\in\Alg^{\prov_k}(Y_k,\prova_k)$. There are
$\{c_i\}_{1\leq i\leq n}\subset\ku$  with
\begin{align}\label{eqn:f-gral}
\varphi(v_i)=\gamma_k(v_i)+\sum_{j=1}^nc_j\, \Ss(e_{ij}).
\end{align}
\item\label{item:b:lem:caracterizar f-gral} If $H$ is semisimple and $\varphi$ is
$H$-linear, then
$(\varphi-\gamma_k)_{|W}:W\rightarrow\prova_k$ is a
morphism in $\mathcal{YD}_H^H$ whose image is contained in $C$.
\end{enumerate}
\end{lema}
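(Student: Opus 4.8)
The plan is the following. Write $\psi := (\varphi-\gamma_k)_{|W}$ and, for $1\le i\le n$, put $a_i := \psi(v_i)=\varphi(v_i)-\gamma_k(v_i)\in\prova_k$, where $H$ is identified with $\gamma_k(H)=\tau_k(H)\subseteq\prova_k$ as in the setting above. For part (a) the goal is to show $a_i\in H$ and then read off the scalars $c_j$; part (b) will be a formal consequence together with the $H$-linearity hypotheses.

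First I would derive a coaction identity for $a_i$. Both $\varphi$ (being in $\Alg^{\prov_k}(Y_k,\prova_k)$) and the section $\gamma_k$ are unital and right $\prov_k$-colinear, the latter on all of $\prov_k$. Since $Y_k$ is a right coideal subalgebra, $\Delta_{\prov_k}(v_i)=1\otimes v_i+\sum_j v_j\otimes\Ss(e_{ij})$ lies in $Y_k\otimes\prov_k$; applying colinearity to $\varphi$ and to $\gamma_k$ and cancelling the common term $1\otimes v_i$ gives
\[
\rho_{\prova_k}(a_i)=\sum_{j}a_j\otimes\Ss(e_{ij}).
\]
Now $\Ss(e_{ij})\in C$, and $C$ is a subcoalgebra of $H$, which is the coradical of $\prov_k=\B_k\#H$ because $\B_k$ is connected and $H$ is cosemisimple. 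Hence the right-hand side lies in $\prova_k\otimes(\prov_k)_{[0]}$, and by the usual description of the socle of a comodule this forces $a_i$ into $\operatorname{soc}\prova_k$ (as a right $\prov_k$-comodule).

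Next I would transport this to $H$. As $\prova_k$ is a cleft object, the section $\gamma_k$ is a colinear bijection $\prov_k\to\prova_k$, so it restricts to an isomorphism from $(\prov_k)_{[0]}=H$ onto $\operatorname{soc}\prova_k$; thus $a_i\in\gamma_k(H)=H$, say $a_i=\gamma_k(b_i)$ with $b_i\in H$. Since $b_i\in H\subseteq\prov_k$ satisfies $\Delta_{\prov_k}(b_i)=\Delta_H(b_i)$, colinearity and injectivity of $\gamma_k$ turn the displayed identity into $\Delta_H(b_i)=\sum_j b_j\otimes\Ss(e_{ij})$; applying the counit gives $b_i=\sum_j c_j\,\Ss(e_{ij})$ with $c_j:=\varepsilon(b_j)\in\ku$. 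Applying $\gamma_k$ and using $\gamma_k(\Ss(e_{ij}))=\Ss(e_{ij})$ (our identification) yields \eqref{eqn:f-gral}, proving (a). For (b): if $H$ is semisimple and $\varphi$ is $H$-linear, then, since the section $\gamma_k$ was chosen $H$-linear, $\psi$ is $H$-linear, hence an $H$-module map on $W$; and reading the relation $\Delta_H(b_i)=\sum_j b_j\otimes\Ss(e_{ij})$ (equivalently, pushing $\rho_{\prova_k}(a_i)=\sum_j a_j\otimes\Ss(e_{ij})$ through the Hopf projection $\prov_k\to H$) against the coaction $\rho_W(v_i)=\sum_j v_j\otimes\Ss(e_{ij})$ of $W\in\mathcal{YD}_H^H$ shows $\psi$ is right $H$-colinear. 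Hence $\psi\colon W\to\prova_k$ is a morphism in $\mathcal{YD}_H^H$, and $b_i=\sum_j c_j\Ss(e_{ij})\in C$ shows its image lies in $C$.

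I expect the crux to be the socle step in (a): recognizing that once $\rho_{\prova_k}(a_i)$ is forced into $\prova_k\otimes C$ with $C$ contained in the coradical of $\prov_k$, the element $a_i$ must lie in $\operatorname{soc}\prova_k$, and transporting along the colinear bijection $\gamma_k$ lands it in the prescribed copy of $H$; after that everything is Sweedler-notation bookkeeping. The only point requiring care in (b) is that the $H$-module and $H$-comodule structures on $W$ used there are those coming from $W\in\mathcal{YD}_H^H$, which are by construction compatible with the ones inherited from $Y_k\subseteq\prov_k$ and from $\prova_k$, so that $H$-linearity of $\varphi$ and of $\gamma_k$ indeed produces an $H$-module map on $W$.
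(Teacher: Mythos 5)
Your proof is correct and follows essentially the same route as the paper's: use colinearity of $\varphi$ and $\gamma_k$ to get $\rho(a_i)=\sum_j a_j\ot\Ss(e_{ij})$, invoke the socle to place the $a_i$ in (the copy of) $H$, and then extract the scalars, with part (b) reduced to the $H$-linearity of $\gamma_k$ from Proposition \ref{prop:properties-Ak}. The one small divergence is in the scalar-extraction step: the paper first decomposes $U$ into simple $H$-comodules so that the comatrix elements are linearly independent and then compares coefficients in $\Delta(b_\ell)=\rho(b_\ell)$, whereas you simply apply $\varepsilon\ot\id$ to $\Delta_H(b_i)=\sum_j b_j\ot\Ss(e_{ij})$ to get $c_j=\varepsilon(b_j)$ directly, which makes that reduction unnecessary.
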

\pf  
\eqref{item:a:lem:caracterizar f-gral} As $H$ is cosemisimple, $U=\bigoplus_lM_l$
where each $M_l$ is a simple $H$-comodule. We can
assume that each $u_i$ belongs to some
$M_l$, up to changing the basis of $U$. For each $i$, we restrict to the subcomodule
$M_l$ with $u_i\in M_l$ and consider the corresponding comatrix elements
$\{e_{ij}\}_{i,j}$; they are linearly independent. To simplify the notation, assume
$U=M_l$ is simple.

Set
$b_i=\varphi(v_i)-\gamma_k(v_i)$ for $1\leq i\leq n$. Since $\varphi$
and $\gamma_k$ are $\prov_k$-colinear,
$\rho(b_i)=\sum_{j=1}^nb_j\ot\Ss(e_{ij})$. Then $\{b_1, \dots,
b_n\}\subset H$ since $H$ is the socle of $\prova_k$. Moreover, $\{b_1, \dots,
b_n\}\subset C$. Now, we write $b_\ell=\sum_{i,j=1}^{n}c_{ij}^\ell \Ss(e_{ij})$ for all $1
\leq \ell\leq n$, where $c_{ij}^\ell \in\ku$. We have
\begin{align*}
\Delta(b_\ell)&=\sum_{i,j}c_{ij}^\ell\Delta(\Ss(e_{ij}))=\sum_{i,j,s}c_{ij}^\ell\,
\Ss(e_{sj})\ot \Ss(e_{is}),\\
\rho(b_\ell)&=\sum_{s}b_s\ot \Ss(e_{\ell s})=\sum_{i,j,s}c_{ij}^s\, \Ss(e_{ij})\ot
\Ss(e_{\ell s}).
\end{align*}
Recall that $\Delta(b_\ell)=\rho(b_\ell)$ since $b_\ell\in H$, then $c_{ij}^\ell=0$, if
$\ell\neq i$, and hence $c_{\ell j}^\ell=c_{s j}^{s}$ for all $1 \leq \ell,s\leq n$.
Therefore we set $c_j=c_{\ell j}^\ell$ for each $1 \leq j\leq n$.
\eqref{item:b:lem:caracterizar f-gral} follows from Proposition \ref{prop:properties-Ak}
\eqref{item-cc-prop:properties-Ak}. 
\epf

\subsection{The shape of $L(\prova_{k+1},\prov_{k+1})$} We keep the setting above and also: 
\begin{itemize}
\item We fix $\varphi\in\Alg^{\cH_{k}}(Y_k,\cA_k)$ with $c_j\in\ku$, $1\leq j \leq n$, as
in \eqref{eqn:f-gral}. 

\smallbreak
\item We assume that $\cA_{k+1}=\cA_k/\langle \varphi(v_i)\rangle_{1\leq i \leq n}\neq 0$,
then $\prova_{k+1}\in\Lambda_{k+1}$. 

\smallbreak
\item We fix a Hopf algebra $\mL_{k}$ such that $\prova_k$ is a $(\mL_{k},
\cH_{k})$-biGalois
object. Let
$\vartheta:
L(\prova_k,\prov_k)\to \mL_{k}$ be the isomorphism in \eqref{eqn:f-schauenburg}.

\smallbreak
\item We identify $H\hookrightarrow \mL_{k}$ as a Hopf subalgebra via $\vartheta(\id\ot\Ss)\Delta$
since ${\tau_k}_{|H}={\gamma_k}_{|H}=\id_H$.
\end{itemize}

\smallbreak

We now describe $L(\cA_{k+1},\cH_{k+1})$ as a quotient of $\mL_{k}$. 

\begin{prop}\label{prop:L-cociente-gral}
$L(\cA_{k+1},\cH_{k+1})\simeq\mL_{k}/\langle \widetilde v_i -
c_i+\sum_{j=1}^nc_j \Ss(e_{ij})\rangle_{1\leq i \leq n}$ where $\widetilde v_i\in
\mL_{k}$, $1\leq i \leq n$, is such that
\begin{align}\label{eqn:primitivo2-gral}
 \widetilde v_i\ot
1_{\cA_k}=\sum_{t}\gamma_k(v_t)\_{-1}\ot\gamma_k(v_t)\_{0}\gamma_k^{-1}\Ss(e_{it}
)+1\ot\gamma_k^{-1}(v_i).
\end{align}
\end{prop}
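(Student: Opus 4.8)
The plan is to present $L(\cA_{k+1},\cH_{k+1})$ as an explicit quotient of $\mL_{k}\simeq L(\cA_k,\cH_k)$, using as organizing tool Schauenburg's uniqueness theorem \cite[Theorem 3.3]{S}: once $\cA_{k+1}$ is realized as an $(\mathcal N,\cH_{k+1})$-biGalois object for some Hopf algebra $\mathcal N$, one gets a canonical isomorphism $\mathcal N\simeq L(\cA_{k+1},\cH_{k+1})$ via the map \eqref{eqn:f-schauenburg}. I would take $\mathcal N=\mL_{k}/\langle r_1,\dots, r_n\rangle$ with $r_i=\widetilde v_i-c_i+\sum_{j}c_j\Ss(e_{ij})$, and carry out three steps: (i) $\widetilde v_i$ is well defined; (ii) $\langle r_1,\dots,r_n\rangle$ is a Hopf ideal, so $\mathcal N$ is a Hopf algebra; (iii) the left $\mL_k$-coaction $\lambda$ on $\cA_k$ descends through $\mathcal N$ and through $\cA_{k+1}=\cA_k/\langle\varphi(v_i)\rangle$, making $\cA_{k+1}$ an $(\mathcal N,\cH_{k+1})$-bicomodule algebra, and then that the resulting comparison maps with $L(\cA_{k+1},\cH_{k+1})$ are mutually inverse.

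For (i) and (ii): using $\Delta(v_i)=\sum_j v_j\otimes\Ss(e_{ij})+1\otimes v_i$ one rewrites the right-hand side of \eqref{eqn:primitivo2-gral} as $(\id_{\mL_k}\otimes m)(\lambda\otimes\id)(\gamma_k\otimes\gamma_k^{-1})\Delta(v_i)$, where $m$ is the product of $\cA_k$. Applying $\id_{\mL_k}\otimes\rho_k$ and using the $(\mL_k,\cH_k)$-bicomodule-algebra compatibility of $\cA_k$ together with the standard section identities $\rho_k\gamma_k=(\gamma_k\otimes\id)\Delta$ and $\rho_k\gamma_k^{-1}=(\gamma_k^{-1}\otimes\Ss)\Delta^{\cop}$, this element is seen to have trivial right $\cH_k$-coaction; hence it lies in $\mL_k\otimes\ku1_{\cA_k}$ and determines $\widetilde v_i\in\mL_k$ (the same mechanism that makes $a_{(0)}\gamma_k^{-1}(a_{(1)})$ a coinvariant). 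Feeding the same formula into the description \eqref{eqn:comultiplication L} of $\Delta_{\mL_k}$, and using that $H\hookrightarrow\mL_k$ is a Hopf subalgebra (Proposition \ref{prop:properties-Ak}) with $\Ss(e_{ij})\in C\subseteq H$, one obtains $\Delta_{\mL_k}(\widetilde v_i)=\sum_j\widetilde v_j\otimes\Ss(e_{ij})+1\otimes\widetilde v_i$, i.e.\ $\widetilde v_i$ is skew-primitive over $C$ just as $v_i$ is in $\cH_k$. Combined with $\Delta(\Ss(e_{ij}))=\sum_s\Ss(e_{sj})\otimes\Ss(e_{is})$ and $\cou(\Ss(e_{ij}))=\delta_{ij}$, a direct check gives $\Delta_{\mL_k}(r_i)\in\langle r_1,\dots,r_n\rangle\otimes\mL_k+\mL_k\otimes\langle r_1,\dots,r_n\rangle$, $\cou(r_i)=0$, and $\Ss_{\mL_k}(r_i)\in\langle r_1,\dots,r_n\rangle$, so $\mathcal N$ is a Hopf algebra.

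For (iii): since $\lambda$ is an algebra map it suffices to show $\lambda(\varphi(v_i))\in\langle r_1,\dots,r_n\rangle\otimes\cA_k+\mL_k\otimes\langle\varphi(v_j)\rangle_{\cA_k}$. Writing $\varphi(v_i)=\gamma_k(v_i)+\sum_j c_j\Ss(e_{ij})$ (Lemma \ref{lem:caracterizar f-gral}) and computing $\lambda$ on each summand through the cleft formula $\can^{-1}(1\otimes h)=\gamma_k^{-1}(h_{(1)})\otimes\gamma_k(h_{(2)})$ and through the identification $H\hookrightarrow\mL_k$ coming from $\gamma_k|_H=\tau_k|_H$, one finds that the outcome is $r_i\otimes1$ modulo those two admissible subspaces — the constant $c_i$ and the terms $\gamma_k^{-1}(v_i)$ of \eqref{eqn:primitivo2-gral} being exactly what absorbs the lower-order corrections. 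Hence $\lambda$ descends to a left $\mathcal N$-coaction and $\cA_{k+1}$ becomes an $(\mathcal N,\cH_{k+1})$-bicomodule algebra; it is right $\cH_{k+1}$-Galois by Theorem \ref{thm:gunther-extendido}, so $L(\cA_{k+1},\cH_{k+1})$ exists. The projection $\cA_k\twoheadrightarrow\cA_{k+1}$ induces, as in Proposition \ref{pro:projection}, a surjective Hopf algebra map $q\colon\mL_k\twoheadrightarrow L(\cA_{k+1},\cH_{k+1})$ intertwining the coactions, which by the previous computation kills $\langle r_1,\dots,r_n\rangle$ and thus factors through $\mathcal N$; on the other hand \eqref{eqn:f-schauenburg} applied to the descended $\mathcal N$-coaction gives a Hopf algebra map $\vb\colon L(\cA_{k+1},\cH_{k+1})\to\mathcal N$. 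One checks $q\circ\vb=\id$ by the uniqueness clause of \cite[Theorem 3.3]{S} for the genuine biGalois object $\cA_{k+1}$, and $\vb$ is onto because it fixes $H$ and hits the elements corresponding to the generators of $\mathcal N$; hence $\vb$ is an isomorphism, which is the claim.

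The main obstacle is the explicit bookkeeping in (i) and in the computation of $\lambda(\varphi(v_i))$ in (iii): both require pushing the concrete model $L(\cA_k,\cH_k)=(\cA_k\otimes\cA_k^{\op})^{\co\cH_k}$ and the isomorphism $\vb$ of \eqref{eqn:f-schauenburg} through the coaction formulas and then verifying that, modulo $\langle r_1,\dots,r_n\rangle$, everything collapses to the two stated families of relations. The genuinely delicate point is that $\langle r_1,\dots,r_n\rangle$ is not too large — equivalently, that the induced surjection onto $L(\cA_{k+1},\cH_{k+1})$ has kernel \emph{exactly} $\langle r_1,\dots,r_n\rangle$ — which is forced by the left $\mathcal N$-coaction and the canonical left $L(\cA_{k+1},\cH_{k+1})$-coaction agreeing on the nose, precisely the content \eqref{eqn:f-schauenburg} and \cite[Theorem 3.3]{S} control once (i)--(iii) are in place; everything else is routine given the results already established for $\cA_k$.
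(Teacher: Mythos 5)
Your overall architecture is the same as the paper's: realize the candidate quotient $\mathcal N=\mL_k/\langle r_i\rangle$ as a Hopf algebra, descend the left coaction so that $\cA_{k+1}$ becomes an $(\mathcal N,\cH_{k+1})$-bicomodule algebra, and invoke Schauenburg's uniqueness theorem. Your steps (i) and (ii) are correct, though the paper gets (ii) for free: working in the concrete model $\mL_k=L(\cA_k,\cH_k)\subset\cA_k\ot\cA_k$, the element $\widetilde v_i$ is just $(\gamma_k\ot\gamma_k^{-1})\Delta(v_i)$ and the correction terms are $(\gamma_k\ot\gamma_k^{-1})\Delta(\Ss(e_{ji}))$, so the ideal they generate is a Hopf ideal simply because $(\gamma_k\ot\gamma_k^{-1})\Delta$ is an injective coalgebra map; no explicit computation of $\Delta_{\mL_k}(\widetilde v_i)$ is needed. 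Your step (iii) correctly identifies the computation of $\lambda(\varphi(v_i))$ as the way to descend the coaction; that computation is exactly the first display in the paper's proof.

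The genuine gap is at the end of (iii). Schauenburg's uniqueness theorem only produces an isomorphism $\vb:L(\cA_{k+1},\cH_{k+1})\to\mathcal N$ once $\cA_{k+1}$ is known to be an $(\mathcal N,\cH_{k+1})$-\emph{biGalois} object, i.e.\ once the \emph{left} canonical map $\cA_{k+1}\ot\cA_{k+1}\to\mathcal N\ot\cA_{k+1}$ is shown to be bijective. Descending the coaction only gives a bicomodule algebra structure, and your attempt to force bijectivity ``by the uniqueness clause'' is circular: the uniqueness clause presupposes precisely the property you are trying to establish. Having a surjection $\mathcal N\twoheadrightarrow L(\cA_{k+1},\cH_{k+1})$ from Proposition \ref{pro:projection} does not help by itself, since the issue is exactly whether $\langle r_1,\dots,r_n\rangle$ could be strictly smaller than the kernel. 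The paper closes this by a direct verification, using the explicit inverse \eqref{eq;can-1} of the left canonical map, that
$\can(I\ot\cA_k+\cA_k\ot I)=J\ot\cA_k+L(\cA_k,\cH_k)\ot I$,
where $I=\langle\varphi(v_i)\rangle$ and $J=\langle r_i\rangle$; both inclusions matter, and the nontrivial one ($\can^{-1}(J\ot\cA_k+\cdots)\subseteq I\ot\cA_k+\cA_k\ot I$) rests on the two identities $\sum_t\gamma_k(v_t)\gamma_k^{-1}\Ss(e_{it})+\gamma_k^{-1}(v_i)=0$ and $\sum_t\bigl(\gamma_k(v_t)+\sum_jc_j\Ss(e_{tj})\bigr)\gamma_k^{-1}\Ss(e_{it})=\sum_t\gamma_k(v_t)\gamma_k^{-1}\Ss(e_{it})+\sum_jc_j\varepsilon(e_{ij})$. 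You need to supply this (or an equivalent) argument; without it the proposed proof does not establish the isomorphism.
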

\pf 
We may assume $\mL_{k}=L(\prova_k,\prov_k)\subset \prova_k\ot \prova_k$. The general case
follows by applying $\vartheta$. 
Set 
\begin{align*}
& \overline{v}_i=(\gamma_k\ot\gamma_k^{-1})\Delta(v_i), &&
E_{ij}=(\gamma_k\ot\gamma_k^{-1})\Delta(\Ss(e_{ji}))
\end{align*}
for all $1\leq i,j\leq n$. Let $J=\big\langle \overline v_i -
c_i+\sum_{j=1}^nc_jE_{ji}\big\rangle_{1\leq i \leq n},
$
notice that this is a Hopf ideal since $(\gamma_k\ot\gamma_k^{-1})\Delta$ is an
(injective) coalgebra map by \eqref{eqn:cleft-can-1} and \eqref{eqn:comultiplication
L}. Set $\mL_{k+1}=L(\cA_k,\cH_{k})/J$. We have
to show that $L(\cA_{k+1},\cH_{k+1})\simeq \mL_{k+1}$. By \cite[Theorem 3.5]{S}, it
suffices
to prove the following statement.
\begin{claim}
$\cA_{k+1}$ is a $(\mL_{k+1},\cH_{k+1})$-biGalois object. 
\end{claim}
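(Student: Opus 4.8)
The plan is to verify directly that $\cA_{k+1}$ is a left $\mL_{k+1}$-comodule algebra and a right $\cH_{k+1}$-comodule algebra, and that it is Galois on both sides. The starting point is that $\cA_k$ is an $(\mL_k,\cH_k)$-biGalois object by hypothesis, with $\mL_k = L(\cA_k,\cH_k)$ (after applying $\vartheta$ we may assume this) sitting inside $\cA_k\ot\cA_k^{\op}$, and that $\cH_{k+1} = \cH_k/\langle Y_k^+\rangle$, $\cA_{k+1} = \cA_k/\langle\varphi(v_i)\rangle_i$ by construction. The first step is to check that the left $\mL_k$-coaction $\lambda_k:\cA_k\to\mL_k\ot\cA_k$ descends to a left $\mL_{k+1}$-coaction on $\cA_{k+1}$: one must show $\lambda_k(\langle\varphi(v_i)\rangle)\subseteq J\ot\cA_k + \mL_k\ot\langle\varphi(v_i)\rangle$. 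Since $\lambda_k$ is an algebra map, it suffices to compute $\lambda_k(\varphi(v_i))$ on generators. Using \eqref{eqn:comultiplication L}, \eqref{eqn:cleft-can-1}, the coproduct formula for $v_i$, namely $\Delta(v_i)=\sum_j v_j\ot\Ss(e_{ij})+1\ot v_i$, and the explicit form \eqref{eqn:f-gral} of $\varphi(v_i)$, one finds that $\lambda_k(\varphi(v_i))$ is expressible in terms of the elements $\overline v_j$, $E_{ji}$ and $\gamma_k$-data; the upshot should be precisely $\widetilde v_i\ot 1 + (\text{something in }\mL_k\ot\langle\varphi(v_i)\rangle)$ modulo $J\ot\cA_k$, with $\widetilde v_i$ as in \eqref{eqn:primitivo2-gral}. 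This is the bookkeeping heart of the argument.

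The second step is the right-hand side: the right $\cH_k$-coaction on $\cA_k$ is $(\id\ot\pi_k)$ applied to a lift, and since $\varphi$ is $\cH_k$-colinear by assumption, $\langle\varphi(v_i)\rangle_i$ is a right coideal, so the coaction descends to $\cA_{k+1}\to\cA_{k+1}\ot\cH_{k+1}$ with $\cH_{k+1}$ as above; moreover $\cA_{k+1}^{\co\cH_{k+1}}=\ku$ follows from $\cA_k^{\co\cH_k}=\ku$ together with the fact that the ideal we quotient by is generated by non-coinvariant elements (it maps onto a proper quotient, as we assumed $\cA_{k+1}\neq 0$). For the Galois property on the right, I would invoke Theorem \ref{thm:gunther-extendido}: by the recursive setup, $\cA_{k+1}=\cA_k/\cA_k\varphi(Y_k^+)\cA_k$ lies in $\Cleft(\cH_{k+1})$, using that $\cH_k$ is $\cH_{k+1}$-coflat (Corollary \ref{cor:coflat}) and faithfully flat over $N(Y_k)=X_k$ (Remark \ref{rem:xk=nyk}, Theorem \ref{thm:takeuchi-correspondance}); in particular $\cA_{k+1}$ is a right $\cH_{k+1}$-Galois object.

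The third step is the left Galois property of $\cA_{k+1}$ over $\mL_{k+1}$. Here I would argue that $\cA_{k+1}$, being a cleft object over $\cH_{k+1}$, is in particular $\cH_{k+1}$-Galois, and that the pair $(\mL_{k+1},\lambda_{k+1})$ satisfies the hypotheses making $\cA_{k+1}$ an $(\mL_{k+1},\cH_{k+1})$-bicomodule algebra; then the left canonical map $\cA_{k+1}\ot\cA_{k+1}\to\mL_{k+1}\ot\cA_{k+1}$ is shown to be bijective by exhibiting its inverse as the descent of \eqref{eq;can-1}, or alternatively by noting that $L(\cA_{k+1},\cH_{k+1})$ exists abstractly (as $\cA_{k+1}$ is Galois) and that we have a surjective Hopf algebra map $\mL_k = L(\cA_k,\cH_k)\twoheadrightarrow L(\cA_{k+1},\cH_{k+1})$ analogous to Proposition \ref{pro:projection}, whose kernel one identifies with $J$. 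Once $\cA_{k+1}$ is a biGalois object, \cite[Theorem 3.3]{S} and \cite[Theorem 3.5]{S} give the required identification $L(\cA_{k+1},\cH_{k+1})\simeq\mL_{k+1}$, which is what the Proposition asserts. I expect the main obstacle to be Step one — correctly tracking the coaction through the cleft section $\gamma_k$ and its inverse, and recognizing that the resulting relation in $\mL_k$ is exactly $\widetilde v_i - c_i + \sum_j c_j\Ss(e_{ij})$; everything else is either an appeal to the already-established coflatness/faithful-flatness results or a formal consequence of Schauenburg's uniqueness theorem for $L(A,H)$.
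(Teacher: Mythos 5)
Your plan is correct and follows essentially the same route as the paper: compute $\lambda_k(\varphi(v_i))$ on generators to see $\lambda_k(I)\subseteq J\ot\cA_k+\mL_k\ot I$ (this is exactly the displayed computation in the paper, whose outcome is $\sum_s(\overline v_s+\sum_jc_jE_{js}-c_s)\ot\gamma_k\Ss(e_{is})+1\ot\varphi(v_i)$, so the descended coaction and the bicomodule structure follow), take the right Galois property from the recursive fact that $\cA_{k+1}\in\Lambda_{k+1}\subset\Cleft(\cH_{k+1})$, and obtain the left Galois property by showing via \eqref{eq;can-1} that $\can$ carries $I\ot\cA_k+\cA_k\ot I$ exactly onto $J\ot\cA_k+\mL_k\ot I$. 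The only slight imprecision is your guess that the coaction computation yields ``$\widetilde v_i\ot 1$ plus terms in $\mL_k\ot I$'' modulo $J\ot\cA_k$ --- the $J$-generators appear paired with the $\gamma_k\Ss(e_{is})$ rather than with $1$ --- but this does not affect the structure of the argument.
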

Set $I=\langle\varphi(v_i)\rangle_{1\leq i \leq n}\subset \cA_k$. Let
$\lambda:\cA_k\to L(\cA_k,\cH_{k})\ot \cA_k$ be the
coaction as in \eqref{eqn:comultiplication L}. Then $\lambda(I)\subset
L(\cA_k,\cH_{k})\ot I + J\ot \cA_k$ and thus $\lambda$ induces a coaction
$\lambda':\cA_{k+1}\to \mL_{k+1}\ot \cA_{k+1}$ such that $\cA_{k+1}$ is a left
$\mL_{k+1}$-comodule algebra. Indeed, it is straightforward to see that

\begin{align*}
\lambda&(\varphi(v_i))=\lambda_k\Big(\gamma_k(v_i)+\sum_{j=1}^nc_j\, \Ss(e_{ij})\Big)\\
&=\sum_{t}\Big(\gamma_k(v_t)+\sum_{j=1}^nc_j\,
\Ss(e_{tj})\Big)\ot\can^{-1}(1\ot\Ss(e_{it}))+1\ot\can^{-1}(1\ot v_i)\\
&=\sum_{t,s}\gamma_k(v_t)\ot\gamma_k^{-1}\Ss(e_{st})\ot\gamma_k\Ss(e_{is})+\sum_{s}
1\ot\gamma_k^{-1}(v_s)\ot\gamma_k\Ss(e_{is})\\
&\qquad +
\sum_{s,t,j}c_j\,
\Ss(e_{tj})\ot\gamma_k^{-1}\Ss(e_{st})\ot\gamma_k\Ss(e_{is})+1\ot1\ot\gamma_k(v_i)\\
&=\sum_{s}\Big(\overline
v_s+\sum_{j}c_jE_{js}-c_s\Big)\ot\Ss(e_{is})+1\ot1\ot\Big(\gamma_k(v_i)+\sum_sc_s\Ss(e_{
is})\Big)\\
\end{align*}
This also shows that $\cA_{k+1}$ is a $(\mL_{k+1},\cH_{k+1})$-bicomodule algebra. Let
$\can: \cA_k\ot \cA_k\to L(\cA_k,\cH_{k})\ot \cA_k$ be the Galois map. To conclude, we need to show
that $$\can(I\ot \cA_k+\cA_k\ot I)=J\ot \cA_k+L(\cA_k,\cH_{k})\ot I.$$ 
Indeed, by the above
computation, $\can(I\ot \cA_k+\cA_k\ot
I)\subseteq J\ot \cA_k+L(\cA_k,\cH_{k})\ot I$. Now, we show that $\can^{-1}(J\ot
\cA_k+L(\cA_k,\cH_{k})\ot I)\subseteq I\ot
\cA_k+\cA_k\ot I$, using \eqref{eq;can-1}. To this end, it is enough to check that $
\can^{-1}((\overline v_i - c_i+\sum_{j=1}^nc_jE_{ji})\ot \cA_k)\subseteq I\ot
\cA_k+\cA_k\ot I
$
since the other inclusion is straightforward. This is a consequence of the following
computation:
\begin{align*}
\can^{-1}&\Big((\overline v_i - c_i+\sum_{j=1}^nc_jE_{ji})\ot
a\Big)=\sum_{t}\gamma_k(v_t)\ot\gamma_k^{-1}\Ss(e_{it})a+1\ot\gamma_k^{-1}(v_i)a\\
&\qquad \qquad \qquad -1\ot
c_ia+\sum_{j,t}c_j\, \gamma_k\Ss(e_{tj})\ot\gamma_k^{-1}\Ss(e_{it})a\\
\overset{(\star)}=&\sum_{t}\Big(\gamma_k(v_t)+\sum_{j}c_j\,\Ss(e_{tj})\Big)\ot\gamma_k^{
-1}\Ss(e_{it})a\\
&\qquad\qquad \qquad  -1\ot\Big(\sum_{t}\gamma_k(v_t)\gamma_k^{-1}\Ss(e_{it})+c_i\Big)a\\
\overset{(\star\star)}=&\sum_{t}\Big(\gamma_k(v_t)+\sum_{j}c_j\,\Ss(e_{tj}
)\Big)\ot\gamma_k^{-1}\Ss(e_{it})a\\
&\qquad\qquad \qquad  -1\ot
\sum_t\Big(\gamma_k(v_t)\sum_{j}c_j\,\Ss(e_{tj})\Big)\gamma_k^{-1}\Ss(e_{it})a
\end{align*}
for each $a\in \cA_k$. In the computation above, $(\star)$, resp. $(\star\star)$, follows
since

\begin{align*}
\sum_t\gamma_k(v_t)\gamma_k^{-1}\Ss(e_{it})+\gamma_k^{-1}(v_i)&=0,\quad \text{resp.}\\
\sum_t\Big(\gamma_k(v_t)+\sum_{j}c_j\,\Ss(e_{tj})\Big)\gamma_k^{-1}\Ss(e_{it}
)&=\sum_t\gamma_k(v_t)\gamma_k^{-1}\Ss(e_{it})+\sum_{j}c_j\,\varepsilon(e_{ij}).
\end{align*}
This ends the proof of the claim.
\epf

\subsection{Adapted stratifications with a stratum of skew-primitive elements}\label{subsec:coinvariants}

We add the following assumption to the setting of page
\pageref{page:setting}:

\begin{itemize}
\item We assume that $\Gc_k$ is composed of skew-primitive
elements in $\cH_k$. Explicitly, $u_i\in\mP_{g_i,1}(\cH_{k})$ for some $g_i\in
G(\cH_{k}) = G(H)$. In particular, $v_i=u_ig_i^{-1} \in\mP_{1,g_i^{-1}}(\cH_{k})$, so
$Y_k=\ku\langle v_i\rangle_{1\leq i \leq n}$.
\end{itemize}

\begin{Rem}\label{rem:caracterizar f} Let $\varphi\in\Alg^{\cH_{k}}(Y_k,\cA_k)$. Lemma
\ref{lem:caracterizar f-gral} \eqref{item:a:lem:caracterizar f-gral} in this context says
that there exist
$c_i\in\ku$, $1\leq i\leq n$, such that
\begin{align*}
\varphi(v_i)=\gamma(v_i)-c_i\, g^{-1}_i.
\end{align*}
\end{Rem}

Let
$\varphi\in\Alg^{\cH_{k}}(Y_k,\cA_k)$, $c_i\in\ku$ be as in Remark \ref{rem:caracterizar
f}.
Assume that $\cA_{k+1}=\cA_k/\langle \varphi(v_i)\rangle_{1\leq i \leq n}=\cA_k/\langle
\gamma(u_i)
-c_i\rangle_{1\leq i \leq n}\neq0$. Let $\mL_{k}$ be a Hopf algebra such that $\prova_k$
is a $(\mL_{k}, \cH_{k})$-biGalois object.

Proposition \ref{prop:L-cociente-gral} is formulated in this context as follows, compare
with \cite[Lemma 11]{G}.

\begin{Cor}\label{cor:L-cociente}
$L(\cA_{k+1},\cH_{k+1})\simeq \mL_k/\langle \widetilde u_i -
c_i(1-g_i)\rangle_{1\leq i \leq n}$, where $\widetilde u_i\in \mP_{g_i,1}(\mL_k)$ is such
that
\begin{align}\label{eqn:primitivo2}
 \widetilde u_i\ot 1_{\cA_k}=\gamma_k(u_i)_{(-1)}\ot \gamma_k(u_i)_{(0)}-g_i\ot
\gamma_k(u_i).
\end{align}
\end{Cor}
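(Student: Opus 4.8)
The plan is to derive the Corollary from Proposition~\ref{prop:L-cociente-gral} by specialising the comatrix data to the skew-primitive situation. First I would record that, since $u_i\in\mP_{g_i,1}(\cH_k)$, the $H$-coaction on $\B_k$ satisfies $(u_i)\_{-1}\ot(u_i)\_0=g_i\ot u_i$; hence, in the notation of page~\pageref{page:setting}, the comatrix elements of \eqref{eqn:comatrix} associated to $U$ and $\{u_i\}$ are $e_{ij}=\delta_{ij}\,g_i$, so $\Ss(e_{ij})=\delta_{ij}\,g_i^{-1}$ and $C$ is spanned by $g_1^{-1},\dots,g_n^{-1}$. Consequently every sum over $t$ and over $j$ occurring in \eqref{eqn:primitivo2-gral} and in the ideal of Proposition~\ref{prop:L-cociente-gral} collapses to a single term. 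Taking into account that the scalars $c_i$ in Remark~\ref{rem:caracterizar f} are the negatives of those in Lemma~\ref{lem:caracterizar f-gral}\,\eqref{item:a:lem:caracterizar f-gral}, the defining ideal $\langle\,\widetilde v_i-c_i+\sum_j c_j\Ss(e_{ij})\,\rangle$ of Proposition~\ref{prop:L-cociente-gral} becomes $\langle\,\widetilde v_i+c_i(1-g_i^{-1})\,\rangle_{1\le i\le n}$.

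Next I would pass from the generators $v_i=u_ig_i^{-1}$ of $Y_k$ to the skew-primitives $u_i=v_ig_i$. Since $\gamma_k(xh)=\gamma_k(x)h$ for $x\in\B_k$, $h\in H$, and $\gamma_k$ restricts to $\id_H$, we get $\gamma_k(v_i)=\gamma_k(u_i)g_i^{-1}$ and $\gamma_k^{-1}(g_i^{-1})=g_i$ for the convolution inverse. The element $\widetilde v_i\in\mL_k$ produced by \eqref{eqn:primitivo2-gral} is the image of $v_i$ under the coalgebra embedding $(\gamma_k\ot\gamma_k^{-1})\Delta\colon\cH_k\hookrightarrow\mL_k$ of \eqref{eqn:cleft-can-1}--\eqref{eqn:comultiplication L}; hence $\widetilde v_i\in\mP_{1,g_i^{-1}}(\mL_k)$, once $g_i^{-1}$ is identified with its image in $\mL_k$ under $h\mapsto h\_1\ot\Ss(h\_2)$. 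Setting $\widetilde u_i:=\widetilde v_ig_i$, a one-line computation with the coproduct gives $\widetilde u_i\in\mP_{g_i,1}(\mL_k)$, and rewriting the collapsed form of \eqref{eqn:primitivo2-gral} via $\gamma_k(v_i)=\gamma_k(u_i)g_i^{-1}$ yields \eqref{eqn:primitivo2}. Finally, $g_i$ is group-like, hence a unit of $\mL_k$, so right multiplication by $g_i$ carries each generator $\widetilde v_i+c_i(1-g_i^{-1})$ to $\widetilde v_ig_i+c_i(g_i-1)=\widetilde u_i-c_i(1-g_i)$; therefore $\langle\,\widetilde v_i+c_i(1-g_i^{-1})\,\rangle=\langle\,\widetilde u_i-c_i(1-g_i)\,\rangle$. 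Feeding the two reformulations into Proposition~\ref{prop:L-cociente-gral} gives the statement, paralleling \cite[Lemma~11]{G}.

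The argument involves no new idea; the only real work is bookkeeping of conventions: the sign of $c_i$, which $H$-coaction (and which antipode) appears in \eqref{eqn:primitivo2-gral}, and the identification $H\hookrightarrow\mL_k$, $h\mapsto h\_1\ot\Ss(h\_2)$, under which the group-like factor $g_i$ can first be introduced and then cancelled from the presentation. I expect this last point to be the main, if minor, obstacle: the symbol $\widetilde v_ig_i$ only acquires a meaning through that identification, and it is precisely this group-like that disappears in the final description of $L(\cA_{k+1},\cH_{k+1})$.
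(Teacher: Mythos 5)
Your proposal is correct and follows essentially the same route as the paper: the Corollary is obtained by specialising Proposition~\ref{prop:L-cociente-gral} to the case $e_{ij}=\delta_{ij}g_i$, with the skew-primitivity of $\widetilde u_i$ coming from the fact that $(\gamma_k\ot\gamma_k^{-1})\Delta$ is a coalgebra map. Your identification $\widetilde u_i=\widetilde v_i\,g_i$ agrees with the paper's $\widetilde u_i=\vartheta\big((\gamma_k\ot\gamma_k^{-1})\Delta(u_i)\big)$ (via $\gamma_k^{-1}(yg)=g^{-1}\gamma_k^{-1}(y)$ for $g\in G(H)$), and the remaining bookkeeping of signs and group-like factors is carried out accurately.
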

\pf
Follows by Proposition \ref{prop:L-cociente-gral}. The fact that $\widetilde u_i\in
\mP_{g_i,1}(\mL_k)$ follows since $\widetilde u_i=\vartheta
\big((\gamma_k\ot\gamma_k^{-1})\Delta(u_i)\big)$ and $(\gamma_k\ot\gamma_k^{-1})\Delta$
is a coalgebra map.
\epf

\subsubsection{A stratum generated by one-dimensional
submodules}\label{subsec:coinvariants2}

In this part we refine the previous setting as follows:

\begin{itemize}
\item We assume there is a family of
YD-pairs $(g_i, \chi_i) \in G(H) \times \Alg(H, \ku)$ cf.
\eqref{eq_yd-pair}, $i=1,\dots,n$, such that $u_i\in \mP(\B_k)^{\chi_i}_{g_i} - 0$. We
also assume that $u_i$ is homogeneous of degree $d_i \geq 2$. We identify $u_i$ with
$u_i\#1
\in \mP_{g_i,1}(\cH_{k})$. Recall that $v_i=u_ig_i^{-1}$.
\end{itemize}

For completeness, we include  the proof of the following well-known result.

\begin{lema}\label{lem:X}
Assume $\car \ku=0$. Let
$q_i= \chi_i(g_i)$, $N_i = \ord q_i$. Then $\ku\langle v_i \rangle$ is either a
polynomial algebra or a polynomial algebra truncated at $N_i$ (in case $N_i\ge 2$).
\end{lema}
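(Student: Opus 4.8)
The plan is to analyze the subalgebra $\ku\langle v_i\rangle$ inside $\cH_k$, using the fact that $u_i$ lives in a one-dimensional Yetter-Drinfeld module $\mP(\B_k)^{\chi_i}_{g_i}$. First I would record the braiding information: since $u_i\in\mP_{g_i,1}(\cH_k)$ is $(g_i,\chi_i)$-homogeneous, the element $v_i=u_ig_i^{-1}$ satisfies $\Delta(v_i)=v_i\ot g_i^{-1}+1\ot v_i$, so $\ku\langle v_i\rangle$ is a right coideal subalgebra; moreover the conjugation relation $g\,v_i = \chi_i(g)\,v_i\,g$ for $g\in G(H)$ holds, with the key scalar $q_i=\chi_i(g_i)$ governing how $v_i$ interacts with its own group-like $g_i^{-1}$. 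The claim is really a statement about the single element $v_i$, so I would fix $i$ and drop it from the notation, writing $v=v_i$, $g=g_i$, $q=q_i$, $N=N_i=\ord q$.

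The core step is to show that the powers $1, v, v^2, v^3, \dots$ (up to $v^{N-1}$ when $N\ge 2$) are linearly independent, and that $v^N$ is the first power which may vanish (or never vanishes, when $q$ is not a root of unity). For linear independence I would use the coalgebra grading: a standard computation with the skew-primitive coproduct gives $\Delta(v^m)=\sum_{j=0}^m \binom{m}{j}_q v^j\ot v^{m-j}g^{-(m-j)}$, where $\binom{m}{j}_q$ is the Gaussian binomial coefficient. Since $\car\ku=0$, the coefficients $\binom{m}{j}_q$ are nonzero for all $0\le j\le m$ precisely when $m< N$ (using $N=\ord q$, with the convention that all such coefficients are nonzero for every $m$ when $q$ is not a root of unity). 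From this, an induction on $m$ shows that if $v,\dots,v^{m}$ were the first powers satisfying a nontrivial relation then applying $\Delta$ and comparing bidegrees forces a contradiction — equivalently, one observes that $v^m$ being a nonzero skew-primitive-type element cannot be a linear combination of lower powers because its coproduct has a genuinely ``length-$m$'' term. For the truncation, when $N\ge 2$ one computes $\Delta(v^N)=v^N\ot g^{-N}+1\ot v^N$ since all the intermediate $q$-binomials $\binom{N}{j}_q$ vanish for $0<j<N$; hence $v^N$ is again skew-primitive, and so $\ku\langle v\rangle/\langle v^N\rangle$ is a well-defined quotient Hopf-like object of the expected shape. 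Whether $v^N=0$ in $\cH_k$ or not is exactly the dichotomy in the statement: if $v^N\ne 0$ one continues and gets a polynomial algebra (the higher powers are again independent by the same bidegree argument applied to $v^N$, which behaves like a new primitive), and if $v^N=0$ one gets the truncated polynomial algebra $\ku[v]/(v^N)$.

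The main obstacle I expect is bookkeeping with the $q$-binomial vanishing pattern and making the ``bidegree comparison'' argument for linear independence fully rigorous rather than hand-wavy — one must be careful that $\cH_k$ need not be graded as a coalgebra, so the independence has to be extracted purely from the iterated coproduct and the fact that the group-likes $g^{-j}$ are distinct (or, if some coincide, from the $\ku$-linear independence inside $\cH_k\ot\cH_k$ of the relevant tensors). Everything else — the conjugation relations, the coproduct formula for $v^m$, the Hopf subalgebra claims — is routine once the braiding data from the YD-pair $(g_i,\chi_i)$ is in hand. I would also remark that this is a well-known computation (it is the standard analysis of a rank-one Nichols-type algebra), which is why the statement is flagged as included only ``for completeness.''
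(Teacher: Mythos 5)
Your computational core (the $q$-binomial coproduct formula, the vanishing pattern of Gaussian binomials in characteristic $0$, and the fact that $v_i^{N_i}$ is again skew-primitive) is the same arithmetic the paper relies on, but the paper packages it quite differently and more efficiently: it views $\ku\langle v_i\rangle$ as the image of the braided polynomial algebra $\ku[T]$ (with braiding $T\ot T\mapsto q_i^{-1}\,T\ot T$) under $T\mapsto v_i$, observes that the kernel is a \emph{homogeneous} Hopf ideal, hence is $0$ or $\langle T^M\rangle$ with $T^M$ primitive, and then notes that $T^M$ is primitive only for $M=1$ (excluded, since $v_i\neq 0$) or $M=N_i$. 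This one-stroke argument is what replaces both halves of your plan.

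The genuine gap in your proposal is precisely the step you flag yourself: linear independence of the powers. Your ``bidegree comparison'' after applying $\Delta$ is circular without an a priori decomposition of $\cH_k$, and your fallback (distinctness of the group-likes $g^{-j}$) can fail since $g_i$ may have small order. The missing ingredient is that, in the standing setup of this subsection, $u_i$ is homogeneous of degree $d_i\geq 2$ in the \emph{graded} braided Hopf algebra $\B_k$ (the strata $\Gc_j$ are homogeneous), so $v_i^m$ equals a nonzero scalar times $u_i^m g_i^{-m}\in \B_k^{md_i}\# H$; distinct powers thus lie in distinct homogeneous components and all nonzero powers are automatically independent. This is exactly what makes the kernel of $\ku[T]\to\ku\langle v_i\rangle$ homogeneous in the paper's proof, and it dissolves your stated worry. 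A second, related gap is your treatment of the polynomial case: asserting that ``$v^N$ behaves like a new primitive'' and iterating does not by itself exclude, say, $v^{N}\neq 0$ but $v^{M}=0$ for some $M>N$ not handled by that iteration. The Hopf-ideal formulation handles all cases uniformly: the minimal vanishing power $M$ must make $T^M$ primitive modulo the ideal, i.e.\ all intermediate Gaussian binomials $\binom{M}{j}_{q_i^{-1}}$ must vanish, which in characteristic $0$ forces $M=N_i$; hence the kernel is either $0$ (polynomial algebra) or $\langle T^{N_i}\rangle$ (truncation exactly at $N_i$). I would recommend reorganizing your argument around this universal-quotient formulation rather than trying to repair the element-wise induction.
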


Clearly, $q_i$ is a root of 1. If $q_i = 1$, then $\ku\langle v_i \rangle$ is always a
polynomial algebra.
For $q_i\neq 1$, it is possible to check  whether $\ku\langle v_i \rangle$ is truncated
in specific
examples.

\pf Notice that $\ku\langle v_i \rangle$ is a Hopf algebra in $\hyd$, with braiding
determined by $c(v_i\ot
v_i) =
q_i^{-1}(v_i\ot v_i)$.
Consider the polynomial algebra $\ku[T]$ as a braided Hopf algebra with the analogous
braiding.
Then the kernel of the epimorphism $\varsigma: \ku[T] \to \ku\langle v_i \rangle$, given
by $T\mapsto v_i$,
 is an homogeneous Hopf ideal of the braided Hopf algebra $\ku[T]$ spanned by a primitive
element.
Thus $\ker \varsigma = 0$ or $\langle T^M\rangle$ for some $M$, but $T^M$ is primitive
only when $M = 1$ or $M =\ord q_i^{-1} = N_i$.
\epf

In the following lemma we study the set $\Alg_{\cH_{k}}^{\cH_{k}}(X_k,\prova_k)$, which
is required for Step (1a) of the Strategy.

\begin{lema}\label{lem:caracterizar f-a}
Let $\psi_1,\psi_2\in\Alg_{\cH_{k}}^{\cH_{k}}(X_k,\prova_k)$. Fix $j$, $1\leq j\leq n$,
with
${\chi_j}_{|G(H)}\neq
\varep$.
 \begin{enumerate}\renewcommand{\theenumi}{\alph{enumi}}
 \renewcommand{\labelenumi}{(\theenumi)}
  \item\label{lem:caracterizar f-a-item-a}
 $\psi_1(v_j)=\psi_2(v_j)$.
 \item\label{lem:caracterizar f-a-item-b}
If $H$ is semisimple, then $\psi_1(v_j)=\psi_2(v_j)=\gamma(v_j)$.
 \end{enumerate}
\end{lema}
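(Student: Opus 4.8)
The plan is to determine each $\psi_\ell(v_j)$ up to a scalar multiple of $g_j^{-1}$ using $\cH_k$-colinearity and the cleftness of $\cA_k$, and then to annihilate that scalar using $\cH_k$-linearity together with the hypothesis ${\chi_j}_{|G(H)}\neq\varep$.

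First I would record how $v_j$ sits in $\mathcal{YD}^{\cH_k}_{\cH_k}$. Since $u_j\in\mP(\B_k)^{\chi_j}_{g_j}$, one has $u_j\in\mP_{g_j,1}(\cH_k)$ and $v_j=u_jg_j^{-1}\in\mP_{1,g_j^{-1}}(\cH_k)$, so the $\cH_k$-coaction on $v_j\in X_k$ (the restriction of $\Delta$) is $\rho(v_j)=v_j\ot g_j^{-1}+1\ot v_j$. Also $gu_jg^{-1}=\chi_j(g)\,u_j$ in $\cH_k=\B_k\#H$ for $g\in G(H)\subseteq\cH_k$, and $g_j\in Z(G(H))$ because $(g_j,\chi_j)$ is a YD-pair; hence the right adjoint action satisfies $\ad_r(g)(v_j)=\Ss(g)v_jg=\chi_j(g)^{-1}v_j$. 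Finally, $Y_k$ is an $\cH_k$-subcomodule of $X_k$, so $\psi_\ell|_{Y_k}\in\Alg^{\cH_k}(Y_k,\cA_k)$ and Remark \ref{rem:caracterizar f} gives scalars $c_j^{(\ell)}\in\ku$ with
\begin{align*}
\psi_\ell(v_j)=\gamma(v_j)-c_j^{(\ell)}\,g_j^{-1},\qquad \ell=1,2
\end{align*}
(equivalently, this shape follows directly: $\psi_\ell$ and $\gamma$ are $\cH_k$-colinear and unital, so $\psi_\ell(v_j)-\gamma(v_j)$ lies in $\{a\in\cA_k:\rho(a)=a\ot g_j^{-1}\}$, which equals $\ku\,\gamma(g_j^{-1})$ since $\gamma\colon\cH_k\to\cA_k$ is an isomorphism of right $\cH_k$-comodules).

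Now I would fix $g\in G(H)$ with $\chi_j(g)\neq1$, which exists by the hypothesis on ${\chi_j}_{|G(H)}$. By Proposition \ref{prop:properties-Ak} \eqref{item-b-prop:properties-Ak} the Miyashita--Ulbrich action of the group-like $g$ on $\cA_k$ is conjugation, $a\leftharpoonup g=g^{-1}ag$, where $H$ is identified with $\gamma(H)\subseteq\cA_k$. Since $\psi_\ell$ is $\cH_k$-linear, combining this with the first step,
\begin{align*}
\chi_j(g)^{-1}\psi_\ell(v_j)=\psi_\ell\big(\ad_r(g)(v_j)\big)=\psi_\ell(v_j)\leftharpoonup g=g^{-1}\psi_\ell(v_j)\,g.
\end{align*}
For \eqref{lem:caracterizar f-a-item-a} I subtract this identity for $\ell=1$ and $\ell=2$: writing $b=\psi_1(v_j)-\psi_2(v_j)=(c_j^{(2)}-c_j^{(1)})g_j^{-1}$, the $\gamma(v_j)$-contributions cancel and $b$ commutes with $g$ (it is a scalar times the central group-like $g_j^{-1}$), so $\chi_j(g)^{-1}b=g^{-1}bg=b$, hence $(\chi_j(g)^{-1}-1)b=0$ and $b=0$.

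For \eqref{lem:caracterizar f-a-item-b}, assume $H$ is semisimple. Then Proposition \ref{prop:properties-Ak} \eqref{item-cc-prop:properties-Ak} shows $\gamma$ is $H$-linear for the relevant actions, so $g^{-1}\gamma(v_j)g=\gamma\big(\ad_r(g)(v_j)\big)=\chi_j(g)^{-1}\gamma(v_j)$. Substituting $\psi_\ell(v_j)=\gamma(v_j)-c_j^{(\ell)}g_j^{-1}$ into the displayed identity, the $\gamma(v_j)$-terms cancel once more and one is left with $(\chi_j(g)^{-1}-1)c_j^{(\ell)}g_j^{-1}=0$, i.e.\ $c_j^{(\ell)}=0$; thus $\psi_\ell(v_j)=\gamma(v_j)$. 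The only inputs beyond routine computation are the two cited parts of Proposition \ref{prop:properties-Ak}, and I expect the only delicate point to be the bookkeeping around the identifications of $\B_k$ and $H$ inside $\cH_k$ and of $H$ with $\gamma(H)$ inside $\cA_k$, in particular in verifying $\ad_r(g)(v_j)=\chi_j(g)^{-1}v_j$; there is no genuine conceptual obstacle.
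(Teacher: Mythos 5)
Your proof is correct and follows essentially the same route as the paper's: both reduce $\psi_\ell(v_j)$ to the normal form $\gamma(v_j)-c_j^{(\ell)}g_j^{-1}$ via Remark \ref{rem:caracterizar f} (restriction to $Y_k$), and both exploit equivariance under the Miyashita--Ulbrich action of a group-like $g$ with $\chi_j(g)\neq 1$, invoking Proposition \ref{prop:properties-Ak} \eqref{item-cc-prop:properties-Ak} for part (b). The only cosmetic difference is that the paper packages the failure of $\gamma$ to be $H$-linear into an auxiliary map $a_j:G(H)\to\ku$ and solves $a_j(t)=(1-\chi_j(t))c_j^{(i)}$, whereas you subtract the two equivariance identities directly so that the $\gamma(v_j)$-terms cancel.
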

\pf (a) Since $(\gamma(v_j)\leftharpoonup t^{-1}-\chi_j(t)\gamma(v_j))g_j\in \cA_k^{\co
\cH_{k}}=\ku$, we see that there exists a map $a_j:G(H)\to \ku$  such that
\begin{align}\label{eq:aj}
 \gamma(v_j)\leftharpoonup t^{-1}=\chi_j(t)\gamma(v_j)+a_j(t)g_j^{-1}, \qquad t\in
G(H).
\end{align}
Note that  $a_j(1)=0$ and $a_j(ts)=a_j(s)+\chi_j(s)a_j(t)$.
Let $\varphi_i={\psi_i}_{|Y_k}\in\Alg^{\cH_{k}}(Y_k,\cA_k)$. Then
$\psi_i(v_j)=\gamma(v_j)-c_j^{(i)}g^{-1}_j$ by Remark \ref{rem:caracterizar
f}, for some $c_j^{(i)}\in\ku$, $i=1,2$. As $\psi_i$ is $\cH_{k}$-linear, we have
\begin{align*}
 0 =\psi_i(v_j)\leftharpoonup
t^{-1}-\chi_j(t)\psi_i(v_j) \overset{\eqref{eq:aj}}= \left(a_j(t)+\chi_j(t)c_j^{(i)} -c_j^ { (i) }
\right)g_j^ { -1 }.
\end{align*}
Thus $a_j(t)=(1-\chi_j(t))c_j^{(i)}$.
If
${\chi_j}_{|G(H)}\neq \varep$, then $c_j^{(1)}=c_j^{(2)}=\dfrac{a_j(t)}{1-\chi_j(t)}$
for $t\in G(H)$ with $\chi_j(t)\neq 1$. \eqref{lem:caracterizar f-a-item-b} $a_j=0$ by Proposition
\ref{prop:properties-Ak} \eqref{item-cc-prop:properties-Ak}.
\epf

Now, we study the set $\Alg^{\cH_{k}}(Y_k,\cA_k)$ for Step (1b) in the Strategy.

\begin{lema}\label{lem:caracterizar f-b}
Let $\varphi_i\in\Alg^{\cH_{k}}(Y_k,\cA_k)$ and such that
$\langle\varphi_i(Y_k^+)\rangle\neq
\cA_k$, $i=1,2$. Fix $j$, $1\leq j\leq n$, with
${\chi_j}_{|G(H)}\neq
\varep$.
\begin{enumerate}\renewcommand{\theenumi}{\alph{enumi}}
 \renewcommand{\labelenumi}{(\theenumi)}
  \item\label{lem:caracterizar f-b-item-a}
$\varphi_1(v_j)=\varphi_2(v_j)$.
 \item\label{lem:caracterizar f-b-item-b}
If $H$ is semisimple, then $\varphi_1(v_j)=\varphi_2(v_j)=\gamma(v_j)$.
 \end{enumerate}
\end{lema}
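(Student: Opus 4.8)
The plan is to deduce both assertions from Lemma~\ref{lem:caracterizar f-a}, by showing that under the present hypotheses every $\varphi\in\Alg^{\cH_k}(Y_k,\cA_k)$ with $\langle\varphi(Y_k^+)\rangle\neq\cA_k$ is the restriction to $Y_k$ of some $\psi\in\Alg^{\cH_k}_{\cH_k}(X_k,\cA_k)$; once this is available the statement is immediate, since $\psi_{|Y_k}=\varphi$ forces $\psi(v_j)=\varphi(v_j)$.

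First I would recall that $X_k=N(Y_k)={}^{\co\cH_{k+1}}\cH_k$ (Remark~\ref{rem:xk=nyk}) and that, by Corollaries~\ref{cor:coflat} and~\ref{cor:flat}, both Theorem~\ref{thm:gunther-teo4}---restricted to cleft objects, since the coradical $H$ of $\cH_k$ is mapped isomorphically onto that of $\cH_{k+1}$---and Theorem~\ref{thm:gunther-extendido} apply to the projection $\cH_k\twoheadrightarrow\cH_{k+1}$. These describe $\Cleft(\cH_{k+1})$ respectively by pairs $(A,\psi)$ with $\psi\in\Alg^{\cH_k}_{\cH_k}(X_k,A)$ and by admissible pairs $(A,\varphi)$ with $\varphi\in\Alg^{\cH_k}(Y_k,A)$, in both cases modulo the same equivalence $\sim$ and with the same inverse $\Phi\colon[B]\mapsto[(B\Box_{\cH_{k+1}}\cH_k,\ x\mapsto 1\otimes x)]$. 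Moreover the two maps $\Psi$ are compatible with the restriction $\psi\mapsto\psi_{|Y_k}$: if $\varphi=\psi_{|Y_k}$, then since $X_k=N(Y_k)$ is generated by the elements $\Ss(h\_1)\,y\,h\_2$ ($y\in Y_k$, $h\in\cH_k$) and $\psi(\Ss(h\_1)\,y\,h\_2)=\varphi(y)\leftharpoonup h$ by $\cH_k$-linearity of $\psi$ for the Miyashita--Ulbrich action, one checks that $A\psi(X_k^+)=A\varphi(Y_k^+)A$. Hence $\psi\mapsto\psi_{|Y_k}$ induces a bijection onto the admissible $\varphi$'s with $A$ fixed, so for the $\varphi_i$ of the statement we may pick $\psi_i\in\Alg^{\cH_k}_{\cH_k}(X_k,\cA_k)$ with $(\psi_i)_{|Y_k}=\varphi_i$, whence $\psi_i(v_j)=\varphi_i(v_j)$. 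Then \eqref{lem:caracterizar f-b-item-a} follows from Lemma~\ref{lem:caracterizar f-a}~\eqref{lem:caracterizar f-a-item-a} and \eqref{lem:caracterizar f-b-item-b} from Lemma~\ref{lem:caracterizar f-a}~\eqref{lem:caracterizar f-a-item-b}, both applied to $\psi_1,\psi_2$.

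The main obstacle is precisely this compatibility statement---equivalently, that an admissible $\varphi$ genuinely extends to an $\cH_k$-linear map on $N(Y_k)$. This is essentially what is proved, via the identification $N(Y)={}^{\co K}L$, in the proof of Theorem~\ref{thm:gunther-extendido} following \cite[Theorem~8]{G}, so in a full write-up I would invoke it there rather than redo it. For completeness I would also record a self-contained variant in the semisimple case: extend $\varphi_i$ to the subalgebra of $\cH_k$ generated by $Y_k$ and $G(H)$ using $t\,v_i\,t^{-1}=\chi_i(t)\,v_i$ together with Proposition~\ref{prop:properties-Ak}~\eqref{item-cc-prop:properties-Ak} (which gives $t\,\gamma_k(u_i)\,t^{-1}=\chi_i(t)\,\gamma_k(u_i)$ for $t\in G(H)$), and then repeat the computation of Lemma~\ref{lem:caracterizar f-a} verbatim with $t^{-1}\in G(H)$ in place of the Miyashita--Ulbrich action; the uniform argument is shorter, however, and in addition covers \eqref{lem:caracterizar f-b-item-a} for arbitrary cosemisimple $H$.
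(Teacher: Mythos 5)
Your argument is correct, but it takes a genuinely different route from the paper's. The paper does not extend $\varphi_i$ to $X_k$ at all: it observes that for $t\in G(H)$ the element $t\varphi_i(v_j)t^{-1}-\chi_j(t)\varphi_i(v_j)$ lies in the two-sided ideal $\langle\varphi_i(v_j)\rangle\subseteq\langle\varphi_i(Y_k^+)\rangle$, and then reruns the computation of Lemma \ref{lem:caracterizar f-a} --- conjugation by $t$ being the Miyashita--Ulbrich action of $t^{-1}$, which makes sense against any algebra map --- to identify this element as $\bigl(a_j(t)-(1-\chi_j(t))c_j^{(i)}\bigr)g_j^{-1}$, with $a_j$ as in \eqref{eq:aj}. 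Since $g_j^{-1}$ is invertible and the ideal is proper by hypothesis, the scalar vanishes, and the conclusion follows exactly as in Lemma \ref{lem:caracterizar f-a}. This is a two-line, self-contained argument. Your route instead produces an extension $\psi_i\in\Alg_{\cH_k}^{\cH_k}(X_k,\cA_k)$ of $\varphi_i$ and quotes Lemma \ref{lem:caracterizar f-a} verbatim; this does work, since the compatibility $\cA_k\psi(X_k^+)=\cA_k\varphi(Y_k^+)\cA_k$ together with the fact that both correspondences in Theorems \ref{thm:gunther-teo4} and \ref{thm:gunther-extendido} are bijections with the same inverse $\Phi$ yields such a $\psi_i$ after composing with a comodule-algebra automorphism of $\cA_k$ (which preserves the Miyashita--Ulbrich action). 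What your approach buys is a conceptual explanation of the hypothesis $\langle\varphi_i(Y_k^+)\rangle\neq\cA_k$: it is exactly the admissibility condition of Theorem \ref{thm:gunther-extendido}. The cost is that it invokes the full strength of both theorems, Remark \ref{rem:xk=nyk}, and the coflatness/faithful-flatness input, where the paper only needs the single identity \eqref{eq:aj} and the invertibility of $g_j^{-1}$.

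One caveat about your closing ``self-contained variant'': extending $\varphi_i$ multiplicatively to the subalgebra generated by $Y_k$ and $G(H)$ by imposing $t\varphi_i(v_j)t^{-1}=\chi_j(t)\varphi_i(v_j)$ is circular, because the well-definedness of that assignment is precisely the identity $a_j(t)=(1-\chi_j(t))c_j^{(i)}$ that one is trying to establish. The paper's proof sidesteps this by never asserting that identity a priori; it only records that the discrepancy is a scalar multiple of $g_j^{-1}$ lying in a proper ideal.
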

\pf
Notice that $t\varphi_i(v_j)t^{-1}-\chi_j(t)\varphi_i(v_j)\in \left\langle
\varphi_i(v_j)\right\rangle$. The computation in Lemma \ref{lem:caracterizar f-a} shows
that
$a_j(t)=(1-\chi_j(t))c_j^{(i)}$ since $\langle \varphi_i(v_j)\rangle\subseteq \langle
\varphi_i(Y_k)^+\rangle$.
\epf

\subsection{Some tools for diagonal braidings}\label{subsec:powers}

Let $V$ be a vector space with a basis $\xi_1,\dots,\xi_\theta$. Let $N_i \in \N$, $1\le i \le \theta$.
Let $\Omega=(\omega_{ij})_{1\leq i,j\leq \theta} \in \ku^{\theta\times \theta}$ such that $\omega_{ii}=1$ and $\omega_{ij}\omega_{ji} = 1$
for every $i, j$.
Consider the {\it quantum linear space} associated to $\Omega$, that is
\begin{align*}
\ku_{\Omega}[\xi_1,\dots,\xi_\theta]=T(V)/I_\Omega, \qquad \text{ for }\quad I_\Omega=\langle
\xi_i\xi_j-\omega_{ij}\xi_j\xi_i
\rangle_{1\leq i,j\leq \theta}.
\end{align*}

\medbreak

The following well-known lemma is useful to deal with the Strategy for
braidings of diagonal type.

\begin{lema}\label{lema:cociente power root vectors}
Fix $(\lambda_i)_{1\leq i\leq
\theta}\in\ku^\theta$ such that 
\begin{align}\label{eq:omega}
\lambda_i &= 0, & \text{when } \omega_{ij}^{N_i}&\neq 1 \text{ for some } j.
\end{align}
Let $S\subseteq I_\Omega$ and set $S' = S\cup \{\xi_i^{N_i}-\lambda_i\}_{1\leq i\leq
\theta}$. Then $T(V)/\langle S'\rangle
\neq 0$.
\end{lema}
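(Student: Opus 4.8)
The plan is to reduce the statement to the nonvanishing of one explicitly presented algebra and then produce a basis for it. Since $S\subseteq I_\Omega$, we have $\langle S'\rangle\subseteq I_\Omega+\langle \xi_i^{N_i}-\lambda_i : 1\le i\le\theta\rangle$, so there is an algebra surjection
\[
T(V)/\langle S'\rangle \twoheadrightarrow A:=\ku_{\Omega}[\xi_1,\dots,\xi_\theta]\big/\langle \xi_i^{N_i}-\lambda_i : 1\le i\le\theta\rangle .
\]
Hence it suffices to prove $A\neq 0$, and in fact I would prove the sharper claim that the residue classes of the monomials $\xi_1^{a_1}\cdots\xi_\theta^{a_\theta}$ with $0\le a_i<N_i$ form a $\ku$-basis of $A$; then $\dim A=\prod_i N_i\ge 1$.

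To prove the basis claim I would invoke the Diamond Lemma, presenting $A$ as $T(V)$ modulo the reduction system with rules $\xi_j\xi_i\mapsto\omega_{ij}^{-1}\xi_i\xi_j$ for $i<j$ (note $\omega_{ji}=\omega_{ij}^{-1}$) together with $\xi_i^{N_i}\mapsto\lambda_i$ for $1\le i\le\theta$. Ordering the monomials in $\xi_1,\dots,\xi_\theta$ first by total degree and then lexicographically with $\xi_1<\dots<\xi_\theta$, every rule rewrites a monomial as a $\ku$-linear combination of strictly smaller ones, and this order is a multiplicative well-order; so the Diamond Lemma applies and the irreducible words — precisely the $\xi_1^{a_1}\cdots\xi_\theta^{a_\theta}$ with $0\le a_i<N_i$ — form a basis of $A$ provided every ambiguity is resolvable.

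It then remains to resolve the ambiguities. The overlaps $\xi_k\xi_j\xi_i$ with $i<j<k$ among the commutation rules are handled by the routine scalar identity that already establishes the PBW property of $\ku_{\Omega}[\xi_1,\dots,\xi_\theta]$, and the self-overlap $\xi_i^{N_i+1}$ reduces to $\lambda_i\xi_i$ both ways. The remaining ambiguities, coming from the interaction of a commutation rule with a rule $\xi_i^{N_i}\mapsto\lambda_i$, are of the form $\xi_j\xi_i^{N_i}$ (with $i<j$) and $\xi_i^{N_i}\xi_j$ (with $j<i$): transporting $\xi_j$ across the block $\xi_i^{N_i}$ and then contracting the block yields $\lambda_i\,\omega_{ij}^{\mp N_i}\,\xi_j$, whereas contracting the block first yields $\lambda_i\xi_j$, and the two agree exactly when $\lambda_i(\omega_{ij}^{N_i}-1)=0$ — which is precisely the hypothesis \eqref{eq:omega}. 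Hence all ambiguities resolve, $A$ has the stated basis, $A\neq 0$, and a fortiori $T(V)/\langle S'\rangle\neq 0$.

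The only place the hypothesis enters, and the one step that needs care, is this last bookkeeping: the single identity forced by the mixed ambiguities turns out to be exactly \eqref{eq:omega}; everything else is standard. (Alternatively one can argue structurally: for $i$ with $\lambda_i\neq 0$, \eqref{eq:omega} says that $\xi_i^{N_i}$ is central in $\ku_{\Omega}[\xi_1,\dots,\xi_\theta]$, so these central elements may be specialised to the scalars $\lambda_i$ using the freeness of $\ku_{\Omega}[\xi_1,\dots,\xi_\theta]$ over the polynomial subalgebra they generate, and the remaining variables then truncated; but the Diamond Lemma argument is the most self-contained.)
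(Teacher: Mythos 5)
Your proof is correct, and it reaches the conclusion by a genuinely different route than the paper at the key step. Both arguments begin with the same reduction: since $S\subseteq I_\Omega$, it suffices to show that $A=\ku_{\Omega}[\xi_1,\dots,\xi_\theta]/\langle \xi_i^{N_i}-\lambda_i\rangle_{1\leq i\leq\theta}$ is nonzero. From there the paper argues structurally (essentially your parenthetical alternative): it first kills the $\xi_i^{N_i}$ with $\lambda_i=0$, which is harmless because the resulting ideal is proper and graded; then it observes that \eqref{eq:omega} makes the remaining $\xi_i^{N_i}$ central, lets $P$ be the polynomial algebra they generate, and induces the one-dimensional $P$-representation $\xi_i^{N_i}\mapsto\lambda_i$ up to the whole algebra to get a nonzero module on which $A$ acts. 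You instead run the Diamond Lemma on the full presentation and verify that the only nontrivial ambiguities are the mixed ones $\xi_j\xi_i^{N_i}$ and $\xi_i^{N_i}\xi_j$, whose resolvability is exactly the condition $\lambda_i(\omega_{ij}^{\pm N_i}-1)=0$, i.e.\ \eqref{eq:omega}; your computation of these overlaps is right, and the remaining overlaps (the braid-type ones $\xi_k\xi_j\xi_i$ and the self-overlaps $\xi_i^{N_i+m}$, $1\le m\le N_i-1$, not just $m=1$) do resolve as you say. Your approach buys more: it produces the explicit basis $\{\xi_1^{a_1}\cdots\xi_\theta^{a_\theta}:0\le a_i<N_i\}$ and hence $\dim A=\prod_i N_i$, which is stronger than mere nonvanishing; the paper's argument is shorter and sidesteps the combinatorial bookkeeping, but tacitly relies on the freeness of the truncated quantum linear space over $P$ for the induced module to be nonzero, a point your self-contained computation does not need to invoke.
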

\pf
It suffices to consider $S=I_\Omega$, as $T(V)/\langle S'\rangle \twoheadrightarrow T(V)/\langle I_\Omega'\rangle$. We can assume that $\lambda_i= 0$,
$i=1,\dots, k$ and $\lambda_i\neq 0$, if $i>k$, for some $0\leq k\leq \theta$. Set
$I=\langle I_\Omega\cup \{\xi_i^{N_i}\}_{1\leq i\leq
k}\rangle$; this is a proper $\N_0^\theta$-graded ideal and therefore the quotient
$\ku_\Omega[\xi_1,\dots,\xi_\theta]/\langle \xi_i^{N_i}\rangle_{1\leq i\leq
k}=T(V)/I\neq 0$. 

By \eqref{eq:omega} the elements $\xi_i^{N_i}$, $k+1\leq i\leq \theta$ are central in
$\ku_\Omega[\xi_1,\dots,\xi_\theta]$. Moreover, the subalgebra $P\subset T(V)/I$ generated by
their images is a polynomial algebra. We consider the 1-dimensional representation $M=\ku$ of
$P$ given by $\xi_i^{N_i}\cdot 1=\lambda_i$ and $M'=T(V)/I \ot_P M$ the induced
representation of $T(V)/I$. Notice that the algebra map $T(V)/I\to \End M'$ factors
through $T(V)/\langle I_\Omega'\rangle$ and hence this algebra is nonzero.
\epf

We will also make use of the following remark.

\begin{Rem}\label{rem:basis-quotient}
Let  $\{r_i\}_{i\in I}$ be a family of monomials in $ T(V)$ and set
$J=\langle r_i\rangle_{i\in I}$. Then (the image of) the set of
monomials in $T(V)$ that do not contain an $r_i$ as a subword is a linear basis of
$T(V)/J$. Indeed, a basis of $T(V)$ is given by the collection of all monomials. This set
can be splitted in two subsets: the monomials containing an $r_i$ as a subword and those
that do not. The first subset is a linear basis of $J$.
\end{Rem}

\subsection{An example of diagonal type}

Assume $\ku=\CC$. Let $\zeta\in\ku$  be a primitive $9^{th}$-root of unity. We apply our
strategy to
classify the liftings of the Nichols algebra
associated to the diagram
\begin{align}\label{eqn:diagram}
 &\xymatrix{ \,^{-\zeta} \circ\ar @<-0.01ex> @{-}[r]^{\zeta^7}  &
\circ\,^{\zeta^3}}
\end{align}
of \cite[Table 1, row 9]{He-adv}. Consider a matrix $(q_{ij})_{1\leq i,j\leq2}$
corresponding to \eqref{eqn:diagram}, that is $q_{11}=-\zeta$, $q_{22}=\zeta^3$ and
$q_{12}q_{21}=\zeta^7$. Let $\Gamma$ be a finite
group such that there is a
realization of this braiding, {\it i.e.} there are $g_1,g_2\in\Gamma$,
$\chi_1,\chi_2\in\widehat{\Gamma}$ with $\chi_j(g_i)=q_{ij}$, $1\leq i,j\leq2$. Set
$H=\ku \Gamma$ and $V\in\ydh$ the associated Yetter-Drinfeld module: $V$ has a
basis
$\{x_1,x_2\}$ with $x_i\in V^{\chi_i}_{g_i}$, $i=1,2$. Let
\begin{align}
\label{eqn:x12} x_{12}&=x_1x_2-q_{12}x_2x_1, & x_{112}&=x_1x_{12}-q_{11}q_{12}x_{12}x_1,
\\
\notag  x_{1112}&=x_1x_{112}-q_{11}^2q_{12}x_{112}x_1, &
x_{122}&=x_{12}x_2-q_{12}q_{22}x_2x_{12},
\\
\notag
x_{1,122}&=x_1x_{122}-q_{11}q_{12}^2x_{122}x_1.
&&
\end{align}
By \cite[Example 2.5]{Ang}, $\toba(V)$ is presented by generators $x_1,x_2$ and
relations $$x_1^{18}=x_2^3=x_{12}^{18}=x_{1112}=x_{1,122}-a\, x_{12}^2=0,$$
for $a=\zeta^7q_{12}(1+\zeta)^{-1}$. We fix the following stratification:
\begin{align}\label{eqn:stratification}
\Gc_0&=\{x_1^{18},x_2^3\}, & \Gc_1&=\{x_{1,122}-a\,
x_{12}^2\}, & \Gc_2&=\{x_{1112}\}, & \Gc_3&=\{x_{12}^{18}\}.
\end{align}
Set $\prov=\prov_4=\B(V)\# H$. Let $\lambda_1,\lambda_2 \in \ku$ be subject to:
\begin{align}\label{eqn:l1-l2}
&\lambda_1=0 \quad \text{if }\, \chi_1^{18}\neq\varep, && \lambda_2=0 \quad \text{if }\,
\chi_2^{3}\neq\varep.
\end{align}
Let $\prova(\lambda_1,\lambda_2)$ be the quotient of $\mT(V)$
by the ideal generated by
\begin{align}\label{eqn:no-moco}
& x_1^{18} - \lambda_1, && x_2^3 - \lambda_2,
&& x_{1,122}-a\, x_{12}^2, && x_{1112}.
\end{align}
\begin{Rem}\label{rem:A-no-cero}
$\prova(\lambda_1,\lambda_2)\neq 0$.
\end{Rem}

\pf
By Remark \ref{rem:comod-alg-boson} \eqref{item-a-rem:comod-alg-boson}, $\prova(\lambda_1,\lambda_2)\simeq T(V) / J \# H$
where $J$ is the ideal generated by the relations \eqref{eqn:no-moco}. Set $\xi_i = x_i$, $i = 1,2$, $\omega_{12} = q_{12} = \omega_{21}^{-1}$.
Then, in the notation of Lemma \ref{lema:cociente power root vectors}, $S := \{x_{1,122}-a\, x_{12}^2,  x_{1112}\} \subset I_{\Omega}$ by \eqref{eqn:x12}.
Condition \eqref{eq:omega} is tantamount to \eqref{eqn:l1-l2}, and $J = \langle S'\rangle$. Thus Lemma \ref{lema:cociente power root vectors} applies.
\epf

Let $\mL(\lambda_1,\lambda_2)$ be the quotient of $\mT(V)$
by the ideal generated by
\begin{align*}
& x_1^{18}-\lambda_1(1- g_1^{18}), && x_2^3-\lambda_2(1- g_2^3),
&& x_{1,122}-a\, x_{12}^2, && x_{1112}.
\end{align*}
Notice that this is a Hopf ideal, as $\Gc_0\cup \Gc_1\cup \Gc_2\subseteq \mP(T(V))$.

\smallbreak

We will now follow the strategy in Subsection \ref{subsec:strategy} in order to find all
the liftings of $\B(V)$ over $\Gamma$. We stick to the notation therein.

\begin{prop}\label{pro:galois-abeliana}
\begin{enumerate}\renewcommand{\theenumi}{\alph{enumi}}
\renewcommand{\labelenumi}{(\theenumi)}
\item $\prova(\lambda_1,\lambda_2)$ is a right $\prov_3$-Galois object with coaction
induced by the comultiplication in $\mT(V)$. Moreover,
\begin{align*}
 \Lambda_3=\big\{\prova(\lambda_1,\lambda_2):\, \lambda_1,\lambda_2 \text{ as in }
\eqref{eqn:l1-l2}\big\}.
\end{align*}
\item $L(\prova(\lambda_1,\lambda_2),\prov_3)\simeq\mL(\lambda_1,\lambda_2)$.
\item Let $\lambda_3\in\ku$ be subject to:
\begin{align}\label{eqn:l3}
 \lambda_3=0 \quad \text{if }\, \chi_1^{18}\chi_2^{18}\neq\varep.
\end{align}
Then the algebra $
\prova(\lambda_1,\lambda_2,\lambda_3):=\prova(\lambda_1,\lambda_2)/\langle
x_{12}^{18}-\lambda_3\rangle $
is a right $\prov$-Galois object.  Moreover,
\begin{align*}
 \qquad\Lambda_4=\big\{\prova(\lambda_1,\lambda_2,\lambda_3):\, \lambda_1,\lambda_2 \text{
as in } \eqref{eqn:l1-l2}\text{ and }\lambda_3 \text{ as in } \eqref{eqn:l3}\big\}.
\end{align*}
\end{enumerate}
\end{prop}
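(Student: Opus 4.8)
The plan is to run the Strategy of Subsection \ref{subsec:strategy}, in its alternative (1b), along the chain of Hopf algebra quotients $\mT(V) = \prov_0 \twoheadrightarrow \prov_1 \twoheadrightarrow \prov_2 \twoheadrightarrow \prov_3 \twoheadrightarrow \prov_4 = \prov$ attached to the stratification \eqref{eqn:stratification}, starting from $\Lambda_0 = \{\mT(V)\}$ with section $\gamma_0 = \id$. Each $\prov_k$ is pointed with coradical $\ku\Gamma$, so $\Cleft(\prov_k) = \Gal(\prov_k)$; moreover $\prov_k$ is $\prov_{k+1}$-coflat and faithfully flat over $N(Y_k)$ by Corollaries \ref{cor:coflat}, \ref{cor:flat} and Remark \ref{rem:xk=nyk}, hence Theorem \ref{thm:gunther-extendido} is available at every step. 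Parts (a) and (b) come from running the steps $k=0,1,2$; part (c) is the terminal step $k=3$.

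For the steps $k=0,1,2$: at $k=0$ we take $Y_0 = \Ss(\ku\langle x_1^{18}, x_2^3\rangle)$; since $q_{11}$, resp.\ $q_{22}$, has order $18$, resp.\ $3$, the elements $x_1^{18}, x_2^3$ are skew-primitive in $\mT(V)$, so by Remark \ref{rem:caracterizar f} the maps $\varphi \in \Alg^{\prov_0}(Y_0, \mT(V))$ are parametrized by pairs $(\lambda_1, \lambda_2) \in \ku^2$, with quotient $\mT(V)/\langle x_1^{18} - \lambda_1, x_2^3 - \lambda_2\rangle$; the admissibility \eqref{eq:gunther2} holds if and only if \eqref{eqn:l1-l2} does — the ``if'' being Remark \ref{rem:A-no-cero}, and the ``only if'' because, when $\lambda_i \neq 0$ while the relevant character is nontrivial, conjugating the defining relation by a suitable $g \in \Gamma$ forces $1$ into the ideal. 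At $k=1,2$ the strata $\Gc_1 = \{x_{1,122} - a\,x_{12}^2\}$ and $\Gc_2 = \{x_{1112}\}$ consist of a single skew-primitive element of weight $(g_1^2g_2^2, \chi_1^2\chi_2^2)$, resp.\ $(g_1^3g_2, \chi_1^3\chi_2)$; a short computation with $q_{11} = -\zeta$, $q_{22} = \zeta^3$, $q_{12}q_{21} = \zeta^7$ shows both characters are nontrivial on $\langle g_1, g_2\rangle \subseteq \Gamma$, so Lemma \ref{lem:caracterizar f-b} \eqref{lem:caracterizar f-b-item-b} forces the only admissible $\varphi$ to be ${\gamma_k}_{|Y_k}$, no new parameter appears, and \eqref{eq:gunther2} is automatic since the final quotient is nonzero by Remark \ref{rem:A-no-cero}. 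This gives $\Lambda_3 = \{\prova(\lambda_1, \lambda_2)\}$ as in (a). For (b), since $\Gc_0 \cup \Gc_1 \cup \Gc_2 \subseteq \mP(T(V))$ the skew-primitive liftings $\widetilde u_i$ of \eqref{eqn:primitivo2} are just the images of these relations, so iterating Corollary \ref{cor:L-cociente} from $\mL_0 = L(\mT(V), \mT(V)) \simeq \mT(V)$ gives $L(\prova(\lambda_1, \lambda_2), \prov_3) \simeq \mL(\lambda_1, \lambda_2)$.

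For part (c), run the terminal step $k=3$ with $Y_3 = \Ss(\ku\langle x_{12}^{18}\rangle) \subseteq \prov_3$. First I would check — from the presentation of $\toba(V)$ in \cite{Ang}, or directly using $x_1^{18} = x_2^3 = x_{1112} = x_{1,122} - a\,x_{12}^2 = 0$ in $\B_3$ — that $x_{12}^{18}$ is skew-primitive in $\prov_3$, of weight $(g_1^{18}g_2^{18}, \chi_1^{18}\chi_2^{18})$; then $Y_3$ is a right coideal subalgebra and Remark \ref{rem:caracterizar f} shows that every $\varphi \in \Alg^{\prov_3}(Y_3, \prova_3)$ has the form $\varphi(\Ss(x_{12}^{18})) = \gamma_3(\Ss(x_{12}^{18})) - \lambda_3(g_1^{18}g_2^{18})^{-1}$, with associated quotient $\prova_3/\langle x_{12}^{18} - \lambda_3\rangle = \prova(\lambda_1, \lambda_2, \lambda_3)$. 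Once one knows for which $\lambda_3$ the condition \eqref{eq:gunther2}, i.e.\ $\prova(\lambda_1, \lambda_2, \lambda_3) \neq 0$, holds, Theorem \ref{thm:gunther-extendido} yields $\prova(\lambda_1, \lambda_2, \lambda_3) \in \Cleft(\prov) = \Gal(\prov)$ and the stated description of $\Lambda_4$.

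I expect the nonvanishing of $\prova(\lambda_1, \lambda_2, \lambda_3)$ to be the main obstacle: unlike for $\prova(\lambda_1, \lambda_2)$, the relation $x_{12}^{18} - \lambda_3$ is not directly covered by Lemma \ref{lema:cociente power root vectors}, as $x_{12}$ is not a generator of $T(V)$. The plan there is (i) to note that $x_{12}^{18} - \lambda_3$ can be a defining relation of a nonzero algebra only when $x_{12}^{18}$ is central in the relevant quotient of $T(V)$, which forces \eqref{eqn:l3}; and (ii) conversely, assuming \eqref{eqn:l3}, to deform the PBW basis $\{x_2^a x_{12}^b x_1^c\}$ of $\B_3$ by a filtered--graded argument and use the centrality of $x_{12}^{18}$ to exhibit $\{x_2^a x_{12}^b x_1^c : 0 \le a < 3,\ 0 \le b < 18,\ 0 \le c < 18\}$ as a linear basis of $T(V)/\langle x_1^{18} - \lambda_1,\, x_2^3 - \lambda_2,\, x_{1112},\, x_{1,122} - a\,x_{12}^2,\, x_{12}^{18} - \lambda_3\rangle$ (using Remark \ref{rem:comod-alg-boson} \eqref{item-a-rem:comod-alg-boson} to pass between $\prova(\lambda_1,\lambda_2,\lambda_3)$ and its coinvariants), whence $\prova(\lambda_1, \lambda_2, \lambda_3) \neq 0$ — this is the analogue here of the delicate dimension count of \cite{AS3}. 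Finally, checking that distinct admissible parameters give inequivalent objects, so that $\Lambda_3$ and $\Lambda_4$ are exactly the displayed families, is routine.
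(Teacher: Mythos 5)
Your treatment of the steps $k=0,1,2$ follows the paper's proof essentially verbatim: the same coideal subalgebras $Y_k$, the same use of Remark \ref{rem:caracterizar f} and Lemma \ref{lem:caracterizar f-b} to pin down the admissible $\varphi$'s, and nonvanishing via the projection onto $\prova(\lambda_1,\lambda_2)$ together with Remark \ref{rem:A-no-cero}. One detail you skip: to prescribe an algebra map on $Y_k=\ku\langle v\rangle$ freely on $v$ one must know that $Y_1$ and $Y_2$ are honest (non-truncated) polynomial algebras, which the paper checks by verifying $(x_{1,122}-a\,x_{12}^2)^9\neq 0$ and $x_{1112}^6\neq 0$ via Lemma \ref{lem:X} and Remark \ref{rem:basis-quotient}.

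The genuine gap is in part (c). Running the last stratum through alternative (1b) obliges you to verify $\langle\varphi(Y_3^+)\rangle\neq\prova_3$, i.e.\ $\prova(\lambda_1,\lambda_2,\lambda_3)\neq 0$; you correctly flag this as the main obstacle but do not resolve it. Your sketched plan rests on the PBW basis $\{x_2^a x_{12}^b x_1^c\}$ of $\B_3$, which is not a basis: since the minimal presentation of $\toba(V)$ contains $x_{1112}=0$ rather than $x_{112}=0$, the root vector $x_{112}$ is nonzero (already $\dim\toba(V)$ in $\N^2$-degree $(2,1)$ is $3$, while your monomials span only a $2$-dimensional subspace there), so any PBW basis must include powers of $x_{112}$ and a filtered--graded argument built on your list cannot close. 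Moreover this route reintroduces exactly the delicate dimension count of \cite{AS3} that the Strategy is designed to avoid. The paper instead switches to alternative (1a) for this stratum: $x_{12}^{18}$ is central in $\prov_3$ (verified with \cite{GAP}), hence $Y_3$ is normal and $X_3=Y_3$, and Theorem \ref{thm:gunther-teo4} applies; its bijective correspondence produces $\prova_3/\prova_3\psi(X_3^+)$ directly as a Galois object, so nonvanishing is automatic and the only remaining work is the coaction formula \eqref{eqn:rho-y12} and the determination of $\Alg_{\cH_3}^{\cH_3}(X_3,\cA_3)$ via Lemma \ref{lem:caracterizar f-a} and Remark \ref{rem:caracterizar f}. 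Finally, your ``only if'' for \eqref{eqn:l3} should come from conjugating $x_{12}^{18}-\lambda_3$ by $g\in\Gamma$ (as you argue at $k=0$), not from centrality of $x_{12}^{18}$, which holds unconditionally.
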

\pf
Following the Strategy in \ref{subsec:strategy}, we start by constructing the set
$\Lambda_1$. For this we use
(1b). Consider the subalgebra $Y_0$ of $\prov_0=\mT(V)$ generated by $x_1^{18}g_1^{-18}$
and $x_2^3g_2^{-3}$. This is a free associative algebra in two generators. Then we have
$\Alg^{\cH_0}(Y_0,\prova_0)\cong \ku^2$, since every map is determined by its value on
$x_1^{18}g_1^{-18}$ and $x_2^3g_2^{-3}$ and these values must be
$x_1^{18}g_1^{-18}-\lambda_1\,g_1^{-18}$
and $x_2^3g_2^{-3}-\lambda_2\,g_2^{-3}$, for some $\lambda_1,\lambda_2\in\ku$, by Remark
\ref{rem:caracterizar f}. Then $\Lambda_1$ is the set of all algebras
$\prova_1(\lambda_1,\lambda_2)$ obtained as $\mT(V)/\langle
x_1^{18} - \lambda_1,  x_2^3 - \lambda_2\rangle$, for $\lambda_1,\lambda_2$ subject to
\eqref{eqn:l1-l2} by Lemma \ref{lem:caracterizar f-b}. Indeed, these algebras are nonzero
since they project over
$\prova(\lambda_1,\lambda_2)$ which is nonzero by Remark
\ref{rem:A-no-cero}. We denote by $y_1, y_2$ the images of the generators $x_1,
x_2$ in each one of these quotients.

For $\Lambda_2$ we use again (1b). Set $Y_1=\ku\langle (x_{1,122}-a\,
x_{12}^2)g_1^{-2}g_2^{-2}\rangle\subset \prov_1$. Notice that
$\chi_1^2\chi_2^2(g_1^2g_2^2)=\zeta^8$
so $\chi_1^2\chi_2^2\neq\varep$. It follows that $Y_1$ is a polynomial algebra. Indeed, by
Lemma \ref{lem:X} we need to check that $z=\left(x_{1,122}-a\,
x_{12}^2\right)^9\neq 0$. Now, $z$ is a linear combination of monomials
containing $(x_1^2x_2^2)^9$ with coefficient 1. So it is nonzero by Remark
\ref{rem:basis-quotient}. Lemma \ref{lem:caracterizar
f-b}
implies that
\begin{align*}
\left(x_{1,122}-a\,x_{12}^2\right)g_1^{-2}g_2^{-2} & \mapsto
\gamma_1\left(\left(x_{1,122}-a\, x_{12}^2\right)g_1^{-2}g_2^{-2}\right),
\end{align*}
is the unique possible map in $\Alg^{\cH_1}(Y_1,\prova_1)$. Let $y_{1,122}$, $y_{12}\in
\prova_1$ be defined as in \eqref{eqn:x12}. It is easy to see that
\begin{align*}
\gamma_1\left(\left(x_{1,122}-a\,
x_{12}^2\right)g_1^{-2}g_2^{-2}\right)&=\left(y_{1,122}-a\,
y_{12}^2\right)g_1^{-2}g_2^{-2}.
\end{align*}
Indeed,
$\gamma_1\left(\left(x_{1,122}-a\,
x_{12}^2\right)g_1^{-2}g_2^{-2}\right)=\left(y_{1,122}-a\,
y_{12}^2\right)g_1^{-2}g_2^{-2}-c\, g_1^{-2}g_2^{-2}$, for some $c\in \ku$ by
$\cH_1$-colinearity but $c=0$ because $\gamma_1$ is  $H$-linear. Then $\Lambda_2$ is
composed of the algebras
$\prova_2(\lambda_1,\lambda_2)=\prova_1(\lambda_1,\lambda_2)/\langle y_{1,122}-a\,
y_{12}^2\rangle$ with $\lambda_1,\lambda_2$ subject to
\eqref{eqn:l1-l2}. These are nonzero as they project over
$\prova(\lambda_1,\lambda_2)$.

For $\Lambda_3$ we also use (1b). Set $Y_2=\ku\langle
x_{1112}g_1^{-3}g_2^{-1}\rangle\subset \prov_2$.
We have that $\chi_1^3\chi_2(g_1^3g_2)=-\zeta^6$,
so $\chi_1^3\chi_2\neq\varep$. As above, $Y_2$ is a polynomial algebra. Again, by
Lemma \ref{lem:X} it is enough to check that $x_{1112}^6\neq 0$. For this,
let $F$ be the set composed of the monomials $x_1^{18}$, $x_2^3$ and those appearing in
the
expression of $x_{1,122}-a\, x_{12}^2$. Set $J=\langle F\rangle\subset T(V)$, then there
exists a projection $\cH_2\twoheadrightarrow T(V)/J$. As $x_{1112}^6$ is a linear
combination of monomials containing $(x_1^3x_2)^6$ with coefficient 1, Remark
\ref{rem:basis-quotient} implies $x_{1112}^6\neq 0$ in  $T(V)/J$ and hence it is nonzero
in $\cH_2$. Thus Lemma \ref{lem:caracterizar f-b}
implies that $x_{1112}g_1^{-3}g_2^{-1}\mapsto\gamma_2(x_{1112}g_1^{-3}g_2^{-1})$ is the
unique possible map in
$\Alg^{\cH_2}(Y_2,\prova_2)$. Also, it is easy to see that
$\gamma_2(x_{1112}g_1^{-3}g_2^{-1})=y_{1112}g_1^{-3}g_2^{-1}$,
for $y_{1112}$ defined as in \eqref{eqn:x12}. We have already seen that the quotients
$\cA_2/\langle y_{1112}\rangle=\prova(\lambda_1,\lambda_2)$ are nonzero.
We obtain that $\Lambda_3$ is the set of all the algebras
$\prova(\lambda_1,\lambda_2)$ and (a) follows. Now (b) holds by 
Corollary \ref{cor:L-cociente}.

For $\Lambda_4$, we use (1a), since $\ku\langle x_{12}^{18}\rangle$ is a normal subalgebra
of $\cH_2$. This follows because $x_{12}^{18}$ is in the center of $\prov_3$, which can
be proved using \cite{GAP}, see also \cite{AAGI}. By Lemma \ref{lem:X},
$X_3$ is a
polynomial algebra. Using \cite{GAP} again\footnote{This coaction is computed with
\cite{GAP} using the method described in \cite[Appendix]{GIV}.},  we see that
\begin{align}\label{eqn:rho-y12}
 \rho_3(y_{12}^{18})=y_{12}^{18}\ot 1 + g_{1}^{18}g_2^{18}\ot x_{12}^{18}.
\end{align}
Hence, $\gamma_3(x_{12}^{18})=y_{12}^{18}+c$, for some $c\in\ku$. Now, if
$\chi_1^{18}\chi_2^{18}\neq \varep$, then $c=0$ and there is a unique
map in $\Alg_{\cH_3}^{\cH_3}(X_3,\cA_3)$, determined by
$x_{12}^{18}g_{1}^{-18}g_2^{-18}\mapsto y_{12}^{18}g_{1}^{-18}g_2^{-18}$ by Lemma
\ref{lem:caracterizar f-a}. On
the other hand, if $\chi_1^{18}\chi_2^{18}= \varep$, then it follows by Remark
\ref{rem:caracterizar f}  that $\Alg_{\cH_3}^{\cH_3}(X_3,\cA_3)\cong \ku$, since for each
$\lambda_3\in\ku$, $x_{12}^{18}g_{1}^{-18}g_2^{-18}\mapsto
y_{12}^{18}g_{1}^{-18}g_2^{-18}-\lambda_3\,g_{1}^{-18}g_2^{-18}$ induces an algebra
morphism $X_3\to \cA_3$ in $\mathcal{YD}_{\cH_3}^{\cH_3}$.
Hence  (c) follows.
\epf

Let $\mL(\lambda_1,\lambda_2,\lambda_3)$ be the quotient of $\mL(\lambda_1,\lambda_2)$ by
the ideal generated by
$$
x_{12}^{18}-\lambda_3(1-g_1^{18}g_2^{18}).
$$
In the next theorem we show that this is a Hopf ideal and that the family of
Hopf algebras $\mL(\lambda_1,\lambda_2,\lambda_3)$ exhausts the list of liftings of
$\B(V)$ over $\Gamma$. In particular, every lifting is a cocycle deformation of $\B(V)\#
\ku \Gamma$.
\begin{Thm}\label{teo:lifting-abeliano}
\begin{enumerate}\renewcommand{\theenumi}{\alph{enumi}}
\renewcommand{\labelenumi}{(\theenumi)}
\item $\mL(\lambda_1,\lambda_2,\lambda_3)$ is a cocycle deformation of $\prov$.
\item $\mL(\lambda_1,\lambda_2,\lambda_3)$ is a lifting of $\B(V)$ over $\Gamma$.
\item Reciprocally, if $L$ is a lifting of $\B(V)$ over $\Gamma$, then there are
$\lambda_1,\lambda_2,\lambda_3$ such that $L\simeq \mL(\lambda_1,\lambda_2,\lambda_3)$.
\end{enumerate}
\end{Thm}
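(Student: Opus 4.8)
\emph{Part (a).} By Proposition \ref{pro:galois-abeliana}(c) we have $\prova(\lambda_1,\lambda_2,\lambda_3)\in\Lambda_4\subset\Cleft(\prov)$, so it is a $\prov$-cleft object; hence $\prova(\lambda_1,\lambda_2,\lambda_3)\simeq\prov_{(\sigma)}$ for some $2$-cocycle $\sigma:\prov\ot\prov\to\ku$, and therefore $L(\prova(\lambda_1,\lambda_2,\lambda_3),\prov)\simeq L(\prov_{(\sigma)},\prov)\simeq\prov_\sigma$ by \cite[Theorem 3.9]{S}; this is a cocycle deformation of $\prov$. To identify this Hopf algebra, I apply Corollary \ref{cor:L-cociente} to the top stratum $\Gc_3=\{x_{12}^{18}\}$: by the computation underlying \eqref{eqn:rho-y12}, $x_{12}^{18}\in\mP_{g_1^{18}g_2^{18},1}(\prov_3)$, so the setting of Subsection \ref{subsec:coinvariants} applies; with $\mL_3=L(\prova(\lambda_1,\lambda_2),\prov_3)\simeq\mL(\lambda_1,\lambda_2)$ from Proposition \ref{pro:galois-abeliana}(b), the skew-primitive element $\widetilde u$ of \eqref{eqn:primitivo2} is --- again by \eqref{eqn:rho-y12}, together with Propositions \ref{pro:projection} and \ref{prop:properties-Ak} --- the image of $x_{12}^{18}$ in $\mL(\lambda_1,\lambda_2)$. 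Thus $L(\prova(\lambda_1,\lambda_2,\lambda_3),\prov)\simeq\mL(\lambda_1,\lambda_2)/\langle x_{12}^{18}-\lambda_3(1-g_1^{18}g_2^{18})\rangle=\mL(\lambda_1,\lambda_2,\lambda_3)$; in particular the displayed ideal is a Hopf ideal, so $\mL(\lambda_1,\lambda_2,\lambda_3)$ is a Hopf algebra and (a) follows.

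\emph{Part (b).} By Proposition \ref{prop:properties-Ak}(c) the cleft object $\prova(\lambda_1,\lambda_2,\lambda_3)$ admits a section $\gamma_4:\prov\to\prova(\lambda_1,\lambda_2,\lambda_3)$ with ${\gamma_4}_{|H}\in\Alg(H,\prova(\lambda_1,\lambda_2,\lambda_3))$; the cocycle $\sigma$ induced by $\gamma_4$ then satisfies $\sigma_{|H\ot H}=\varep\ot\varep$, so Proposition \ref{pro:deformations-liftings}(b) and (c) give $\gr_{\Fg}\prov_\sigma\simeq\B(V)\#H$. Since $\B(V)$ is a Nichols algebra, the filtration $\Fg$ is the coradical filtration (Subsection \ref{subsect:cocycles-graded}); combining this with (a) we obtain $\gr\mL(\lambda_1,\lambda_2,\lambda_3)\simeq\B(V)\#\ku\Gamma$, that is, $\mL(\lambda_1,\lambda_2,\lambda_3)$ is a lifting of $\B(V)$ over $\Gamma$.

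\emph{Part (c).} Let $L$ be a lifting of $\B(V)$ over $\Gamma$, with lifting map $\phi:\mT(V)\to L$. The plan is to show that $M=\langle\Gc\rangle$ is a good module of relations for $\B(V)$ and that, for suitable $\lambda_1,\lambda_2,\lambda_3$, the ideal $\cI_M=\langle m-\iota\phi(m):m\in M\rangle$ of Theorem \ref{cor:generadors of ker phi} is exactly the defining ideal of $\mL(\lambda_1,\lambda_2,\lambda_3)$; the statement is then immediate from that theorem. First, the four generators in $\Gc_0\cup\Gc_1\cup\Gc_2$ are primitive already in $T(V)$: for $x_1^{18}$ and $x_2^3$ because $q_{11}=-\zeta$ and $q_{22}=\zeta^3$ have orders $18$ and $3$; for $x_{1112}$ and $x_{1,122}-a\,x_{12}^2$ because they are primitive in $\B_2$, resp.\ $\B_1$, while, for degree reasons (these elements involve at most two copies of $x_2$ and the tensor factors of their coproducts have degree $\le 3$), the relations $x_1^{18}$, $x_2^3$ cannot occur in those coproducts. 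So each is compatible with $\phi$ and condition \eqref{eq:hypothesis generadors of ker phi} holds trivially among $\Gc_0,\Gc_1,\Gc_2$ in the degree ordering. Then by Lemma \ref{lema:liftings gral}\,\eqref{item:pVH phiM} and \eqref{item:pH phiM}, inspecting $\ydh$-types shows there is no nonzero morphism from the span of any of these generators to $V$, whence $\phi(x_1^{18})=\lambda_1(1-g_1^{18})$, $\phi(x_2^3)=\lambda_2(1-g_2^3)$, $\phi(x_{1112})=0$ and $\phi(x_{1,122}-a\,x_{12}^2)=0$, the scalars being forced by comparing the $H$-actions on the two sides of each relation --- this yields \eqref{eqn:l1-l2} and the vanishing of the Serre-type deformations.

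It remains to handle the top stratum $\Gc_3=\{x_{12}^{18}\}$. By the previous step $\cI_{M'}$ is a Hopf ideal and $\mT(V)/\cI_{M'}=\mL(\lambda_1,\lambda_2)$, where $M'=\langle\Gc_0\cup\Gc_1\cup\Gc_2\rangle$; in this Hopf algebra $x_{12}^{18}\in\mP_{g_1^{18}g_2^{18},1}$ by the computation behind \eqref{eqn:rho-y12} (the same one used in (a)), which is precisely condition \eqref{eq:hypothesis generadors of ker phi} for the pair $M'\subset M$. Hence $M$ is a good module of relations and Theorem \ref{cor:generadors of ker phi} gives $L\simeq\mT(V)/\cI_M$. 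Moreover $\phi(x_{12}^{18})$, being $(g_1^{18}g_2^{18},1)$-skew-primitive modulo $\cI_{M'}$ and having no $\ydh$-component in $V$ (again by types), equals $\lambda_3(1-g_1^{18}g_2^{18})$ for a scalar $\lambda_3$ satisfying \eqref{eqn:l3}; therefore $\cI_M$ is the defining ideal of $\mL(\lambda_1,\lambda_2,\lambda_3)$ and $L\simeq\mL(\lambda_1,\lambda_2,\lambda_3)$. The main obstacle is this last part --- proving that $x_{12}^{18}$ remains skew-primitive modulo the lower relations, i.e.\ that the top relation deforms only by an element of $\ku\Gamma$ with no lower-order correction term of the kind appearing in Example \ref{exa:f5} --- which is exactly what the explicit, computer-assisted identity \eqref{eqn:rho-y12} secures.
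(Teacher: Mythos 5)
Your proof is correct and, in parts (a) and (b), essentially coincides with the paper's: both rest on the GAP-verified skew-primitivity of $x_{12}^{18}$ fed into Corollary \ref{cor:L-cociente}, followed by Proposition \ref{pro:deformations-liftings} (the paper invokes items (b) and (d), you invoke (b) and (c); both suffice here since the section restricts to an algebra map on $H=\ku\Gamma$). One small caveat: the identity actually needed for \eqref{eqn:primitivo2} is the left-coaction computation \eqref{eq:x12-primitive}, which the paper records as a separate GAP check from the right-coaction formula \eqref{eqn:rho-y12}; your phrase ``the computation underlying \eqref{eqn:rho-y12}'' elides this step, though it is the same machinery and recoverable from $\rho_3$ via \eqref{eqn:cleft-can-1} and \eqref{eqn:comultiplication L}. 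Part (c) is where you genuinely diverge. The paper works directly with the lifting map $\phi$: Lemma \ref{lema:liftings gral} and the character values $\chi_1^2\chi_2^2(g_1^2g_2^2)=\zeta^8$, $\chi_1^3\chi_2(g_1^3g_2)=-\zeta^6$ force $\phi$ to factor through $\mL(\lambda_1,\lambda_2)$ and then through $\mL(\lambda_1,\lambda_2,\lambda_3)$, and the resulting surjection is an isomorphism by the dimension count $\dim L=\dim\toba(V)|\Gamma|=\dim\mL(\lambda_1,\lambda_2,\lambda_3)$, the last equality being supplied by part (a). You instead package the argument through Theorem \ref{cor:generadors of ker phi}, checking that the Yetter-Drinfeld module spanned by $\Gc$ is a good module of relations; the isomorphism then comes from the associated-graded comparison in Proposition \ref{prop:shape of A} rather than from part (a). This is legitimate and arguably more systematic, exhibiting the example as an instance of the Section 4 machinery; the only points needing care are that the ordering in the definition of a good module of relations is by degree (so $x_1^{18}$, of degree $18$, sits between the degree-$4$ stratum and $x_{12}^{18}$, of degree $36$) rather than by stratification index --- harmless here because everything below the top stratum is primitive in $T(V)$ --- and that condition \eqref{eq:hypothesis generadors of ker phi} for $x_{12}^{18}$ is precisely its skew-primitivity modulo $\cI_{M'}$, which again reduces to \eqref{eq:x12-primitive}. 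Both routes ultimately rest on the same two computational pillars.
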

\pf
(a) We use \cite{GAP} as in \eqref{eqn:rho-y12} to see that
\begin{align}\label{eq:x12-primitive}
\gamma_3(x_{12}^{18})_{(-1)}\ot\gamma_3(x_{12}^{18})_{(0)} - g_{1}^{18}g_2^{18}\ot
\gamma_3(x_{12}^{18})=x_{12}^{18}\ot 1.
\end{align}
Then $x_{12}^{18}$ satisfies \eqref{eqn:primitivo2}. Hence, 
$L(\prova(\lambda_1,\lambda_2,\lambda_3),\prov)\simeq
\mL(\lambda_1,\lambda_2,\lambda_3)$, by Corollary
\ref{cor:L-cociente}.

(b) follows by Proposition \ref{pro:deformations-liftings} (b) and (d).

(c) Let $\phi:\mT(V)\to L$ be a lifting map. If $r\in\Gc_0\cup\Gc_1\cup\Gc_2$, then $r$ is
$(g(r),1)$-primitive for $g(r)\in
\Gamma$, hence $\phi(r)\in L_1$. Let $\chi_r\in\widehat{\Gamma}$ be the character
from
the $\Gamma$-action on $r$. Now, the pair $(\chi_r,g(r))$ is different from
$(\chi_i,g_i)$, $i=1,2$ and thus $\phi(r)\in\ku\Gamma$ by Lemma
\ref{lema:liftings gral} (b), see also \cite[Lemma 6.1]{AS3}.
Indeed, $\chi_1^{18}(g_1^{18})=\chi_2^{3}(g_2^{3})=1$ and we have already seen
that $\chi_1^2\chi_2^2(g_1^2g_2^2)=\zeta^8$, $\chi_1^3\chi_2(g_1^3g_2)=-\zeta^6$. Then
there
exist $\lambda_1,\lambda_2\in\ku$
such that $\phi$ factorizes through $\mL(\lambda_1,\lambda_2)$.
By equation \eqref{eq:x12-primitive} and Corollary \ref{cor:L-cociente}, $x_{12}^{18}$ is
$(g_1^{18}g_2^{18},1)$-primitive in $\mL(\lambda_1,\lambda_2)$. Also,
$\phi(x_{12}^{18})\in \ku\Gamma$ again by Lemma \ref{lema:liftings gral} (b). Hence, there
exists $\lambda_3\in\ku$ such that
$\phi$ factorizes
through $\mL(\lambda_1,\lambda_2,\lambda_3)$ and induces an isomorphism since both
algebras have dimension $\dim\toba(V)|\Gamma|$.
\epf

\subsection{A question}\label{sec:question}
Set $\prova_k\in\Cleft(\prov_k)$. To find
$\prova_{k+1}\in\Cleft(\prov_{k+1})$ we can
either apply Theorem \ref{thm:gunther-teo4} or Theorem \ref{thm:gunther-extendido}.
As said in Subsection \ref{subsec:strategy} both alternatives present a hard computational
obstacle, namely the computation of $X_k$ or the checking of $\langle \varphi
(Y_{k}^+)\rangle \neq \cA_{k}$.
Hence we need an {\it intermediate G\"unther's Theorem} exploiting the benefits of both
alternatives. That  said, we collect from the examples enough evidence to change
alternative (1b) by
\begin{itemize}
\item[(1c)]
Compute $Y_k$ and then $\Alg^{\prov_k}_{H}(Y_k,\prova_k)$.
\end{itemize}
Actually, in many examples we see that  not only
$\varphi\in\Alg^{\prov_k}_{H}(Y_k,\prova_k)$ induces a nonzero algebra $\cA_{k+1}$ but
also that any
$\cA_{k+1}\in\Cleft(\cH_{k+1})$, and hence any $\psi\in \Alg_{\prov_k}^{\cH_k}(X_k,\cA_k)$
is determined by
$\varphi=\psi_{|Y_k}\in \Alg^{\cH_k}(Y_k,\cA_k)$. Furthermore, as $\cH_k$ is
$Y_k$-faithfully flat, see Corollary \ref{cor:flat},
\begin{enumerate}
\item $X_k$ is the subalgebra generated by $Y_k\cdot \cH_k$, see
Remark \ref{rem:xk=nyk}.
\end{enumerate}

\begin{question}\label{q:1}
Is there a general setting in which any $\varphi\in\Alg^{\prov_k}_{H}(Y_k,\prova_k)$
extends to $\psi\in \Alg_{\prov_k}^{\cH_k}(X_k,\cA_k)$ with $\psi_{|Y_k}=\varphi$?
\end{question}

Assume that $H$ is finite-dimensional and semisimple. Then evidence of a positive answer
is given by (1) above and the fact that
\begin{enumerate}
\item[(2)] $\cA_k$ is an injective object in
$\mathcal{YD}^{\cH_k}_{H}$, see Lemma \ref{lem:Ainjective}.
\end{enumerate}
So, any $\varphi\in\Hom^{\prov_k}_{H}(Y_k,\prova_k)$
extends to $\psi\in \Hom_{\prov_k}^{\cH_k}(X_k,\cA_k)$ with $\psi_{|Y_k}=\varphi$.

As a last word, we recall that:
\begin{enumerate}
\item[(3)] There is an $H$-linear section $\gamma_k:\cH_k\to
\cA_k$ with ${\gamma_k}_{|H}=\id_{H}$.
\end{enumerate}

\end{document}